\numberwithin{equation}{section}
\numberwithin{equation}{section}
\newcommand{\e}{\varepsilon}
\newcommand{\R}{\mathbb R}
\newcommand{\T}{\mathbb{T}}
\newcommand{\dv}{{\rm{div}}}
\newcommand{\tenchi}{{}^t\!}
\newcommand{\wtts}{\overset{2,2}{\rightharpoonup}}
\newcommand{\vw}{\overset{2,2}{\underset{\rm vw}{\rightharpoonup}}}
\newcommand{\mx}{\tfrac{x}{\e}}
\newcommand{\mt}{\tfrac{t}{\e^r}}
\newcommand{\mnx}{\tfrac{x}{\e_n}}
\newcommand{\mnt}{\tfrac{t}{\e_n^r}}
\newtheorem{thm}{Theorem}[section]
\newtheorem{lem}[thm]{Lemma}
\newtheorem{rmk}[thm]{Remark}
\newtheorem{prop}[thm]{Proposition}
\newtheorem{defi}[thm]{Definition}
\newtheorem{corollary}[thm]{Corollary}
\title[Space-time quasi-periodic homogenization for  damped wave equations]{Space-time arithmetic quasi-periodic homogenization\\ for damped wave equations} 
\author{Tomoyuki Oka}
\address[Tomoyuki Oka]{Graduate School of Sciences, Tohoku University, Sendai 980-8579 Japan}
\email{tomoyuki.oka.q3@dc.tohoku.ac.jp}
\date{\today}
\keywords{quasi-periodic space-time homogenization, two-scale convergence, very weak two-scale convergence, damped wave equation, hyperbolic-parabolic equation}
\begin{document}
\subjclass[2010]{\emph{Primary}: 35B27; \emph{Secondary}: 80M40, 47J35} 
\maketitle

\begin{abstract}
This paper is concerned with space-time homogenization problems for damped wave equations with spatially periodic oscillating elliptic coefficients and temporally (arithmetic) quasi-periodic oscillating viscosity coefficients. Main results consist of a homogenization theorem, qualitative properties of homogenized matrices which appear in homogenized equations and a corrector result for gradients of solutions. In particular, homogenized equations and cell problems will turn out to deeply depend on the quasi-periodicity as well as the log ratio of spatial and temporal periods of the coefficients. Even types of equations will change depending on the log ratio and quasi-periodicity. Proofs of the main results are based on a (very weak) space-time two-scale convergence theory.
\end{abstract}

\section{Introduction and main results}

\emph{Space-time homogenization problems} for hyperbolic equations were first studied by Bensoussan, Lions and Papanicolaou. In \cite{BLP}, based on a method of \emph{asymptotic expansion}, the following wave equation is treated\/{\rm :}
\begin{equation}\label{wave}
\partial_{tt}^2u_{\e} -\dv\left(a_{\e}\nabla u_{\e} \right)=f \quad\text{ in } \Omega\times (0,T),
\end{equation} 
where $\Omega$ is a bounded domain in $\R^N$ with smooth boundary $\partial\Omega$, $N\ge1$, $T>0$, $f=f(x,t)$ is a given data, $a : \T^N \times \T\to \R^{N\times N}$ is an $N \times N$ symmetric matrix field satisfying a uniform ellipticity and $1$-periodicity and $a_{\e}:=a(\mx,\mt)$ for $r>0$ (i.e., $a_\e$ is $\e\times \e^r$-periodic). The homogenization problem concerns asymptotic behavior as $\e\to 0_+$ of (weak) solutions $u_\e=u_\e(x,t)$ as well as a rigorous derivation of limiting equations, often called \emph{homogenized equation}. In \cite{BLP}, it is assumed that (weak) solutions $u_\e = u_\e(x,t)$ can be expanded as a series\/{\rm :}
\begin{equation}\label{asym_exp}
u_\e(x,t) = \sum_{j = 0}^\infty \e^j u_j(x,t,\mx,\mt),
\end{equation}
where $u_j = u_j(x,t,y,s) : \Omega \times (0,T)\times \T^N\times \T \to \R$ for $j=0,1,2,\ldots$ are some periodic functions, and then, by substituting \eqref{asym_exp} to \eqref{wave}, at a formal level, $u_0=u_0(x,t)$ turns out to be independent of \emph{microscopic variable} $(y,s)$ and to solve the following homogenized equation\/:
\begin{equation*}\label{wave-hom}
\partial_{tt}^2u_{0} -\dv\left(a_{\rm hom}\nabla u_{0} \right)=f \quad\text{ in } \Omega\times (0,T),
\end{equation*} 
where $a_{\rm hom}$ is the so-called \emph{homogenized matrix} and represented as
\begin{equation}\label{hom_mat}
a_{\rm hom} e_k = \int_0^1\int_{\square} a(y,s)\bigl(\nabla_y \Phi_k(y,s)+e_k\bigl) \, dyds \quad \mbox{ for } \ k=1,2,\ldots,N.
\end{equation}
Here $\square := (0,1)^N$ is a unit cell, $\nabla_y$ stands for the gradient operator with respect to the third variable $y$, 
$\{e_k\} = \{[\delta_{jk}]_{j=1,2,\ldots,N}\}$ stands for a canonical basis of $\R^N$ and 
 $\Phi_k : \T^N\times \T \to \R$ (for $k = 1,2,\ldots,N$) is the \emph{corrector} which will be explained latter (see Remark \ref{corrector} below).
Moreover, $\Phi_k$ is determined by the so-called
\emph{cell problems}. 
In particular, if the log-ratio of the spatial and temporal periods of the coefficients is the hyperbolic scale ratio (i.e., $r=1$), then the cell problem is also a wave equation.
 \begin{equation*}
\partial_{ss}^2\Phi_k-\mathrm{div}_y \bigl[ a(y,s) (\nabla_y \Phi_k+ e_k ) \bigl] = 0 \quad \mbox{ in } \T^N\times \T
\end{equation*} 
(otherwise, cell problems are always elliptic equations, e.g.,~\eqref{CPslow} below).

In \cite{BLP}, the following heat equation is also treated\/{\rm :}
\begin{equation}\label{heat}
\partial_tu_{\e}-\dv\left(a_{\e}\nabla u_{\e} \right)=f \quad\text{ in } \Omega\times (0,T).
\end{equation}
By substituting \eqref{asym_exp} to \eqref{heat}, 
$u_0=u_0(x,t)$ is a (weak) solution to the following homogenized equation\/{\rm :}  
\begin{equation}\label{heat-hom}
\partial_tu_{0}-\dv\left(a_{\rm hom}\nabla u_{0} \right)=f \quad\text{ in } \Omega\times (0,T),
\end{equation}
where $a_{\rm hom}$ is defined by \eqref{hom_mat}. Furthermore, if $r=2$ (i.e., $a_\e=a(\mx,\tfrac{t}{\e^2})$), then the corrector $\Phi_k$ is the unique solution to the following cell problem\/{\rm :}
\begin{equation}\label{heat-CP}
\partial_{s}\Phi_k-\dv_y\bigl[a(y,s)(\nabla_y\Phi_k+e_{k})\bigl]=0\quad \text{ in }\ \T^N\times \T
\end{equation}
(as in \eqref{wave}, cell problems are always elliptic equations for any $r\neq 2$). Thus the type of the cell problem depends on the log-ratio of the spatial and temporal periods of the coefficients. Moreover, these formal arguments based on the asymptotic expansion for (the Cauchy-Dirichlet problem for) \eqref{heat} are justified via \emph{two-scale convergence theory} by A.~Holmbom in \cite{Ho}. The notion of two-scale convergence was first proposed by G.~Nguetseng~\cite{Ng}, and then, developed by G.~Allaire~\cite{Al1,Al2} (see also, e.g.,~\cite{LNW,Vi,Zh}). It enables us to analyze how strong compactness of bounded sequences in Sobolev spaces fails due to their oscillatory behaviors (see (i) and (ii) of Remark \ref{indepwtts} below). A.~Holmbom extended the two-scale convergence theory to space-time homogenization and derived \eqref{heat-hom} and \eqref{heat-CP} rigorously. Moreover, the notion of \emph{very weak two-scale convergence} is introduced, and then, it plays a crucial role for characterizing homogenized matrices (see Corollary \ref{veryweak} below for details). Besides, homogenization problems for various parabolic equations have been studied not only for linear ones but also for nonlinear ones (e.g.~\cite{AO,EP,FHOS,J,NW,W}). In particular, for p-Laplace type \cite{EP,W} and porous medium type \cite{AO}, it has been proved that cell problems are given as parabolic equations at the critical scale (i.e.,~$a_\e=a(\mx,\tfrac{t}{\e^2})$ in \eqref{heat}).

On the other hand, the following more general hyperbolic-parabolic equation is treated (e.g.~\cite{BFM,BL,CCMM, DT, FF,Mi,Ti2,To}). 
\begin{equation}\label{H-P}
h_{\e}\partial_{tt}^2u_{\e}-\dv(a_{\e}\nabla u_{\e})+g_{\e}\partial_{t}u_{\e}=f
\quad\text{ in } \Omega\times (0,T).
\end{equation}
Here $h_{\e}$ and $g_{\e}$ are $\e\times \e^r$-periodic functions rapidly oscillating. Furthermore, \cite{NNS,Nn,Ti1,WD} deal with nonlinear wave equations, and in particular, in \cite{NNS,Nn}, almost periodic settings are studied via $\Sigma$-convergence theory developed in \cite{Ng2}. Here \eqref{H-P} is called \emph{damped wave equations} for $h_\e\equiv1$ and $g_{\e}>0$ and it is noteworthy that asymptotic expansions of solutions to damped wave equations are performed with the aid of solutions to diffusion equations (e.g.~\eqref{heat}), and moreover, asymptotic behaviors of solutions to damped wave equations are similar to those of diffusion equations as $t\to +\infty$ (see e.g.~\cite{GR,N1}). Therefore, it is expected that cell problems for \eqref{H-P} will change at the critical scale for \eqref{heat} (i.e., $a_\e=a(\mx,\tfrac{t}{\e^2})$). However, at least to our knowledge, it does not seem to occur under the periodic homogenization in the fixed domain except for $h_\e=-\e^2$ and $g_\e\equiv 1$ (see \cite[Chapter 2, Section 4.5]{BLP} for details).

\subsection{Setting of the problem}
One of main purposes of the present paper is to find conditions under which the cell problems of \eqref{H-P} will be different from elliptic ones (see \eqref{CPslow} below). As a consequence, we emphasize that the (arithmetic) quasi-periodicity of the time-dependent coefficient $g_\e$ in \eqref{H-P} is crucial and it is defined as follows.
\begin{defi}[Quasi-periodic functions]\label{quasi}
The function $\varphi\in C(\R)$ is said to be \emph{(arithmetic) quasi-periodic} if it satisfies 
\begin{equation*}
\varphi(s+1)=\varphi(s)+C_{\ast}\quad \text{ for all $s\in [0,1)$ and \  $C_{\ast}\in \R$}
\end{equation*}
{\rm(}i.e.,~$\varphi$ is $(0,1)$-periodic if $C_{\ast}=0${\rm)}.
\end{defi}

\begin{rmk}
\rm
The notion of quasi-periodicity has been defined
in several different ways 
(see e.g.,~\cite{Co,Coo}). We stress that quasi-periodic functions in the sense of Definition \ref{quasi} do not satisfy the almost-periodicity in the sense of Besicovitch, which is known as a generalization of periodicity. Indeed, if $\varphi\in C(\R)$ is quasi-periodic, there exists a $(0,1)$-periodic function $\varphi_{\rm per}\in C_{\rm per}(\square)$
\footnote{Indeed, setting $\varPhi(s):= \varphi(s)-C_{\ast}s$, we see that $\varPhi(s)$ is $(0,1)$-periodic.}
such that
\begin{equation*}
\varphi(s)=\varphi_{\rm per}(s)+C_{\ast}s,
\end{equation*}
which implies that
$$
\left(\limsup_{R\to +\infty}\frac{1}{|2R|}\int_{-R}^{R}|\varphi(s)|^r\, ds\right)^{1/r}=+\infty,\quad \text{ for all }\ r\in [1,+\infty).
$$
Thus $\varphi$ does not belong to the generalized Besicovitch space $B^r(\R)$ (see e.g.,~\cite{CG, JKO}). Moreover, we shall consider both the effect of the periodic homogenization and the effect of the singular limit due to $\varphi(\mt)=\varphi_{\rm per}(\mt)+C_{\ast}\mt$ and $C_\ast\mt\to +\infty$ for $t>0$ as $\e\to 0_+$.
\end{rmk}

In this paper, we shall consider the Cauchy-Dirichlet problem for the following damped wave equation\/{\rm :} 
\begin{equation}\label{DW}
\displaystyle
\left\{
\begin{aligned}
&\partial_{tt}^2u_{\e}-\dv\bigl[ a\left(t,\tfrac{x}{\e}\right)\nabla u_{\e} \bigl]+g\left(\tfrac{t}{\e^r}\right)\partial_tu_{\e}=f_{\e} \quad\text{ in } \Omega\times (0,T), \\
&u_{\e}|_{\partial\Omega}=0,\quad 
u_{\e}|_{t=0}=v_\e^0,\quad
\partial_tu_{\e}|_{t=0}=v_\e^1.
\end{aligned}
\right.
\end{equation}
Here we make the following
\vspace{1mm}

\noindent
{\bf Assumption (A).}\
Let $\Omega$ be a bounded domain in $\R^N$ with smooth boundary $\partial\Omega$, $N\ge 1$.
\begin{itemize}
\item[(i)]
Let $T>0$, $\e>0$ and $r>0$. Let $v_\e^0\in H^1_0(\Omega)$ and $v_\e^1\in L^2(\Omega)$ be such that
\begin{align*}
v_\e^0\to v^0\quad \text{ weakly in } H^1_0(\Omega)\quad\text{ and }\quad 
v_\e^1\to v^1\quad \text{ weakly in } L^2(\Omega).
\end{align*}
Let $f_{\e}, f\in L^{2}(\Omega\times (0,T))$  be such that
$$
f_\e\to f\ \text{ weakly in }\ L^2(\Omega\times (0,T)).
$$
\item[(ii)]
The $N\times N$ symmetric matrix $a\in[C^1(0,T;L^{\infty}(\R^N))]^{N\times N}$ satisfies a uniform ellipticity, i.e.,~there exists $\lambda>0$ such that 
\begin{equation}
\label{ellip}
\lambda |\xi|^2\le a(t,y) \xi\cdot\xi\le |\xi|^2\ \text{ for any $\xi\in\R^N$ and a.e.~$(t,y)\in (0,T)\times \R^N$,}
\end{equation}
and $(0,1)^N$-periodicity\/{\rm :}
\begin{equation*}
a(t,y+e_j)=a(t,y)\quad \text{ a.e.~in $(t,y)\in (0,T)\times \R^N$}.
\end{equation*}
\item[(iii)]
Set $g\in C(\R;\R_+)$ as follows\/{\rm :} 
$$
g(s)=g_{\rm per}(s)+C_{\ast}s>0\  \text{ for all $s\in\R_+$}.
$$
Here $g_{\rm per}$ is a $(0,1)$-periodic function and $C_{\ast} \ge 0$ is a constant.
In addition, if $r=2$, we further assume $C_{\ast}\le \frac{2\lambda}{C_{\square}}$, where $C_{\square}=N/\pi^2$ is the best constant of the Poincar\'{e} inequality on the unit cell, that is,
$$
\|w\|_{L^2(\square)}\le C_{\square}\|\nabla w\|_{L^2(\square)}\quad \text{ for all }\ w\in H^1_{\rm per}(\square)
$$
 {\rm(}see Notation below{\rm)}.
\item[(iv)]
In addition, if $C_{\ast}\neq 0$ and $2<r<+\infty$, then
$a=a(y)$, $v_\e^0$, $v_\e^1$, $a(y)$, $g(s)$ and $f_\e$ are smooth, $(-\dv (a(\mx)\nabla v_\e^0))$, $(v_\e^1)$, 
$(f_\e)$ and $(\partial_t f_\e)$ are bounded in  $L^2(\Omega)$, $H^1_0(\Omega)$, $L^{\infty}(0,T;L^2(\Omega))$ and $L^2(\Omega\times (0,T))$, respectively. 
\end{itemize}

In this paper, we shall consider the convergence of solutions $(u_\e)$ to \eqref{DW} and the homogenized equation as $\e\to0_+$. We also discuss how the homogenized matrix can be represented for each $r>0$.

\subsection{Main results}
We start with the following definition of weak solutions to \eqref{DW}\/{\rm:}
\begin{defi}[Weak solution of \eqref{DW}]\label{sol}
A function $u_{\e}\in L^{\infty}(0,T;H^1_0(\Omega))$ is  said to be a weak solution to \eqref{DW}, if the following {\rm (i)-(iii)} are all satisfied\/{\rm:} 
\begin{itemize}
\item[(i)]{\rm(}Regularity{\rm)} $u_{\e}\in W^{2,2}(0,T;H^{-1}(\Omega))\cap W^{1,\infty}(0,T;L^2(\Omega))$.
\item[(ii)]{\rm(}Initial condition{\rm)} $u_{\e}(t)\to v^0_\e$ strongly in $L^{2}(\Omega)$ as $t\to0_+$ and $\partial_tu_{\e}(t)\to v^1_\e$ in $H^{-1}(\Omega)$ as $t\to0_+$.
\item[(iii)]{\rm(}Weak form{\rm)} It holds that, for all $\phi\in H^1_0(\Omega)$,
\begin{equation}\label{weakform}
\left\langle \partial_{tt}^2 u_{\e}(t),\phi\right\rangle_{H^1_0(\Omega)}
+A_{\e}^t(u_{\e}(t),\phi)
+\langle g(\tfrac{t}{\e^{r}})\partial_t u_{\e}(t),\phi\rangle_{H^1_0(\Omega)}	
=\langle f_{\e}(t),\phi\rangle_{H^1_0(\Omega)} 	
\end{equation}
for a.e.~in $t\in(0,T)$, where $A_{\e}^t(v,w)$ is a bilinear form in $H^1_0(\Omega)$ defined by
$$
A_{\e}^t(v,w) =\int_{\Omega}a\left(t,\tfrac{x}{\e}\right)\nabla v(x)\cdot \nabla w(x)\, dx\quad
\text{ for }\ v,w\in H^1_0(\Omega).
$$
\end{itemize}
\end{defi}

By Galerkin's method (cf.~{\cite[Theorem 12.2]{CD}}), we have
\begin{thm}[Existence and uniqueness of weak solutions to \eqref{DW}]\label{well-posedness}
Suppose that 
\begin{align*}
&a(t,\mx)\in [C^1(0,T;L^{\infty}(\Omega))]^{N\times N}_{\rm sym}, \
g(\mt)\in C(0,T),\
f_{\e}\in L^2(\Omega\times (0,T)),\\
&v^0_\e\in H^1_0(\Omega),\
v^1_\e\in L^2(\Omega) .
\end{align*}
Then for every $\e>0$ there exists a unique weak solution $u_{\e}$ to \eqref{DW}. 
\end{thm}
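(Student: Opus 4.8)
The plan is to run the classical Faedo--Galerkin scheme, as the reference to \cite[Theorem 12.2]{CD} suggests, taking care of the two features that set \eqref{DW} slightly apart from the textbook wave equation: the elliptic coefficient $a(t,\mx)$ depends on $t$, and the damping coefficient $g(\mt)$ must be carried along (for fixed $\e>0$ it is, by (iii) of Assumption~(A), a continuous and nonnegative function of $t$ on the compact interval $[0,T]$, hence bounded). First I would fix $\e>0$, let $(w_k)_{k\in\mathbb N}\subset H^1_0(\Omega)$ be the Dirichlet eigenfunctions of $-\Delta$ on $\Omega$, which are orthonormal in $L^2(\Omega)$ and orthogonal in $H^1_0(\Omega)$, and seek $u_\e^m(t)=\sum_{k=1}^{m}c_k^m(t)\,w_k$ solving, for $k=1,\dots,m$,
\begin{equation*}
\begin{split}
&\ddot c_k^m(t)+\sum_{j=1}^{m}A_\e^t(w_j,w_k)\,c_j^m(t)+g(\mt)\,\dot c_k^m(t)=(f_\e(t),w_k)_{L^2(\Omega)},\\
&\qquad c_k^m(0)=(v_\e^0,w_k)_{L^2(\Omega)},\qquad \dot c_k^m(0)=(v_\e^1,w_k)_{L^2(\Omega)}.
\end{split}
\end{equation*}
Since $t\mapsto A_\e^t(w_j,w_k)$ is continuous (indeed $C^1$) by (ii) of Assumption~(A), $t\mapsto g(\mt)$ is continuous by (iii), and $t\mapsto(f_\e(t),w_k)_{L^2(\Omega)}$ lies in $L^2(0,T)$, this is a linear second-order ODE system in normal form with continuous coefficients and $L^2$-in-time right-hand side, so Carath\'eodory's theorem yields a unique solution on the whole of $[0,T]$ (linearity excludes finite-time blow-up).

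Second, I would derive the basic energy estimate uniformly in $m$. Multiplying the $k$-th equation by $\dot c_k^m(t)$, summing over $k$, and using the symmetry of $a$ gives
\begin{equation*}
\begin{split}
&\frac12\frac{d}{dt}\left(\|\partial_t u_\e^m(t)\|_{L^2(\Omega)}^2+A_\e^t(u_\e^m(t),u_\e^m(t))\right)+\int_\Omega g(\mt)\,|\partial_t u_\e^m(t)|^2\,dx\\
&\qquad=\frac12\int_\Omega(\partial_t a)(t,\mx)\,\nabla u_\e^m(t)\cdot\nabla u_\e^m(t)\,dx+(f_\e(t),\partial_t u_\e^m(t))_{L^2(\Omega)}.
\end{split}
\end{equation*}
The integral containing $g$ is nonnegative and can be discarded; the term with $\partial_t a$ is bounded by $\tfrac12\|\partial_t a\|_{C([0,T];L^\infty(\Omega))}\|\nabla u_\e^m(t)\|_{L^2(\Omega)}^2$, which by the uniform ellipticity \eqref{ellip} is controlled by a constant (depending on $\lambda$ and $\|\partial_t a\|_{C([0,T];L^\infty(\Omega))}$) times $A_\e^t(u_\e^m(t),u_\e^m(t))$; the last term is estimated by $\tfrac12\|f_\e(t)\|_{L^2(\Omega)}^2+\tfrac12\|\partial_t u_\e^m(t)\|_{L^2(\Omega)}^2$. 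Using \eqref{ellip} once more to compare $A_\e^t(u_\e^m,u_\e^m)$ with $\|u_\e^m\|_{H^1_0(\Omega)}^2$, and noting that the $L^2(\Omega)$-projection onto $\mathrm{span}\{w_1,\dots,w_m\}$ is a contraction in $H^1_0(\Omega)$ and in $L^2(\Omega)$ (so that $\|u_\e^m(0)\|_{H^1_0(\Omega)}\le\|v_\e^0\|_{H^1_0(\Omega)}$ and $\|\partial_t u_\e^m(0)\|_{L^2(\Omega)}\le\|v_\e^1\|_{L^2(\Omega)}$), Gronwall's lemma provides bounds for $u_\e^m$ in $L^\infty(0,T;H^1_0(\Omega))$ and for $\partial_t u_\e^m$ in $L^\infty(0,T;L^2(\Omega))$ that are independent of $m$. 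Feeding these back into the equation bounds $\partial_{tt}^2 u_\e^m$ in $L^2(0,T;H^{-1}(\Omega))$.

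Third, I would pass to the limit $m\to\infty$. Along a subsequence, $u_\e^m\overset{\ast}{\rightharpoonup}u_\e$ in $L^\infty(0,T;H^1_0(\Omega))$, $\partial_t u_\e^m\overset{\ast}{\rightharpoonup}\partial_t u_\e$ in $L^\infty(0,T;L^2(\Omega))$ and $\partial_{tt}^2 u_\e^m\rightharpoonup\partial_{tt}^2 u_\e$ in $L^2(0,T;H^{-1}(\Omega))$. Since the problem is linear, testing the Galerkin equations against $\varphi(t)\,w_k$ with $\varphi\in C^1_c(0,T)$, passing to the limit, and invoking the density of $\bigcup_m\mathrm{span}\{w_1,\dots,w_m\}$ in $H^1_0(\Omega)$ shows that $u_\e$ satisfies the weak form \eqref{weakform} for a.e.\ $t\in(0,T)$; the regularity demanded in (i) of Definition~\ref{sol}, together with $u_\e\in L^\infty(0,T;H^1_0(\Omega))$, is exactly what the uniform bounds yield, and it implies $u_\e\in C([0,T];L^2(\Omega))$ and $\partial_t u_\e\in C([0,T];H^{-1}(\Omega))$, so that the initial traces in (ii) are well defined. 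These are identified in the standard way: testing the limit equation against $\varphi(t)\,w_k$ with $\varphi\in C^1([0,T])$, $\varphi(T)=0$, integrating by parts twice in $t$ and comparing with the analogous identity at the Galerkin level forces $u_\e(0)=v_\e^0$ and $\partial_t u_\e(0)=v_\e^1$.

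Finally, uniqueness. Let $w$ be the difference of two weak solutions with the same data, so that $w$ satisfies the homogeneous version of \eqref{weakform} with $w(0)=0$ and $\partial_t w(0)=0$. The only non-routine point --- and the step I expect to be the main obstacle --- is that the formal energy identity obtained by testing with $\partial_t w$ is not directly available, since a weak solution has $\partial_t w$ only in $L^\infty(0,T;L^2(\Omega))$ and not in $L^2(0,T;H^1_0(\Omega))$, so the pairing $\langle\partial_{tt}^2 w,\partial_t w\rangle$ is not a priori meaningful and the $t$-dependence of $a$ cannot be treated naively. I would handle this with the classical time-integration device (Ladyzhenskaya's trick), which is precisely the computation carried out in \cite[Theorem 12.2]{CD}: fix $s\in(0,T]$, test \eqref{weakform} with $v(\cdot,t):=-\int_t^{s}w(\cdot,\tau)\,d\tau$ for $t\le s$ --- which lies in $H^1_0(\Omega)$ for each $t$, is Lipschitz in $t$, and satisfies $\partial_t v=w$ and $v(\cdot,s)=0$ --- then integrate over $(0,s)$ and integrate by parts in $t$. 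The second-order term yields $-\tfrac12\|w(s)\|_{L^2(\Omega)}^2$; the elliptic term yields a nonnegative coercive quantity at time $s$ plus a remainder controlled by $\|\partial_t a\|_{C([0,T];L^\infty(\Omega))}$ and time-integrals of lower order; and the damping term, because $g\ge0$, contributes a nonnegative quantity up to a lower-order remainder. Collecting everything and applying Gronwall's inequality in the variable $s$ (first locally in $s$, then iterating over a finite partition of $[0,T]$) forces $w\equiv0$, i.e.\ uniqueness. Note that neither the smallness condition $C_\ast\le 2\lambda/C_\square$ imposed in (iii) when $r=2$ nor the extra smoothness in (iv) of Assumption~(A) enters here; they are only needed for the homogenization limit $\e\to0_+$.
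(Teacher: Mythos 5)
Your route is the same one the paper takes: the paper proves Theorem \ref{well-posedness} only by invoking the Faedo--Galerkin scheme of \cite[Theorem 12.2]{CD}, and your existence half (eigenfunction basis, Carath\'eodory for the linear ODE system, the energy estimate with the $\partial_t a$ term absorbed via \eqref{ellip} and Gronwall, the $L^2$-projection being a contraction on $H^1_0(\Omega)$ for this basis, passage to the limit and identification of the initial traces) is a correct implementation of exactly that scheme; discarding $\int_\Omega g(\mt)|\partial_t u_\e^m|^2\,dx$ is legitimate since $g>0$ by (A)(iii) (and even without a sign, boundedness of $g(\mt)$ on $[0,T]$ for fixed $\e$ would let Gronwall absorb it).

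The gap is in the uniqueness step, in the one line devoted to the damping term. Testing with $v(\cdot,t)=-\int_t^s w\,d\tau$ produces $\int_0^s\!\int_\Omega g(\mt)\,\partial_t w\,v\,dx\,dt$, and this is neither nonnegative nor a ``lower-order remainder'' as it stands. To turn it into one you must move the time derivative off $\partial_t w$, but then it falls on $g(\mt)\,v$, and $g$ is only continuous (the theorem assumes $g(\mt)\in C(0,T)$, and in (A)(iii) $g_{\rm per}$ is merely a continuous periodic function), so $\partial_t g$ need not exist. Estimating the term directly instead, say by $\beta_\e\|\partial_t w\|_{L^\infty(0,T;L^2(\Omega))}\int_0^s\|v(t)\|_{L^2(\Omega)}\,dt$, does not close the argument: $\|\partial_t w\|_{L^\infty(0,T;L^2(\Omega))}$ is not among the quantities controlled by the identity (which are $\|w(s)\|_{L^2(\Omega)}$ and the coercive elliptic quantity --- note this sits at time $0$, i.e.\ it involves $\nabla v(0)$, not time $s$ as you wrote), so one only gets an inequality of the type $\|w(s)\|_{L^2(\Omega)}^2\le C(w)\,s\int_0^s\|w(\tau)\|_{L^2(\Omega)}\,d\tau$, which does not force $w\equiv 0$. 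A standard repair within the stated hypotheses: set $G(t)=\int_0^t g(\tau\e^{-r})\,d\tau\in C^1([0,T])$ and test with $e^{G(t)}v(t)$ instead of $v(t)$. Then $\langle \partial_{tt}^2 w+g(\mt)\partial_t w, e^{G}v\rangle_{H^1_0(\Omega)}=\langle \partial_t(e^{G}\partial_t w),v\rangle_{H^1_0(\Omega)}$, and in the two integrations by parts the time derivative only ever hits $e^{G}$ (producing the bounded factor $g(\mt)e^{G}$), never $g$ itself; one obtains $\tfrac12 e^{G(s)}\|w(s)\|_{L^2(\Omega)}^2+\tfrac12 e^{G(0)}\int_\Omega a(0,\mx)\nabla v(0)\cdot\nabla v(0)\,dx$ bounded by $\tfrac12\int_0^s g(\mt) e^{G}\|w\|_{L^2(\Omega)}^2\,dt$ plus remainders involving $e^{G}\partial_t a$ and $-g(\mt)e^{G}a\le 0$, all Gronwall-compatible after the usual substitution $v(t)=V(t)-V(s)$ with $V(t)=\int_0^t w\,d\tau$. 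With that modification your proof goes through; alternatively one can smooth $g$ and pass to the limit in the identity, but the exponential weight is the cleaner fix.
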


Then we first obtain the following homogenization theorem\/{\rm :}

\begin{thm}[Homogenization theorem]\label{HPthm}
Suppose that {\bf (A)} is satisfied. Let $u_{\e}\in L^{\infty}(0,T;H^1_0(\Omega))$ be a unique weak solution to \eqref{DW}. There exist $u_0\in L^{\infty}(0,T;H^1_0(\Omega))$ and $h\in L^2_{\rm loc}((0,T];H^{-1}(\Omega))$ such that, for any $\sigma>0$,  
\begin{align}
u_{\e}&\to  u_0 &&\text{weakly-$\ast$ in } L^{\infty}(0,T;H^1_0(\Omega)),\label{HPconv1}\\
u_{\e}&\to u_0 &&\text{strongly in } C([0,T];L^2(\Omega)),\label{HPconv2}\\
g(\mt)\partial_tu_{\e}&\to \langle g_{\rm per}\rangle_s\partial_t 
u_0+C_{\ast} h \quad &&\text{weakly in } 
\begin{cases}
L^2(0,T;H^{-1}(\Omega))  \text{ if } C_{\ast}=0, \\
L^2(\sigma,T;H^{-1}(\Omega)) \text{ if } C_{\ast}\neq 0,\\
\end{cases}\label{HPconv4}\\
\partial_{tt}^2u_{\e}&\to\partial_{tt}^2u_{0}&&\text{weakly in } 
\begin{cases}
L^2(0,T;H^{-1}(\Omega))  \text{ if } C_{\ast}=0, \\
L^2(\sigma,T;H^{-1}(\Omega)) \text{ if } C_{\ast}\neq 0,\\
\end{cases}\label{HPconv5}\\
a(t,\mx)\nabla u_{\e}&\to  \langle a(t)(\nabla u_0+\nabla_y u_1) \rangle_y &&\text{weakly in } [L^2(\Omega\times (0,T))]^N,\label{HPconv3}
\end{align}
where $\langle w\rangle_{s}=\int_{0}^1w (s)\, ds$, $\langle \hat{w}\rangle_{y}=\int_{\square}\hat{w}(y)\, dy$ and $u_1$ is written by 
\begin{align}
 u_1  (x,t,y):   =\sum_{k=1}^N\partial_{x_k}u_0(x,t)\Phi_k(t,y).\label{HPu1}
\end{align}
Here $\Phi_k$ is a corrector for each $k=1,\ldots, N$ and it is characterized as follows\/{\rm :}
\begin{itemize}
\item[{\rm(i)}] In case $r\in(0, +\infty)\setminus\{2\}$, $\Phi_k\in H^1_{\mathrm{per}}(\T^N)/\R$ {\rm(}see Notation below{\rm)} is the unique solution to  
\begin{equation}\label{CPslow}
-\dv_y\left[a(t,y)(\nabla_y\Phi_k+e_{k})\right]=0\ \text{ in }\ \T^N\times (0,T),
\end{equation}
where $e_{k}$ is the $k$-th vector of the canonical basis of $\R^N$.
\item
[{\rm (ii)}] In case $r=2$, $\Phi_k\in L^{2}(0,T;H^1_{\mathrm{per}}(\T^N)/\R)$ is the unique solution to
\begin{equation}\label{CPcritical}
C_{\ast}t\partial_t\Phi_k-\dv_y\left[a(t,y)(\nabla_y\Phi_k+e_{k})\right]=0\ \text{ in }\ \T^N\times (0,T).
\end{equation}
In particular, if either $C_{\ast}= 0$ or $a=a(y)$, then $\Phi_k\in H^1_{\mathrm{per}}(\T^N)/\R$ is the unique solution to \eqref{CPslow}. 
\end{itemize}
Furthermore, for any $C_{\ast}\ge 0$, $u_0$ is the unique weak solution to 
\begin{equation}\label{HDW2}
\displaystyle
\left\{
\begin{aligned}
&\partial_{tt}^2u_{0}-\dv\left[ a_{\rm hom}(t)\nabla u_{0} \right]+ \langle g_{\rm per}\rangle_s\partial_t u_0+C_{\ast} h  =f \quad\text{ in } \Omega\times (0,T), \\
&u_{0}|_{\partial\Omega}=0 , \quad 	u_{0}|_{t=0}=v^0,\quad 
\partial_tu_{0}|_{t=0}= \tilde{v}^1. 
\end{aligned}
\right.
\end{equation} 
Here $u_0\equiv v^0$ whenever $C_{\ast}\neq 0$,  and moreover, 
\begin{equation*}\label{v1tilde}
\tilde{v}^1=
\begin{cases}
v^1 &\text{ if }\ C_{\ast}=0,\\
0 &\text{ if }\ C_{\ast}\neq 0.
\end{cases}
\end{equation*}
Moreover, $a_{\rm hom}(t)$ is the homogenized matrix given by
\begin{equation}\label{a_hom}
a_{\rm hom}(t)e_k=\int_{\square}a(t,y)\bigl(\nabla_y \Phi_k(t,y)+e_k\bigl)\, dy, \quad k=1,2,\ldots,N.
\end{equation}
\end{thm}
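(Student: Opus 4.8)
The plan is to combine the energy method with the (very weak) space-time two-scale convergence theory, treating the quasi-periodic part $C_\ast\mt$ of the damping coefficient as a singular perturbation carried along a periodic homogenization limit; the subsequential convergences obtained this way are upgraded to full limits at the end by uniqueness for the homogenized problem. \textbf{(Step 1: uniform estimates.)} Testing \eqref{weakform} with $\partial_tu_\e$ and using Assumption (A) — the ellipticity \eqref{ellip}, the $C^1$-in-$t$ regularity of $a$ (so that $\tfrac12\int_\Omega\partial_ta(t,\mx)\nabla u_\e\cdot\nabla u_\e$ is absorbed by Gronwall), and crucially $g\ge0$ — one bounds $(u_\e)$ in $L^\infty(0,T;H^1_0(\Omega))\cap W^{1,\infty}(0,T;L^2(\Omega))$ and controls $\int_0^T\!\!\int_\Omega g(\mt)|\partial_tu_\e|^2$. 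Since $g(\mt)\ge C_\ast\mt$ up to a bounded periodic term, this last bound gives $\|\sqrt t\,\partial_tu_\e\|_{L^2(\Omega\times(0,T))}^2\le C\e^r$ when $C_\ast\neq0$, hence $\partial_tu_\e\to0$ in $L^2(\sigma,T;L^2(\Omega))$ for every $\sigma>0$; this is precisely the mechanism forcing $\partial_tu_0\equiv0$, i.e.\ $u_0\equiv v^0$, and creating the time boundary layer at $t=0$. Rewriting \eqref{DW} as $g(\mt)\partial_tu_\e=f_\e+\dv[a(t,\mx)\nabla u_\e]-\partial_{tt}^2u_\e$ then bounds the damping term, and hence $\partial_{tt}^2u_\e$, in $L^2(0,T;H^{-1}(\Omega))$ if $C_\ast=0$ and in $L^2_{\rm loc}((0,T];H^{-1}(\Omega))$ if $C_\ast\neq0$; in the regime $C_\ast\neq0$, $2<r$ one differentiates the equation in $t$ and invokes Assumption (A)(iv) to get the faster decay of $\partial_tu_\e$ needed later.

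\textbf{(Step 2: compactness and the damping limit.)} Along a subsequence the estimates of Step 1 give \eqref{HPconv1}, \eqref{HPconv5} and \eqref{HPconv4} (the limit $\eta$ of $g(\mt)\partial_tu_\e$ being so far only named in $L^2(0,T;H^{-1})$, resp.\ $L^2_{\rm loc}$); the equi-H\"older estimate $\|u_\e(t)-u_\e(t')\|_{L^2}\le C|t-t'|^{1/2}$, the compact embedding $H^1_0(\Omega)\hookrightarrow\hookrightarrow L^2(\Omega)$ and Arzel\`a--Ascoli yield \eqref{HPconv2}, while the initial conditions pass to the limit because $v_\e^0\to v^0$ strongly in $L^2(\Omega)$ and, using also the bound on $\partial_{tt}^2u_\e$, $\partial_tu_\e\to\partial_tu_0$ in $C([0,T];H^{-1}(\Omega))$. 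To identify $\eta$, write $g_{\rm per}=\langle g_{\rm per}\rangle_s+g_0$ with $\langle g_0\rangle_s=0$, let $G_0$ be the $1$-periodic antiderivative of $g_0$, and use the identity $g_0(\mt)\partial_tu_\e=\partial_t[\e^rG_0(\mt)\partial_tu_\e]-\e^rG_0(\mt)\partial_{tt}^2u_\e\to0$ in $\mathcal D'$, which shows $g_{\rm per}(\mt)\partial_tu_\e\to\langle g_{\rm per}\rangle_s\partial_tu_0$; thus $h$ is \emph{defined} by \eqref{HPconv4}, and when $C_\ast\neq0$ one reads off a posteriori that $C_\ast h=f+\dv[a_{\rm hom}(t)\nabla v^0]$. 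Finally, by the spatial two-scale compactness theorem applied with $t$ as a parameter (the space-time extension of the two-scale theory), there exists $u_1$, lying in $L^2(\Omega\times(0,T);H^1_{\rm per}(\square)/\R)$ — resp.\ in $L^2(0,T;H^1_{\rm per})$ in the critical case — with $\nabla u_\e$ two-scale converging to $\nabla u_0+\nabla_yu_1$.

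\textbf{(Step 3: cell problems and the homogenized equation.)} To identify $u_1$, test \eqref{weakform} against $\e\psi(x,t)w(\mx)$ with $\psi\in C^\infty_c(\Omega\times(0,T))$, $w\in H^1_{\rm per}(\square)$: after two integrations by parts in $t$ the $\partial_{tt}^2$ term vanishes; the flux term two-scale converges to $\int_0^T\!\!\int_\Omega\!\!\int_\square a(t,y)(\nabla u_0+\nabla_yu_1)\cdot\nabla_yw\,\psi$; and the damping term contributes $\lim_{\e\to0_+}C_\ast\e^{1-r}\!\int_0^T\!\!\int_\Omega t\,\partial_tu_\e\,\psi w$, which vanishes for $r\in(0,+\infty)\setminus\{2\}$ (using $\partial_tu_0\equiv0$ and the decay rates of Step 1, including Assumption (A)(iv) when $r>2$) but equals $\int_0^T\!\!\int_\Omega\!\!\int_\square C_\ast t\,\partial_t\Phi_k\,\partial_{x_k}u_0\,\psi w$ at $r=2$. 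Varying $\psi$ and $w$ gives \eqref{CPslow} when $r\neq2$ and \eqref{CPcritical} when $r=2$, together with the representation \eqref{HPu1} of $u_1$; the reduction of \eqref{CPcritical} to \eqref{CPslow} when $a=a(y)$ or $C_\ast=0$ is then immediate. Taking instead $\phi\in H^1_0(\Omega)$ in \eqref{weakform} and letting $\e\to0_+$ using \eqref{HPconv5}, \eqref{HPconv4} and the weak $L^2$-limit $a(t,\mx)\nabla u_\e\to\langle a(t)(\nabla u_0+\nabla_yu_1)\rangle_y=a_{\rm hom}(t)\nabla u_0$ (which is \eqref{HPconv3}, obtained from strong two-scale convergence of $a(t,\mx)$ against the two-scale limit of $\nabla u_\e$, together with \eqref{a_hom}) produces \eqref{HDW2} with the stated initial data; uniqueness of weak solutions to \eqref{HDW2} — an energy estimate, with $h$ simply prescribed when $C_\ast\neq0$ — upgrades all convergences from subsequences to full limits.

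\textbf{(Main obstacle.)} The delicate point is the interplay, at the critical scale $r=2$, between the homogenization limit and the singular limit $C_\ast\mt\to+\infty$: one must show that the formally $O(\e^{-1})$ contribution of $C_\ast\mt\partial_tu_\e$ tested against $\e$-order oscillating functions is in fact finite and produces exactly the new term $C_\ast t\,\partial_t\Phi_k$, which forces the corrector to depend on the slow time and to live in $L^2(0,T;H^1_{\rm per})$ rather than in $H^1_{\rm per}$. This is also why well-posedness of \eqref{CPcritical} — solved as a parabolic problem in $t$ — requires the structural bound $C_\ast\le 2\lambda/C_\square$ of Assumption (A)(iii), which is exactly what makes the cell-problem energy identity close, whereas \eqref{CPslow} is just Lax--Milgram in $H^1_{\rm per}(\square)/\R$ by \eqref{ellip}. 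Secondary difficulties are the bookkeeping of the time boundary layer near $t=0$ when $C_\ast\neq0$ (whence the $L^2_{\rm loc}((0,T];\cdot)$ statements and the quantifier over $\sigma>0$) and, for $r>2$, extracting from Assumption (A)(iv) the rate of decay of $\partial_tu_\e$ that kills the damping contribution; the sharp identification of $a_{\rm hom}(t)$ ultimately rests on the very weak two-scale convergence machinery.
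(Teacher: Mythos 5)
Your proposal is correct and follows essentially the same route as the paper: energy estimates (including the bound $\|\sqrt{t}\,\partial_t u_\e\|_{L^2(\Omega\times(0,T))}^2\le C\e^r$ and the improved estimates under {\bf (A)}(iv) for $r>2$), two-scale compactness for gradients, $\e$-scaled spatially oscillating test functions with very weak two-scale convergence of $u_\e/\e$ producing the extra term $C_\ast t\,\partial_t\Phi_k$ exactly at $r=2$, uniqueness of the cell problems (using $C_\ast\le 2\lambda/C_\square$ at the critical scale) to identify $u_1$ and $a_{\rm hom}(t)$, and an upgrade from subsequences to the full sequence by uniqueness of the homogenized problem together with $u_0\equiv v^0$ and $C_\ast h=f+\dv[a_{\rm hom}(t)\nabla v^0]$ when $C_\ast\neq0$. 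Your minor local variations — the periodic-antiderivative trick for $g_{\rm per}(\tfrac{t}{\e^r})\partial_t u_\e$ in place of the paper's combination of strong $C(\overline{I};H^{-1})$ convergence with the mean-value property, killing the singular damping contribution for all $r<2$ directly from the $\e^{r/2}$ decay rate, and treating $t$ as a macroscopic parameter so the corrector never carries a fast time variable (the paper instead proves $s$-independence, resp.\ works with the $s$-average, of its space-time corrector) — are sound and only streamline the paper's bookkeeping.
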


\begin{rmk}
\rm
It is noteworthy that, due to the loss of the time periodicity, the following facts hold\/{\rm :} 
\begin{itemize}
\item[(i)] {\bf(Homogenized equation).\/}
The homogenized equation \eqref{HDW2} is of the same type as the original equation \eqref{DW} for the periodic case (i.e., $C_{\ast}= 0$). On the other hand, for the quasi-periodic case (i.e., $C_{\ast}\neq 0$), by the effect of the singular limit of $g$, \eqref{HDW2} is represented as the following elliptic equation\/{\rm :}
\begin{equation*}
-\dv (a_{\rm hom}\nabla u_{0}) =f-C_{\ast}h \ \text{ in }\ \Omega\times (0,T), \quad u_0\in H^1_0(\Omega).
\end{equation*}
Furthermore, the limit of the solution to \eqref{DW} coincides with the limit of the initial data $v_\e^0$.
\item[(ii)] {\bf(Cell problem).\/}
For the periodic case $C_{\ast}= 0$, the corrector $\Phi_k$ is always described as the solution to the elliptic equation \eqref{CPslow}. On the other hand, for the quasi-periodic case, at the critical case $r=2$, the cell problem \eqref{CPcritical} is different from \eqref{CPslow} and it is given as the parabolic equation by the effect of the singular limit of $g$.
Thus $\Phi_k$ depends on $t\in (0,T)$, and then, 
qualitative properties of the homogenized matrix $a_{\rm hom}$ will change due to \eqref{a_hom} (see Proposition \ref{property_of_a_hom} below).

\end{itemize}
\end{rmk}

Moreover, as for the homogenized matrix, we next have the following 
\begin{prop}[Qualitative properties of the homogenized matrix $a_{\rm hom}$]\label{property_of_a_hom}
Under the same assumption as in Theorem \ref{HPthm},
let $0<r<+\infty$ and $a_{\rm hom} (t)$ be the homogenized matrices defined by \eqref{a_hom}. 
Then the following {\rm (i)} and {\rm(ii)} hold\/{\rm:}
\begin{itemize}
\item[(i)]{\rm(}Uniform ellipticity{\rm)}
It holds that
\begin{align*}
&\lambda|\xi|^2+ \lambda\|\nabla_y\Phi_{\xi}(t)\|^2_{L^2(\square)}+\frac{C_{\ast}t}{2}\frac{d}{dt}\|\Phi_{\xi}(t)\|_{L^2(\square)}^2\\
&\le  
a_{\rm hom}(t)\xi\cdot \xi 
\le 
|\xi|^2+ \|\nabla_y\Phi_{\xi}(t)\|^2_{L^2(\square)}+\frac{C_{\ast}t}{2}\frac{d}{dt}\|\Phi_{\xi}(t)\|_{L^2(\square)}^2 
\end{align*}
for any $\xi\in \R^N$ and a.e. $t\in(0,T)$, where $\lambda > 0$ is the ellipticity constant of $a(t,y)$ defined by \eqref{ellip} and $\Phi_{\xi}$ is the corrector given by either \eqref{CPslow} or \eqref{CPcritical} with $e_k$ replaced by $\xi\in \R^N$.\vspace{3mm}
\item[(ii)]{\rm(}Symmetry and asymmetry{\rm)}
If $a(t,y)$ is the symmetric matrix, then $a_{\rm hom}(t)$ is the asymmetric matrix for $r=2$ and $C_{\ast}\neq 0$. Otherwise, $a_{\rm hom}(t)$ is also the symmetric matrix.  
\end{itemize}
\end{prop}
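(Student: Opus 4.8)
The plan is to obtain both assertions from energy identities produced by inserting the correctors themselves as test functions into the cell problems \eqref{CPslow}--\eqref{CPcritical}, and then invoking the ellipticity \eqref{ellip} together with the fact that the cell average of a periodic gradient vanishes. For part~(i) I would first exploit the linearity of the cell problem in its data: setting $\Phi_\xi:=\sum_{k=1}^N\xi_k\Phi_k$, this $\Phi_\xi$ solves \eqref{CPslow} (resp.\ \eqref{CPcritical}) with $e_k$ replaced by $\xi$, and from \eqref{a_hom} one has $a_{\rm hom}(t)\xi\cdot\xi=\int_{\square}a(t,y)(\nabla_y\Phi_\xi+\xi)\cdot\xi\,dy$, so it suffices to estimate this scalar quantity.

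For part~(i) itself I would write $\xi=(\nabla_y\Phi_\xi+\xi)-\nabla_y\Phi_\xi$, which gives
\[
a_{\rm hom}(t)\xi\cdot\xi=\int_{\square}a(\nabla_y\Phi_\xi+\xi)\cdot(\nabla_y\Phi_\xi+\xi)\,dy-\int_{\square}a(\nabla_y\Phi_\xi+\xi)\cdot\nabla_y\Phi_\xi\,dy .
\]
Testing the cell problem with $\Phi_\xi(t,\cdot)$ (admissible since $\Phi_\xi$ is periodic with zero mean) shows that the last integral is $0$ when $r\neq 2$, and equals $-C_\ast t\int_{\square}\partial_t\Phi_\xi\,\Phi_\xi\,dy=-\tfrac{C_\ast t}{2}\tfrac{d}{dt}\|\Phi_\xi(t)\|_{L^2(\square)}^2$ when $r=2$; here the identification of $\int_{\square}\partial_t\Phi_\xi\,\Phi_\xi\,dy$ with $\tfrac12\tfrac{d}{dt}\|\Phi_\xi(t)\|_{L^2(\square)}^2$ is justified by the well-posedness of \eqref{CPcritical} in $L^2(0,T;H^1_{\rm per}(\T^N)/\R)$ with $\partial_t\Phi_\xi$ in the dual. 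Next, \eqref{ellip} applied to the first integral yields $\lambda\int_{\square}|\nabla_y\Phi_\xi+\xi|^2\,dy\le\int_{\square}a(\nabla_y\Phi_\xi+\xi)\cdot(\nabla_y\Phi_\xi+\xi)\,dy\le\int_{\square}|\nabla_y\Phi_\xi+\xi|^2\,dy$, and expanding the square with $\int_{\square}\nabla_y\Phi_\xi\,dy=0$ and $|\square|=1$ gives $\int_{\square}|\nabla_y\Phi_\xi+\xi|^2\,dy=\|\nabla_y\Phi_\xi(t)\|_{L^2(\square)}^2+|\xi|^2$. Combining these identities produces the claimed two-sided bound (at non-critical scales the extra term $\tfrac{C_\ast t}{2}\tfrac{d}{dt}\|\Phi_\xi\|_{L^2(\square)}^2$ simply does not occur, since \eqref{CPslow} carries no time derivative).

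For part~(ii) I would run the analogous computation on the matrix entries: for $j,k\in\{1,\dots,N\}$,
\[
a_{\rm hom}(t)e_k\cdot e_j=\int_{\square}a(\nabla_y\Phi_k+e_k)\cdot(\nabla_y\Phi_j+e_j)\,dy-\int_{\square}a(\nabla_y\Phi_k+e_k)\cdot\nabla_y\Phi_j\,dy ,
\]
and test the cell problem solved by $\Phi_k$ with $\Phi_j(t,\cdot)$. In every case except $r=2$ with $C_\ast\neq 0$ and $a$ genuinely depending on $t$, the corrector solves \eqref{CPslow}, so the second integral vanishes and $a_{\rm hom}(t)e_k\cdot e_j=\int_{\square}a(\nabla_y\Phi_k+e_k)\cdot(\nabla_y\Phi_j+e_j)\,dy$, which is symmetric in $(j,k)$ when $a(t,\cdot)$ is symmetric; this settles the ``otherwise'' statement. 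In the remaining case the second integral equals $-C_\ast t\int_{\square}\partial_t\Phi_k\,\Phi_j\,dy$, whence, using the symmetry of $a$,
\[
a_{\rm hom}(t)e_k\cdot e_j-a_{\rm hom}(t)e_j\cdot e_k=C_\ast t\int_{\square}\bigl(\Phi_j\,\partial_t\Phi_k-\Phi_k\,\partial_t\Phi_j\bigr)\,dy ,
\]
and it remains to prove that this antisymmetric part is not identically zero, which I would do by exhibiting an explicit admissible coefficient $a(t,y)$---e.g.\ a non-diagonal, genuinely $t$-dependent matrix in dimension $N=2$---for which the Wronskian-type quantity on the right-hand side is nonzero for some $t\in(0,T)$.

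I expect the genuine difficulty to lie in this last point. The coupling term $C_\ast t\int_{\square}(\Phi_j\,\partial_t\Phi_k-\Phi_k\,\partial_t\Phi_j)\,dy$ vanishes for diagonal or $t$-independent coefficients, so the counterexample must be built with a real off-diagonal, time-varying structure while still meeting Assumption~(A), in particular the smallness condition $C_\ast\le 2\lambda/C_\square$ at $r=2$. Everything else---the energy identity in~(i) and the symmetric case in~(ii)---reduces to routine bilinear-form manipulations, the only care point being the justification, already used above, of $\int_{\square}\partial_t\Phi_\xi\,\Phi_\xi\,dy=\tfrac12\tfrac{d}{dt}\|\Phi_\xi(t)\|_{L^2(\square)}^2$ from the regularity theory for \eqref{CPcritical}.
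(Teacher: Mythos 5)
Your proof of (i) and your symmetry/asymmetry computation in (ii) follow exactly the paper's argument: test the cell problem with the corrector itself to identify $\int_{\square}a(t,y)(\nabla_y\Phi_\xi+\xi)\cdot\nabla_y\Phi_\xi\,dy$ with $-\tfrac{C_\ast t}{2}\tfrac{d}{dt}\|\Phi_\xi(t)\|_{L^2(\square)}^2$ at the critical scale (and with $0$ otherwise), then use \eqref{ellip} and the zero mean of $\nabla_y\Phi_\xi$, and for (ii) the same device yields the skew part $C_\ast t\int_{\square}\bigl(\Phi_j\,\partial_t\Phi_k-\Phi_k\,\partial_t\Phi_j\bigr)\,dy$. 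The one step you defer, verifying that this quantity is genuinely nonzero for some admissible $t$-dependent $a$, is not supplied in the paper either (it is simply asserted to be $\neq 0$ for $j\neq k$), so you are not missing any idea the paper provides, and carrying out your proposed explicit example would in fact sharpen the published argument.
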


\begin{rmk}
\rm
We stress that, in the critical case (i.e., $r=2$ and $C_{\ast}\neq 0$), even though the elliptic constant of $a(t, y)$ is independent of $t \in (0, T)$, that of $a_{\rm hom}(t)$ depends on $t$.
Furthermore, the symmetry breaking of $a_{\rm hom}(t)$ occurs 
but it makes no contribution to the divergence (see Remark \ref{skew} below). 
\end{rmk}

We finally get the following corrector result. 
\begin{thm}[Corrector result for time independent coefficients]\label{CR}
Suppose that {\bf(A)} is fulfilled and assume that 
$a=a(y)$, $v_\e^0$, $v_\e^1$, $a(y)$, $g(s)$ and $f_\e$ are smooth, $(-\dv (a(\mx)\nabla v_\e^0))$, $(v_\e^1)$, 
$(f_\e)$ and $(\partial_t f_\e)$ are bounded in  $L^2(\Omega)$, $H^1_0(\Omega)$, $L^{\infty}(0,T;L^2(\Omega))$ and $L^2(\Omega\times (0,T))$, respectively. 
Let $u_{\e}$ and $u_0$ be the unique solutions to \eqref{DW} and \eqref{HDW2}, respectively. Then it holds that 
\begin{equation}\label{errorest}
\displaystyle \lim_{\e\to 0_+}	\int_{0}^T\int_{\Omega} \left|\nabla u_{\e}(x,t)-\bigl(\nabla u_0(x,t)+\nabla_y u_1(x,t,\mx)\bigl)\right|^2\, dxdt=0 
\end{equation}
for all $r\in (0,+\infty)$, where $u_1=\sum_{k=1}^N\partial_{x_k}u_0\Phi_k$ and $\Phi_{k}\in L^2(0,T;H^1_{\rm per} (\T^N)/\R)$ is the corrector for $r\in(0,+\infty)$. 
\end{thm}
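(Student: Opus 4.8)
\emph{Reduction to an energy limit.} My plan is to reduce \eqref{errorest} to the convergence of a single energy integral and then identify that limit from the equation via the (very weak) two-scale machinery behind Theorem~\ref{HPthm}. By the uniform ellipticity \eqref{ellip} it is enough to prove
\[
I_\e:=\int_0^T\!\!\int_\Omega a\!\left(\mx\right)\bigl(\nabla u_\e-\nabla u_0-\nabla_yu_1(\cdot,\mx)\bigr)\cdot\bigl(\nabla u_\e-\nabla u_0-\nabla_yu_1(\cdot,\mx)\bigr)\,dx\,dt\longrightarrow 0 .
\]
Since $a=a(y)$ is smooth, elliptic regularity for the cell problem \eqref{CPslow} gives $\Phi_k\in W^{1,\infty}_{\rm per}(\T^N)$, so $\nabla_y\Phi_k(\mx)$ is an admissible two-scale test field; together with the regularity of $u_0$ provided by the strengthened hypotheses, $(\nabla u_0+\nabla_yu_1)(\cdot,\mx)=\sum_k\partial_{x_k}u_0\,(e_k+\nabla_y\Phi_k(\mx))$ converges \emph{strongly} in the two-scale sense in $[L^2(\Omega\times(0,T))]^N$. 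By Theorem~\ref{HPthm} (and its proof) $\nabla u_\e$ two-scale converges to $\nabla u_0+\nabla_yu_1$, hence so does $a(\mx)\nabla u_\e$ to $a(y)(\nabla u_0+\nabla_yu_1)$. Expanding $I_\e$ and using these facts together with the identities $\langle a(\nabla_y\Phi_k+e_k)\cdot\nabla_y\Phi_j\rangle_y=0$ and $\langle a(\nabla_y\Phi_k+e_k)\rangle_y=a_{\rm hom}e_k$ following from \eqref{CPslow} and \eqref{a_hom}, both the mixed and the purely oscillating terms converge to $L:=\int_0^T\int_\Omega a_{\rm hom}\nabla u_0\cdot\nabla u_0\,dx\,dt$; thus $I_\e=A_\e-L+o(1)$ with $A_\e:=\int_0^T\int_\Omega a(\mx)\nabla u_\e\cdot\nabla u_\e$, and since $I_\e\ge 0$ everything reduces to proving $\limsup_{\e\to 0_+}A_\e\le L$.

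\emph{The $\limsup$-bound.} To obtain it I would test \eqref{weakform} with $\phi=u_\e(t)$ and integrate over $(0,T)$, which yields
\[
A_\e=\int_0^T\langle f_\e,u_\e\rangle\,dt-\int_0^T\langle\partial_{tt}^2u_\e,u_\e\rangle\,dt-\int_0^T\langle g(\mt)\partial_tu_\e,u_\e\rangle\,dt .
\]
The first term tends to $\int_0^T\langle f,u_0\rangle$ by $f_\e\rightharpoonup f$ and \eqref{HPconv2}. For the inertia term, write $u_\e=u_0+(u_\e-u_0)$, pass to the limit in the $u_0$-piece by \eqref{HPconv5}, and integrate by parts in time in the remainder; the boundary contributions vanish because $\partial_tu_\e$ is bounded in $L^\infty(0,T;L^2(\Omega))$, $u_\e\to u_0$ in $C([0,T];L^2(\Omega))$, and $v_\e^0\to v^0$, $v_\e^1\to v^1$ strongly in $L^2(\Omega)$ by Rellich, leaving $\int_0^T\|\partial_tu_\e\|_{L^2}^2$. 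Comparing with \eqref{HDW2} tested by $u_0$ and using \eqref{a_hom}, one sees that $\limsup A_\e\le L$ follows as soon as (a) $\partial_tu_\e\to\partial_tu_0$ strongly in $L^2(\Omega\times(0,T))$, and (b) $\int_0^T\langle g(\mt)\partial_tu_\e,u_\e\rangle\,dt\to\int_0^T\langle\langle g_{\rm per}\rangle_s\partial_tu_0+C_\ast h,u_0\rangle\,dt$.

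\emph{The two missing convergences.} When $C_\ast=0$ the coefficient $g(\mt)$ is bounded, and (a),(b) are obtained by the classical energy method: the $L^2(\Omega)$-bound on $(-\dv(a(\mx)\nabla v_\e^0))$ lets one apply the elliptic corrector to $v_\e^0$, so $\int_\Omega a(\mx)\nabla v_\e^0\cdot\nabla v_\e^0\to\int_\Omega a_{\rm hom}\nabla v^0\cdot\nabla v^0$ and the initial energies of \eqref{DW} converge to that of \eqref{HDW2}; combined with the dissipation identity, weak lower semicontinuity, and \eqref{HPconv2},\eqref{HPconv4},\eqref{HPconv5}, this gives convergence of the full energies, hence (a), and (b) then follows from (a) and $g(\mt)\rightharpoonup\langle g_{\rm per}\rangle_s$ weakly-$\ast$ in $L^\infty(0,T)$. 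When $C_\ast\ne 0$ the damping is singular, $g(\mt)\gtrsim C_\ast t\,\e^{-r}$; the basic estimate $\int_0^T\int_\Omega g(\mt)|\partial_tu_\e|^2\le C$ then forces $\partial_tu_\e\to 0$ strongly in $L^2(\sigma,T;L^2(\Omega))$ for every $\sigma>0$ (and, using the extra regularity, $\partial_tu_\e(T)\rightharpoonup 0$ in $L^2(\Omega)$), while $u_0\equiv v^0$; thus (a) holds once one discards an initial-time layer $(0,\sigma)$, whose contribution to $I_\e$ is $O(\sigma)$ uniformly in $\e$ by the $L^\infty(0,T;H^1_0)$-bound. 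For (b) I would rewrite the singular term using the equation, $g(\mt)\partial_tu_\e=f_\e+\dv(a(\mx)\nabla u_\e)-\partial_{tt}^2u_\e$, which converts it into quantities governed by \eqref{HPconv3},\eqref{HPconv5}, by the elliptic corrector for $v_\e^0$, and by the divergence-free (``div--curl'') structure of $a(\mx)(e_k+\nabla_y\Phi_k(\mx))$, and which produces $C_\ast\int_0^T\langle h,v^0\rangle$ together with the $\langle v^1,v^0\rangle$ arising from the instantaneous dissipation of the initial velocity.

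\emph{Main obstacle.} I expect the delicate point to be precisely (b) in the quasi-periodic regime $C_\ast\ne 0$ — equivalently, the strong velocity convergence together with the identification of the memory term $C_\ast h$ — on the \emph{fixed} interval $(0,T)$, including the behaviour near $t=0$: the elementary rewritings of $\int_0^T\langle g(\mt)\partial_tu_\e,u_\e\rangle$ (integration by parts in $t$, or splitting off $v_\e^0$) produce individually divergent pieces of order $\e^{-r/2}$, and capturing their cancellation appears to require the full singular-limit analysis behind Theorem~\ref{HPthm} (the construction of $h$ and the control of the initial boundary layer) together with the a~priori bounds of Assumption~{\bf(A)}(iv). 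Once $A_\e\to L$ is secured, $I_\e=A_\e-L+o(1)\to 0$, and \eqref{errorest} follows from the ellipticity \eqref{ellip}.
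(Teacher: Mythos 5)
Your reduction (ellipticity, expansion into $I_1^\e-2I_2^\e+I_3^\e$, two-scale/mean-value limits of the mixed and purely oscillating terms, and identification of everything with $\int_0^T\int_\Omega a_{\rm hom}\nabla u_0\cdot\nabla u_0\,dxdt$ via the cell problem) is exactly the paper's skeleton, and your starting identity for the energy term — testing \eqref{weakform} by $u_\e$ — is also the paper's. The genuine gap is in the step you yourself flag as the ``main obstacle'': after splitting $\int_0^T\langle \partial_{tt}^2u_\e,u_\e\rangle\,dt$ and $\int_0^T\langle g(\mt)\partial_t u_\e,u_\e\rangle\,dt$, you need (a) strong $L^2$ convergence of $\partial_t u_\e$ and (b) the limit of the singular damping pairing, and neither is established: for $C_\ast\neq 0$ the two pieces are individually of order $\e^{-r/2}$ near $t=0$ ($\partial_{tt}^2u_\e$ alone is only controlled in $L^2(I_\sigma;H^{-1})$, and $g(\mt)\partial_t u_\e$ alone has no usable uniform bound up to $t=0$), so the cancellation you would need is precisely what your argument cannot capture; even for $C_\ast=0$, the strong velocity convergence is only sketched through an energy-convergence argument that itself requires passing to the limit in $\int_0^t\int f_\e\,\partial_t u_\e$ and in the dissipation integral, which you do not carry out.

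The paper avoids this entirely by \emph{not} splitting the two terms. This is where the extra smoothness/boundedness hypotheses of Theorem \ref{CR} enter: testing the equation by $-\dv\bigl(a(\mx)\nabla\partial_t u_\e\bigr)$ (Lemma \ref{bdd2}) gives that the \emph{combination} $\partial_{tt}^2u_\e+g(\mt)\partial_t u_\e$ is bounded in $L^{\infty}(I;L^2(\Omega))$, while \eqref{conv9} identifies its weak limit $w=\partial_{tt}^2u_0+\langle g_{\rm per}\rangle_s\partial_t u_0+C_\ast h$. Since $u_\e\to u_0$ strongly in $C(\overline I;L^2(\Omega))$ by \eqref{conv4}, the product $\int_0^T\int_\Omega\bigl(\partial_{tt}^2u_\e+g(\mt)\partial_t u_\e\bigr)u_\e\,dxdt$ passes to the limit as an $L^2$--$L^2$ pairing, with no boundary-layer analysis, no strong convergence of $\partial_t u_\e$, and no treatment of $h$ beyond its existence; the homogenized equation \eqref{HDW2} then converts $\int_0^T\int_\Omega(f-w)u_0\,dxdt$ into $\int_0^T\int_\Omega a_{\rm hom}\nabla u_0\cdot\nabla u_0\,dxdt$, i.e.\ into your $L$ via \eqref{a_hom} and \eqref{CPslow}. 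So the missing idea in your proposal is exactly this: keep $\partial_{tt}^2u_\e+g(\mt)\partial_t u_\e$ as a single quantity with the improved $L^\infty(I;L^2)$ bound supplied by the smooth-data assumptions, rather than decomposing it and trying to control the singular damping and the inertia separately.
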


As for the time dependent case $a=a(t,y)$, we have the following corrector result for more specific settings\/{\rm :}
\begin{corollary}[Corrector result for time dependent coefficients]\label{CR2}
Suppose that $C_{\ast}\neq 0$. In addition, assume that $a(t,y)$ is smooth and the following \eqref{ellip2}-\eqref{f-add} hold\/{\rm :}
\begin{align}
&\partial_t a(t,y)\xi\cdot\xi \le 0\ 
\quad \text{ for all $\xi\in \R^N$ and all $(t,y)\in (0,T)\times \R^N$}, \label{ellip2} \\
&-\dv(a(0,\mx)\nabla v_\e^0)\to -\dv(a_{\rm hom}(0)\nabla v^0) \text{ strongly in $H^{-1}(\Omega)$}, \label{strong hinv}\\
&\lim_{\e\to 0_+}\|v_\e^1\|_{L^2(\Omega)}=0, \label{initialv-add}\\
&f_\e\to f \text{ strongly in } L^2(\Omega\times (0,T))
\text{ or } (f_\e/\sqrt{t})  \text{ is bounded in } L^2(\Omega\times (0,T)),
\label{f-add}
\end{align} 
In addition, if $r=2$ and $C_\ast \neq 0$, assume that
\begin{equation}
\partial_t a(t,y)=-a(t,y) \quad \text{ for all $(t,y)\in (0,T)\times \R^N$}. \label{ellip3}
\end{equation}
Here $a_{\rm hom}(t)$ is the homogenized matrix defined by \eqref{a_hom}.
Let $u_{\e}$ and $u_0$ be the unique solutions to \eqref{DW} and \eqref{HDW2}, respectively. Then \eqref{errorest} holds. 
\end{corollary}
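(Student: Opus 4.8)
The plan is to prove \eqref{errorest} by an energy estimate on the difference between $u_\e$ and an oscillating approximation, exploiting that, since $C_{\ast}\neq 0$, Theorem \ref{HPthm} gives $u_0\equiv v^0$ (independent of $t$), and that the singular damping $g(\mt)\sim C_{\ast}t/\e^r\to\infty$ forces $\partial_t u_\e\to 0$. First I would record that the cell corrector satisfies $\dv_y[a(t,y)(\nabla_y\Phi_k+e_k)]=0$ on $\T^N\times(0,T)$ in all cases at hand: directly from \eqref{CPslow} when $r\neq 2$; from \eqref{ellip3} (which yields $a(t,y)=e^{-t}a(0,y)$, so that the $t$-independent solution of \eqref{CPslow} built from $a(0,\cdot)$ solves \eqref{CPcritical}) when $r=2$; and from $a=a(y)$ in part (iv) of Assumption \textbf{(A)} when $r>2$ — with $\partial_t\Phi_k\equiv 0$ in the last two cases. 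Since $a$ is smooth, $\Phi_k$ is smooth. Set $u_1(x,t,y)=\sum_{k}\partial_{x_k}v^0(x)\Phi_k(t,y)$, pick a cut-off $\theta_\e\in C_c^\infty(\Omega;[0,1])$ equal to $1$ off an $\e$-collar of $\partial\Omega$ with $|\nabla\theta_\e|\lesssim\e^{-1}$, and put $\tilde u_\e:=v^0+\e\,\theta_\e\,u_1(x,t,\mx)$, $\rho_\e:=u_\e-\tilde u_\e$. Computing $\nabla\tilde u_\e$ one finds $\nabla u_\e-\nabla v^0-(\nabla_y u_1)(x,t,\mx)=\nabla\rho_\e+z_\e$ with $z_\e\to 0$ in $L^2(\Omega\times(0,T))$ (the collar terms vanish because the collar has measure $O(\e)$ and $\nabla v^0\in L^2$; here and below I use that the hypotheses — smoothness of $a$, \eqref{strong hinv}, \eqref{f-add}, and (iv) of \textbf{(A)} when $r>2$ — provide enough regularity, in particular $v^0\in H^2$). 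Hence it suffices to show $\|\nabla\rho_\e\|_{L^2(\Omega\times(0,T))}\to 0$.

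Next I would write the equation $\partial_{tt}^2\rho_\e-\dv(a(t,\mx)\nabla\rho_\e)+g(\mt)\partial_t\rho_\e=F_\e$ with $F_\e:=f_\e-\big[\partial_{tt}^2\tilde u_\e-\dv(a(t,\mx)\nabla\tilde u_\e)+g(\mt)\partial_t\tilde u_\e\big]$ and $\rho_\e^0=v_\e^0-v^0-\e\theta_\e u_1(x,0,\mx)$, $\rho_\e^1=v_\e^1-\e\theta_\e(\partial_t u_1)(x,0,\mx)$. Expanding $\dv(a(t,\mx)\nabla\tilde u_\e)$ and using $\dv_y[a(t,y)(\nabla_y\Phi_k+e_k)]=0$, the $O(\e^{-1})$ contributions cancel identically, leaving $F_\e=\hat F_\e+r_\e$, where $\hat F_\e$ is bounded in $L^2(\Omega\times(0,T))$ (it gathers $f_\e$, the oscillating divergence $\sum_k\nabla_x\partial_{x_k}v^0\cdot a(t,\mx)(e_k+\nabla_y\Phi_k(t,\mx))$, and terms that are $O(\e)$ or supported on the collar), while $r_\e=-C_{\ast}t\,\e^{1-r}\theta_\e(\partial_t u_1)(x,t,\mx)$ is the only term that may fail to be bounded in $L^2$; it is present only when $1\le r<2$ and $a$ genuinely depends on $t$, since otherwise $\partial_t\Phi_k\equiv 0$ (and then also $\partial_{tt}^2\tilde u_\e=O(\e)$, which is why \eqref{ellip3} and (iv) of \textbf{(A)} are imposed for $r\ge 2$; for $r>2$ the setting actually coincides with time-independent coefficients).

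Now the energy step: testing with $\partial_t\rho_\e$, integrating in $t$, and using \eqref{ellip2} (so $-\tfrac12\int\partial_t a(t,\mx)|\nabla\rho_\e|^2\ge 0$) and $g>0$ gives, for a.e.\ $\tau\in(0,T)$,
\begin{equation*}
\tfrac\lambda2\|\nabla\rho_\e(\tau)\|_{L^2}^2\ \le\ \tfrac12\|\rho_\e^1\|_{L^2}^2+\tfrac12\int_\Omega a(0,\mx)|\nabla\rho_\e^0|^2\,dx+\int_0^\tau\langle F_\e(s),\partial_s\rho_\e(s)\rangle\,ds .
\end{equation*}
The first term tends to $0$ by \eqref{initialv-add}; the second is (up to collar terms) the corrector defect $\|\nabla v_\e^0-\nabla v^0-(\nabla_y u_1)(x,0,\mx)\|_{L^2}^2$, which vanishes by \eqref{strong hinv} and the classical corrector result for the elliptic problem $-\dv(a(0,\mx)\nabla\,\cdot\,)$. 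For the third term the crucial input is that $g(\tfrac{t}{\e^r})\ge\tfrac{C_{\ast}}{2}\tfrac{t}{\e^r}$ once $t\gtrsim\e^r$, so the a priori bound $\int_0^T\!\int_\Omega g(\tfrac{s}{\e^r})|\partial_s u_\e|^2\le C$ (from the energy identity for $u_\e$, using \eqref{ellip2} and \eqref{f-add}) yields both $\partial_s u_\e\to 0$ in $L^2(\Omega\times(0,T))$ and the weighted bound $\int_0^T s\,\|\partial_s u_\e(s)\|_{L^2}^2\,ds=O(\e^r)$. Since $\partial_s\rho_\e=\partial_s u_\e+O_{L^2}(\e)\to 0$ in $L^2$, one gets $\int_0^\tau\langle\hat F_\e,\partial_s\rho_\e\rangle\le\|\hat F_\e\|_{L^2}\|\partial_s\rho_\e\|_{L^2}\to 0$; and for $r_\e$, Cauchy--Schwarz with the weight $s$ gives $|\int_0^\tau\langle r_\e,\partial_s\rho_\e\rangle|\lesssim \e^{1-r}\big(\int_0^T s\,ds\big)^{1/2}\big(\int_0^T s\|\partial_s\rho_\e(s)\|_{L^2}^2\,ds\big)^{1/2}=O(\e^{1-r/2})\to 0$ because $r<2$. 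Thus $\sup_\tau\|\nabla\rho_\e(\tau)\|_{L^2}\to 0$, whence $\|\nabla\rho_\e\|_{L^2(\Omega\times(0,T))}\to 0$ and \eqref{errorest}.

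The hardest point — and the reason for the restrictive hypotheses — is controlling $g(\mt)\partial_t\tilde u_\e$. When $1\le r<2$ the cell problem carries no time derivative, so $\Phi_k(t,\cdot)$ genuinely depends on $t$ and this term has size $\e^{1-r}$ in $L^2$, beyond the reach of a naive estimate; the resolution is that the very singularity $t/\e^r$ of the damping that must be overcome supplies the weighted estimate $\int_0^T s\,\|\partial_s u_\e\|_{L^2}^2\,ds=O(\e^r)$, whose weight $s$ exactly absorbs the $\e^{1-r}$ growth for $r<2$. For $r=2$, hypothesis \eqref{ellip3} forces the corrector to be $t$-independent (hence smooth, so $\partial_{tt}^2\Phi_k=0$ and $g(\mt)\partial_t\tilde u_\e=0$), curing the defect that the parabolic cell corrector of \eqref{CPcritical} would in general lie only in $L^2(0,T;H^1_{\mathrm{per}}(\T^N)/\R)$ with $\partial_{tt}^2\Phi_k$ not square-integrable near $t=0$; for $r>2$, part (iv) of Assumption \textbf{(A)} plays the same role through $a=a(y)$.
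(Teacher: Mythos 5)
Your route is genuinely different from the paper's: you build the first--order ansatz $\tilde u_\e=v^0+\e\,\theta_\e u_1(x,t,\mx)$ and run a Gronwall/energy estimate for the residual equation of $\rho_\e=u_\e-\tilde u_\e$, whereas the paper never forms a residual: it expands the quadratic form $\int\!\!\int a_\e|\nabla u_\e-(\nabla u_0+\nabla_y u_1)|^2$ and reduces everything to the key lemma $\limsup_{\e\to0_+}\int\!\!\int a_\e\nabla u_\e\cdot\nabla u_\e\le\int\!\!\int\!\!\int a(\nabla u_0+\nabla_y u_1)\cdot(\nabla u_0+\nabla_y u_1)$, proved via the energy functional $E^\e$, Ascoli--Arzel\`a, \eqref{ellip2} for the lower semicontinuity of the $-\partial_t a$ term, and the $J_1^\eta,J_2^\eta,J_3^\eta$ computation using \eqref{CPcritical} and \eqref{ellip3}. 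Two of your observations are sound and even illuminating: under \eqref{ellip3} one has $a(t,y)=e^{-t}a(0,y)$, so the $t$-independent elliptic corrector solves \eqref{CPcritical} and, by the uniqueness the paper establishes, \emph{is} the corrector for $r=2$ (so $\partial_t u_1\equiv0$ there); and the weighted-in-time absorption of the $\e^{1-r}$ damping residual via $\int_0^T t\|\partial_t u_\e(t)\|_{L^2(\Omega)}^2\,dt\le C\e^r$ (Lemma \ref{bdd}(iii)) is correct for $r<2$. Nevertheless the proof does not close under the hypotheses of the corollary.

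The gap is the regularity you assume to make the residual meaningful. You assert $v^0\in H^2(\Omega)$, but for $0<r\le 2$ the corollary only provides weak convergence of $(v_\e^0,v_\e^1)$ in $H^1_0\times L^2$, \eqref{strong hinv} (which is strong convergence in $H^{-1}(\Omega)$ only), \eqref{initialv-add} and \eqref{f-add}; none of these gives $-\dv(a_{\rm hom}(0)\nabla v^0)\in L^2(\Omega)$, hence no $H^2$ regularity of $v^0$. Without it, already your $z_\e$ (which contains $\e\theta_\e\nabla_x u_1$, i.e. second derivatives of $v^0$) is uncontrolled, and, more seriously, the claimed splitting $F_\e=\hat F_\e+r_\e$ with $\hat F_\e$ bounded in $L^2(\Omega\times(0,T))$ fails: the term $\sum_k a(t,\mx)(e_k+\nabla_y\Phi_k(t,\mx))\cdot\nabla\partial_{x_k}v^0$ is not in $L^2$, and the cutoff generates collar terms (from $\nabla\theta_\e$, $\nabla^2\theta_\e$ and the loss of the $\e^{-1}$ cell-problem cancellation where $\theta_\e\neq1$) whose $L^2(\Omega\times(0,T))$ size is of order $\e^{-1/2}$, not $O(1)$. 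You cannot instead keep these pieces in divergence form and pair them with $\partial_t\rho_\e$ in the $H^{-1}$--$H^1_0$ duality, because under the corollary's hypotheses for $r\le2$ one only has $\partial_t u_\e$ bounded in $L^{\infty}(I;L^2(\Omega))$ (the improved bounds of Lemma \ref{bdd2} require $a=a(y)$ and smooth data, which are not assumed here), and the decay your scheme supplies, $\|\partial_t\rho_\e\|_{L^2(\Omega\times I)}=O(\e^{r/4})$, cannot absorb an $\e^{-1/2}$-sized residual when $r\le2$. This is exactly the obstruction the paper's energy-convergence argument is designed to avoid, since it needs only $\nabla u_0\in L^2$ and smooth correctors; to rescue your approach you would have to add $H^2$-type hypotheses on $v^0$ and treat the boundary layer, or revert to the quadratic-form expansion, i.e. essentially the paper's proof.
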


\begin{rmk}	
\rm
Initial data $v_\e^0\in H^1_0(\Omega)$ satisfying \eqref{strong hinv} can actually be constructed (see e.g.~\cite[pp.~236]{CD}).
\end{rmk}

\begin{rmk}\label{corrector}
\rm
From Theorem \ref{CR}, it holds that 
\begin{equation*}
u_{\e}\not\to u_0 \quad \text{ strongly in }L^2(0,T;H^1_0(\Omega))
\end{equation*}
in general due to the oscillation of the third term $u_1(x,t,\mx)$ as $\e\to 0_+$.
Thus $u_1(x,t,\mx)$ plays a role as the corrector term recovering the strong compactness in this topology. For this reason, $\Phi_k$ is often called a corrector. 
\end{rmk}

\subsection{Plan of the paper and notation}
This paper is organized as follows. In the next section, we summarize relevant material on space-time two-scale convergence. Section $3$ is devoted to proving uniform estimates for solutions $u_\e$ to \eqref{DW} as $\e\to 0_+$. Furthermore, we shall prove their weak(-$\ast$) and strong convergences.~In Section $4$, we shall prove Theorem \ref{HPthm}. To prove Proposition \ref{property_of_a_hom}, we shall discuss qualitative properties of the homogenized matrix $a_{\rm hom}(t)$ in Section 5. The final section is devoted to proofs of Theorem \ref{CR} and Corollary \ref{CR2}.

\noindent
{\bf Notation.}\ 
Throughout this paper, $C>0$ denotes a non-negative constant which may vary from line to line. In addition, the subscript A of $C_{A}$ means dependence of $C_{A}$ on $A$. Let $\delta_{ij}$ be the Kronecker delta, $e_i=(\delta_{ij})_{1\le j\le N}$ be the $i$-th vector of the basis of $\R^N$, $\|\cdot\|_{H^1_0(A)}$ be defined by $\|\cdot\|_{H^1_0(A)}:=\|\nabla\cdot\|_{L^2(A)}$ for domains $A\subset \R^N$, $\nabla$ and $\nabla_y$ denote gradient operators with respect to $x$ and $y$, respectively, and $\dv$ and $\dv_y$ denote divergence operators with respect to $x$ and $y$, respectively. Furthermore, we shall use the following notation\/:
\begin{itemize}
\item $\square=(0,1)^N$, \quad $I=(0,T)$,\quad $J=(0,1)$,\quad $dZ=dydsdxdt$.
\item Define the set of smooth $\square$-periodic functions by
\begin{align*}
C^{\infty}_{\rm per}(\square) 
&= \{w\in C^{\infty}(\square) \colon w(\cdot+e_k)=w(\cdot) \text{ in } \R^N \ \text{ for }\ 1\leq k \leq N\}.
\end{align*}
\item We also define $W^{1,q}_{\rm per}(\square)$ and $L^q_{\rm per}(\square)$ as closed subspaces of $W^{1,q}(\square)$ and $L^q(\square)$ by
$$
W^{1,q}_{\rm per}(\square) = \overline{C^\infty_{\rm per}(\square)}^{\|\cdot\|_{W^{1,q}(\square)}}, \quad L^q_{\rm per}(\square) = \overline{C^\infty_{\rm per}(\square)}^{\|\cdot\|_{L^q(\square)}},
$$ 
respectively, for $1\leq q < +\infty$. In particular, set $H^1_{\rm per}(\square) := W^{1,2}_{\rm per}(\square)$. We shall simply write $L^q(\square)$ instead of $L^q_{\rm per}(\square)$, unless any confusion may arise.
\item We often write $L^q(\Omega\times \square)$ instead by $L^q(\Omega;L^q_{\rm per}(\square))$ since $L^q_{\rm per}(\square)$ is reflexive Banach space for $1<q<+\infty$. 
\item Define the mean $\langle w \rangle_y := \int_{\square} w(y) \, d y$ in $y$ of $w \in L^1(\square)$. 
\item We set $W^{1,q}_{\mathrm{per}}(\square)/\R=\{w\in W^{1,q}_{\mathrm{per}}(\square)\colon \langle w\rangle_{y}=\int_{\square}w(y)\, dy =0\}$.
\item Furthermore, let $X$ be a normed space  with a norm $\|\cdot\|_X$ and a duality pairing $\langle \cdot, \cdot \rangle_X$ between $X$ and its dual space $X^*$. Moreover, we write $X^N = X \times X \times \cdots \times X$ ($N$-product space), e.g., $[L^2(\Omega)]^N = L^2(\Omega;\R^N)$.
\end{itemize}

In order to clarify variables of integration, we shall often write, e.g., 
$\|u(\tfrac{x}{\e})\|_{L^{q}(\Omega)}$ and $\|u(x,\tfrac{x}{\e})\|_{L^{q}(\Omega)}$ instead of $\|u(\tfrac{\cdot}{\e})\|_{L^{q}(\Omega)}$ and $\|u(x,\tfrac{\cdot}{\e})\|_{L^{q}(\Omega)}$, respectively.
We often write $u(t)$ instead of $u(\cdot,t)$ for each $t\in I$ and $u:\Omega\times I\to\R$.

\section{Space-time two-scale convergence theory}
In this section, we introduce the notion of \emph{space-time two-scale convergence} and briefly summarize its crucial properties (see, e.g., \cite{Ng}, \cite{Al1,Al2}, \cite{LNW}, \cite{Zh} and \cite{Ho} for more details).
Throughout this section, let $q\in [1,+\infty]$ and $q'$ denote for the H\"older conjugate of $q$ (i.e., $1/q+1/q'=1$ if $1<q<+\infty$; $q'=1$ if $q=+\infty$; $q'=+\infty$ if $q=1$) unless any confusion may arise.
Furthermore, let $I=(0,T)$, $\square=(0,1)^N$ and $J=(0,1)$.

We first define a class of test functions for the space-time two-scale convergence, called \emph{admissible test functions}.
\begin{defi}[Admissible test function]\label{admissible}
Let $q'\in [1,+\infty]$ and let $X\subset L^{q'}(\Omega\times I\times \square\times J)$ be a separable normed space equipped with norm $\|\cdot\|_X$.
Then $(X,\|\cdot\|_X)$ is called an \emph{admissible test function space} {\rm (}for the weak space-time two-scale convergence in $L^q(\Omega\times I\times \square \times J)${\rm)}, if it holds that, for all $\Psi \in X$, $(x,t)\mapsto \Psi(x,t,\mx,\mt)$ is Lebesgue measurable in $\Omega\times I$ for $\e>0$, and
\begin{align*}
&\lim_{\e\to 0_+}\left\|\Psi\left(x,t,\tfrac{x}{\e},\tfrac{t}{\e^r}\right)\right\|_{L^{q'}(\Omega\times I)}  =  \left\|\Psi(x,t,y,s)\right\|_{L^{q'}(\Omega\times I\times \square\times J)}, 
\\
 &\left\|\Psi\left(x,t,\tfrac{x}{\e},\tfrac{t}{\e^r}\right)\right\|_{L^{q'}(\Omega\times I)} \le  C\left\|\Psi(x,t,y,s)\right\|_{X}\quad \text{ for }\ \e>0. 
\end{align*}
Moreover, $\Psi\in$ $X$ is called an \emph{admissible test function} {\rm (}for the weak space-time two-scale convergence in $L^q(\Omega\times I\times \square \times J)${\rm)}.
\end{defi}

The following fact is well known and often used, in particular, to discuss weak convergence of periodic test functions.
\begin{prop}[Mean-value property]\label{mean}
Let $w\in L^q(\square\times J)$ and set $w_{\e}(x,t)=w(\mx,\mt)$ for $\e>0$ and $0<r<+\infty$. 
For any bounded domain $\Omega\subset\R^N$ and any bounded interval $I\subset \R$, it holds that 
\begin{align*}
\begin{cases}
w_{\e}\to \langle w\rangle_{y,s} \ \text{ weakly in } L^q(\Omega\times I) \quad &\text{ if }\ q\in[1,+\infty), \\
w_{\e}\to \langle w\rangle_{y,s}\ \text{ weakly-}\ast\text{ in } L^{\infty}(\Omega\times I)
\quad &\text{ if }\ q=+\infty \\
\end{cases}
\end{align*}
as $\e\to 0_+$. Here $\langle w\rangle_{y,s}$ denotes the mean of $w$, i.e., 
$$
\langle w\rangle_{y,s}=
\int_0^1\int_{\square}w(y,s)\, dyds.
$$
\end{prop}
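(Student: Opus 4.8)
The plan is to reduce the statement to a single ``Riemann--Lebesgue'' estimate for continuous periodic $w$ and compactly supported continuous test functions, and then to bootstrap by density and, in the endpoint case $q=1$, by truncation. The first ingredient is a uniform bound: enclosing $\Omega\times I$ in an open box $Q$ and covering $Q$ by the scaled cells $Q^\e_{k,j}:=\e(k+\square)\times\e^r(j+J)$, $(k,j)\in\mathbb Z^N\times\mathbb Z$, a change of variables together with the $\square\times J$-periodicity of $w$ gives, for $q<+\infty$ and all small $\e>0$,
\[
\|w_\e\|_{L^q(\Omega\times I)}^q\le\sum_{\text{cells meeting }Q}\int_{Q^\e_{k,j}}|w_\e|^q=(\#\text{cells})\,\e^{N+r}\,\langle|w|^q\rangle_{y,s}\le C_{\Omega,I}\,\|w\|_{L^q(\square\times J)}^q,
\]
while $\|w_\e\|_{L^\infty(\Omega\times I)}=\|w\|_{L^\infty(\square\times J)}$ trivially. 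In particular $(w_\e)$ is bounded in $L^q(\Omega\times I)$.

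Next I would prove the core claim: if $w$ is continuous and $\square\times J$-periodic and $\varphi\in C_c(\Omega\times I)$, then $\int_{\Omega\times I}w_\e\varphi\,dxdt\to\langle w\rangle_{y,s}\int_{\Omega\times I}\varphi\,dxdt$ as $\e\to0_+$. Writing a compact neighbourhood of $\operatorname{supp}\varphi$ inside $\Omega\times I$ as a disjoint union of cells $Q^\e_{k,j}$ plus a boundary layer of total measure $\to0$, picking $z_{k,j}\in Q^\e_{k,j}$, and splitting
$\int w_\e\varphi=\sum_{k,j}\int_{Q^\e_{k,j}}w_\e\,(\varphi-\varphi(z_{k,j}))+\sum_{k,j}\varphi(z_{k,j})\int_{Q^\e_{k,j}}w_\e$,
the first sum is controlled by $\|w\|_{L^\infty}$ times the modulus of continuity of $\varphi$ at scale $\max\{\e,\e^r\}\to0$; in the second sum each interior integral equals $\e^{N+r}\langle w\rangle_{y,s}$ by periodicity, so it is $\langle w\rangle_{y,s}$ times a Riemann sum for $\int\varphi$, up to the boundary-layer error. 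Both errors vanish, proving the claim. (Note that $\operatorname{supp}\varphi\Subset\Omega\times I$, so no regularity of $\partial\Omega$ is needed.)

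To remove the regularity of $w$: for $w\in L^q_{\rm per}(\square\times J)$ with any $q\in[1,+\infty]$ and $\delta>0$, choose a continuous periodic $\tilde w$ with $\|w-\tilde w\|_{L^1(\square\times J)}<\delta$; then for $\varphi\in C_c(\Omega\times I)$ and a compact $K\supset\operatorname{supp}\varphi$, $|\int(w-\tilde w)_\e\varphi|\le\|\varphi\|_{L^\infty}\|(w-\tilde w)_\e\|_{L^1(K)}\le C_K\|\varphi\|_{L^\infty}\delta$ by the uniform bound with $q=1$, while $|\langle w-\tilde w\rangle_{y,s}|<\delta$, so the core claim extends to all such $w$. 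For $q\in(1,+\infty)$ the sequence $(w_\e)$ is bounded in the reflexive space $L^q(\Omega\times I)$ and $C_c(\Omega\times I)$ is dense in $L^{q'}(\Omega\times I)$, which upgrades convergence-on-$C_c$ to weak convergence in $L^q$; for $q=+\infty$ the uniform $L^\infty$-bound and density of $C_c$ in $L^1=(L^\infty)^*$ give weak-$\ast$ convergence.

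The only delicate point is $q=1$, since $C_c$ is not dense in $L^\infty=(L^1)^*$; I would handle it by truncation. For $M>0$ put $w^M:=(-M)\vee(w\wedge M)\in L^\infty_{\rm per}(\square\times J)$. By the case $q=+\infty$ just established, $w^M_\e\rightharpoonup\langle w^M\rangle_{y,s}$ weakly-$\ast$ in $L^\infty$, hence weakly in $L^1(\Omega\times I)$ because $\Omega\times I$ is bounded, and $\langle w^M\rangle_{y,s}\to\langle w\rangle_{y,s}$ as $M\to\infty$. On the other hand, by the uniform $L^1$-bound, $\limsup_{\e\to0_+}\|w_\e-w^M_\e\|_{L^1(\Omega\times I)}\le C_{\Omega,I}\|w-w^M\|_{L^1(\square\times J)}\to0$ as $M\to\infty$. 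A standard $\e$--$M$ diagonal estimate combining these three facts yields $\int_{\Omega\times I}(w_\e-\langle w\rangle_{y,s})\varphi\,dxdt\to0$ for every $\varphi\in L^\infty(\Omega\times I)$, i.e. weak convergence in $L^1$. The main obstacle is precisely this endpoint reconciliation of the failure of reflexivity and density in $L^1$; everything else is the routine tiling-and-Riemann-sum computation.
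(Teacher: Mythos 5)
Your proposal is correct. Note, however, that the paper does not prove this proposition at all: it simply cites Theorem 2.6 of Cioranescu--Donato \cite{CD}, so what you have written is essentially a self-contained reconstruction of the classical argument behind that citation. Your route is the standard one: the uniform bound $\|w_\e\|_{L^q(\Omega\times I)}\le C\|w\|_{L^q(\square\times J)}$ by tiling with the cells $\e(k+\square)\times\e^r(j+J)$, the Riemann-sum computation for continuous periodic $w$ against $C_c(\Omega\times I)$ test functions (where the exact identity $\int_{Q^\e_{k,j}}w_\e=\e^{N+r}\langle w\rangle_{y,s}$ and the modulus of continuity of $\varphi$ at scale $\max\{\e,\e^r\}$ do the work), then density in $w$, and finally density of $C_c$ in $L^{q'}$ together with the uniform bound to upgrade to weak (resp.\ weak-$\ast$) convergence for $1<q<+\infty$ (resp.\ $q=+\infty$). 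Your treatment of the endpoint $q=1$ by truncation $w^M=(-M)\vee(w\wedge M)$, the already-established $L^\infty$ case, and the uniform $L^1$ bound applied to $w-w^M$ is sound and is precisely the delicate point of the statement, since $C_c$ is not dense in $L^\infty=(L^1)^\ast$; the three-term estimate you describe gives $\int_{\Omega\times I}(w_\e-\langle w\rangle_{y,s})\varphi\,dxdt\to0$ for every $\varphi\in L^\infty(\Omega\times I)$, which is exactly weak $L^1$ convergence. In short: the argument is complete and, compared with the paper, its added value is that it makes explicit (and elementary) what the paper delegates to the reference, including the $q=1$ case.
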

\begin{proof}
See \cite[Theorem 2.6]{CD}. 
\end{proof}
Now, we are in a position to define the notion of \emph{space-time two-scale convergence} in the following.
\begin{defi}[Weak space-time two-scale convergence and very weak two-scale convergence]\label{wtts}\ 
\begin{itemize}
\item[(i)]
A bounded sequence $(v_{\e})$ in $L^q(\Omega\times I)$ is said to \emph{weakly space-time two-scale converge} to a limit $v\in L^q(\Omega\times I\times \square\times J)$ if it holds that
\begin{align*}
\lim_{\e\to 0_+}
\int_0^T\int_{\Omega}v_{\e}(x,t)\Psi(x,t,\tfrac{x}{\e},\tfrac{t}{\e^r})\, dxdt
=
\int_0^T\int_{\Omega}\int_{0}^1\int_{\square}v(x,t,y,s)\Psi(x,t,y,s)\, dZ
\end{align*}
for any admissible function $\Psi\in X\subset L^q(\Omega\times I\times \square\times J)$ and it is denoted by
\[
v_{\e}\wtts v \quad \text{ in }\ L^q(\Omega\times I\times \square\times J).
\]
\item[(ii)]
A bounded sequence $(v_{\e})$ in $L^q(\Omega\times I)$ is said to \emph{very weakly two-scale converge to a limit $v$ in $L^q(\Omega\times I\times \square\times J)$} when we choose $\Psi(x,y,t,s)=\phi(x)b(y)\psi(t)c(s)$ for any $\phi\in C^{\infty}_{c}(\Omega)$, $b\in C^{\infty}_{\rm per}(\square)/\R$, $\psi\in C^{\infty}_c(I)$ and $c\in C^{\infty}_{\rm per}(J)$, and then, 
it is written by
\[
v_{\e}\vw v \quad \text{ in }\ L^q(\Omega\times I\times \square\times J).
\]
\end{itemize}
\end{defi}

\begin{rmk}
\rm
Due to the extension of the original definition in \cite{Ng, Al1} of the test function to $\Psi\in L^{q'}(\Omega\times I;C_{\rm per}(\square\times J))$ the boundedness of $(v_{\e})$ is essential. Indeed, some counterexamples that the (weak space-time) two-scale limit does not coincide with the weak limit are known in \cite[Examples 11 and 12]{LNW}.
\end{rmk}
 
\begin{rmk}\label{indepwtts}
\rm 
As for the relation between weak or strong convergence and weak space-time two-scale convergence, the following holds\/{\rm :}
\begin{itemize}
\item[(i)] If $v_{\e}\wtts v$ in $L^{q}(\Omega\times I\times \square\times J)$, then $v_{\e}\to \langle v\rangle_{y,s}$ weakly in $L^q(\Omega\times I)$.
\item[(ii)] If $v_{\e}\to \hat{v}$ strongly in $L^{1}(\Omega\times I)$, then $v_{\e}\wtts  \hat{v}$ in $L^{q}(\Omega\times I\times \square\times J)$. 
\end{itemize}
\end{rmk}
The following theorem is concerned with weak space-time two-scale compactness of bounded sequences in $L^q(\Omega\times I)$.

\begin{thm}[Weak space-time two-scale compactness]\label{multicpt}
Let $q\in (1,\infty]$. Then, for any bounded sequence $(v_{\e})$ in $L^q(\Omega\times I)$, there exist a subsequence $(\e_k)$ of $(\e)$ such that $\e_k\to 0_+$ and a limit $v\in L^q(\Omega\times I\times \square\times J)$ such that 
$$
v_{\e_k}\wtts v\quad \text{ in }\ L^q(\Omega\times I\times \square\times J).
$$
\end{thm}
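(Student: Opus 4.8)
The plan is to realize each $v_{\e}$ as a bounded linear functional on a separable admissible test function space, to extract a weak-$*$ convergent subsequence of these functionals, and then to identify the limit functional with integration against an $L^q$ function on the product space $\Omega\times I\times\square\times J$ by a Riesz representation argument.

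First I would fix a separable admissible test function space $X\subset L^{q'}(\Omega\times I\times\square\times J)$ that is, in addition, dense in $L^{q'}(\Omega\times I\times\square\times J)$; one may take $X=L^{q'}(\Omega\times I;C_{\rm per}(\square\times J))$, whose admissibility follows from the mean-value property (Proposition~\ref{mean}) together with routine measurability and approximation-by-step-functions arguments, and whose separability and density in $L^{q'}(\Omega\times I\times\square\times J)$ hold because $q'\in[1,+\infty)$ for $q\in(1,+\infty]$. For each $\e>0$ I would then define $\mu_{\e}\in X^*$ by
\[
\mu_{\e}(\Psi) := \int_0^T\!\!\int_\Omega v_{\e}(x,t)\,\Psi\bigl(x,t,\tfrac{x}{\e},\tfrac{t}{\e^r}\bigr)\,dx\,dt,\qquad \Psi\in X .
\]
Hölder's inequality together with the second defining inequality of Definition~\ref{admissible} gives
\[
|\mu_{\e}(\Psi)|\le \|v_{\e}\|_{L^q(\Omega\times I)}\,\bigl\|\Psi(x,t,\tfrac{x}{\e},\tfrac{t}{\e^r})\bigr\|_{L^{q'}(\Omega\times I)}\le C\,C_0\,\|\Psi\|_X ,
\]
where $C_0:=\sup_{\e>0}\|v_{\e}\|_{L^q(\Omega\times I)}<+\infty$, so $(\mu_{\e})$ is bounded in $X^*$. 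Since $X$ is separable, bounded subsets of $X^*$ are sequentially weak-$*$ compact, so I can extract $\e_k\to 0_+$ and $\mu\in X^*$ with $\mu_{\e_k}(\Psi)\to\mu(\Psi)$ for every $\Psi\in X$.

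The next step is to upgrade the continuity of $\mu$. Using the first defining property of Definition~\ref{admissible} (convergence of the oscillating $L^{q'}$-norms to the $L^{q'}$-norm on $\Omega\times I\times\square\times J$) one gets
\[
|\mu(\Psi)|=\lim_{k\to\infty}|\mu_{\e_k}(\Psi)|\le C_0\,\limsup_{k\to\infty}\bigl\|\Psi(x,t,\tfrac{x}{\e_k},\tfrac{t}{\e_k^r})\bigr\|_{L^{q'}(\Omega\times I)}=C_0\,\|\Psi\|_{L^{q'}(\Omega\times I\times\square\times J)}
\]
for every $\Psi\in X$. Hence $\mu$ is continuous for the $L^{q'}(\Omega\times I\times\square\times J)$-norm on the dense subspace $X$, and therefore extends uniquely to a bounded linear functional on $L^{q'}(\Omega\times I\times\square\times J)$. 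Invoking the Riesz representation theorem — $(L^{q'})^*=L^q$ when $1<q<+\infty$ (both exponents finite), and $(L^1)^*=L^\infty$ on the $\sigma$-finite space $\Omega\times I\times\square\times J$ when $q=+\infty$ — produces $v\in L^q(\Omega\times I\times\square\times J)$ with $\mu(\Psi)=\int_{\Omega\times I\times\square\times J}v\,\Psi\,dZ$. Unwinding the definitions gives precisely $v_{\e_k}\wtts v$ in $L^q(\Omega\times I\times\square\times J)$.

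I expect the step requiring the most care to be the passage from $X$-norm continuity of $\mu$ to $L^{q'}(\Omega\times I\times\square\times J)$-norm continuity: this is exactly where the two-scale structure enters, through the admissibility requirement that the oscillating norms converge to the cell-averaged norm, and it is what forces the limit to lie in $L^q$ of the product space rather than merely in the abstract dual $X^*$. The endpoint case $q=+\infty$ also deserves separate attention, both because the Riesz identification $(L^1)^*\cong L^\infty$ must be justified on the $\sigma$-finite product space and because one must check density of the admissible test functions in $L^1(\Omega\times I\times\square\times J)$; alternatively $q=+\infty$ may be reduced to the finite-$q$ case on the bounded set $\Omega\times I$ followed by a diagonal extraction.
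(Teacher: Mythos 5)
Your proposal is correct and coincides with the standard functional-analytic argument behind this compactness result: the paper itself does not prove Theorem \ref{multicpt} but cites \cite[Theorem 2.3]{Ho}, and the proof there (going back to Nguetseng and Allaire) is exactly your scheme of viewing $v_\e$ as bounded functionals on the separable admissible test space $L^{q'}(\Omega\times I;C_{\rm per}(\square\times J))$, extracting a weak-$\ast$ limit, upgrading to $L^{q'}$-continuity via the norm-convergence property of admissible test functions, and identifying the limit by Riesz representation. No gaps beyond the routine admissibility and density facts you already flag, so nothing further is needed.
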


\begin{proof}
See \cite[Theorem 2.3]{Ho}.
\end{proof}

As for weak space-time two-scale compactness of gradients, we obtain
\begin{thm}[Weak space-time two-scale compactness for gradients]\label{gradientcpt}\ 
Let $q\in (1,+\infty)$ and let $(v_{\e})$ be a bounded sequence in $W^{1,q}(\Omega\times I)$.
Then there exist a subsequence $\e_k\to 0_+$, a limit $v\in L^q(\Omega\times I)$ and a function $v_1\in L^q(\Omega\times I; W^{1,q}_{\rm per}(\square\times J)/\R)$ such that 
\begin{align*}
\nabla_{t,x} v_{\e_k}\wtts \nabla_{t,x} v+\nabla_{s,y} v_1\quad \text{ in }\ [L^q(\Omega\times I\times \square\times J)]^{N+1}.
\end{align*}
Here and henceforth, $\nabla_{t,x}=(\partial_t,\partial_{x_1},\ldots,\partial_{x_N})$ and $\nabla_{s,y}=(\partial_s, \partial_{y_1},\ldots,\partial_{y_N})$. 
\end{thm}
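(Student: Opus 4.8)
The plan is to reduce the space-time statement to the classical two-scale compactness theorem for gradients of Nguetseng--Allaire type, applied in the $(N+1)$-dimensional ``space'' variable $z = (t,x) \in I \times \Omega \subset \R^{N+1}$ with periodicity cell $\square \times J = (0,1)^{N+1}$ and oscillation argument $z/\e' := (\tfrac{t}{\e^r}, \tfrac{x}{\e})$. The only genuine novelty relative to the standard result is that the two scales $\e$ and $\e^r$ appearing in the spatial and temporal slots are not equal; so the first step is to record that Definition \ref{wtts}(i) is exactly the weak two-scale convergence associated with the anisotropic dilation $(x,t)\mapsto(\tfrac{x}{\e},\tfrac{t}{\e^r})$, and that Theorem \ref{multicpt} (already stated, quoting \cite[Theorem 2.3]{Ho}) supplies compactness of bounded sequences in $L^q$ against this same dilation. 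Consequently every tool of the periodic two-scale machinery is available in this setting by the cited references.

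The key steps, in order, would be as follows. First, since $(v_\e)$ is bounded in $W^{1,q}(\Omega\times I)$, both $(v_\e)$ and $(\nabla_{t,x} v_\e)$ are bounded in $L^q$; by Theorem \ref{multicpt} (applied componentwise to the gradient) there is a common subsequence $\e_k\to 0_+$, a function $v\in L^q(\Omega\times I)$ and a vector field $\chi = (\chi_0,\chi_1,\dots,\chi_N)\in [L^q(\Omega\times I\times\square\times J)]^{N+1}$ with $v_{\e_k}\wtts v_0$ for some $v_0$ and $\nabla_{t,x} v_{\e_k}\wtts \chi$. By Remark \ref{indepwtts}(i), $v_{\e_k}\to\langle v_0\rangle_{y,s}$ weakly in $L^q$ and $\nabla_{t,x}v_{\e_k}\to\langle\chi\rangle_{y,s}$ weakly in $L^q$; passing to the limit in the definition of the weak derivative identifies $\langle\chi\rangle_{y,s} = \nabla_{t,x}\langle v_0\rangle_{y,s}$, so after renaming $v := \langle v_0\rangle_{y,s}\in W^{1,q}(\Omega\times I)$ we may write $\chi = \nabla_{t,x} v + \zeta$ where $\zeta$ has zero mean in $(y,s)$. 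The second step is to show $\zeta = \nabla_{s,y} v_1$ for some $v_1\in L^q(\Omega\times I;W^{1,q}_{\rm per}(\square\times J)/\R)$. This is the heart of the argument and follows the standard ``test against divergence-free oscillating fields'' device: pick $\Psi\in [C^\infty_c(\Omega\times I;C^\infty_{\rm per}(\square\times J))]^{N+1}$ with $\dv_{s,y}\Psi \equiv 0$, write
\begin{equation*}
\int_{\Omega\times I} \nabla_{t,x} v_{\e_k}(x,t)\cdot \Psi\bigl(x,t,\tfrac{x}{\e_k},\tfrac{t}{\e_k^r}\bigr)\,dxdt
= -\int_{\Omega\times I} v_{\e_k}(x,t)\,\dv_{t,x}\!\Bigl[\Psi\bigl(x,t,\tfrac{x}{\e_k},\tfrac{t}{\e_k^r}\bigr)\Bigr]\,dxdt,
\end{equation*}
and note that on the right the leading singular term is $\e_k^{-1}$ (from differentiating the $y$-slot, with spatial scale $\e_k$) plus $\e_k^{-r}$ (from the $s$-slot, temporal scale $\e_k^r$) times $(\dv_{s,y}\Psi)(\cdots)$, which vanishes identically; what remains is $O(1)$, so passing $k\to\infty$ and using both two-scale limits gives
\begin{equation*}
\int_{\Omega\times I}\int_{\square\times J}\bigl(\nabla_{t,x}v(x,t)+\zeta(x,t,y,s)\bigr)\cdot\Psi(x,t,y,s)\,dZ
= -\int_{\Omega\times I}\int_{\square\times J} v(x,t)\,\dv_{t,x}\Psi\,dZ,
\end{equation*}
and since $\int_{\square\times J}\Psi\,dyds$ itself is divergence-free in $(t,x)$ the right side cancels $\nabla_{t,x}v$, leaving $\int\int \zeta\cdot\Psi\,dZ = 0$ for all such $\Psi$. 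A de~Rham / orthogonality argument on the torus $\square\times J$ (as in \cite{Al1}, \cite{Ho}) then yields the existence of $v_1\in L^q(\Omega\times I;W^{1,q}_{\rm per}(\square\times J)/\R)$ with $\zeta = \nabla_{s,y}v_1$, which is the claim.

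The main obstacle is precisely the bookkeeping of the two distinct scales in the integration-by-parts step: one must check that differentiating $\Psi(x,t,\tfrac{x}{\e_k},\tfrac{t}{\e_k^r})$ in $t$ produces the factor $\e_k^{-r}$ on the $s$-slot (not $\e_k^{-1}$), so that the divergence-free condition must be imposed with respect to $(s,y)$ jointly and annihilates \emph{both} singular contributions simultaneously; and one must confirm that the subleading terms are genuinely bounded in $L^{q}$ uniformly in $k$ (they involve $\Psi$ and its $(x,t)$-derivatives evaluated at the oscillating argument, hence admissible test functions by Definition \ref{admissible}, so Theorem \ref{multicpt} controls them). Apart from this, the argument is the verbatim $(N+1)$-dimensional transcription of the classical proof; the characterization of the orthogonal complement of divergence-free fields on $\square\times J$ in $L^{q'}$ is standard for $1<q<+\infty$ and is the reason the range of $q$ is restricted. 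Finally, since the limit $(v,v_1)$ is obtained along a subsequence, no uniqueness beyond the usual ($v_1$ determined up to an $(x,t)$-dependent additive constant, already quotiented out) is asserted, consistent with the statement.
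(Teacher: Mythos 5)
Your reduction to the classical Nguetseng--Allaire argument breaks down at exactly the step you yourself flag as the ``main obstacle'', and the check you assert there does not go through. Write $\Psi=(\Psi_0,\Psi')$ for the temporal and spatial components of the test field. Differentiating $\Psi(x,t,\tfrac{x}{\e_k},\tfrac{t}{\e_k^{r}})$ in $(t,x)$ produces the singular contribution
\begin{equation*}
\e_k^{-r}\,(\partial_s\Psi_0)\bigl(x,t,\tfrac{x}{\e_k},\tfrac{t}{\e_k^{r}}\bigr)
+\e_k^{-1}\,(\dv_y\Psi')\bigl(x,t,\tfrac{x}{\e_k},\tfrac{t}{\e_k^{r}}\bigr),
\end{equation*}
which is \emph{not} a single power of $\e_k$ times $\dv_{s,y}\Psi$. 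Hence imposing $\dv_{s,y}\Psi=\partial_s\Psi_0+\dv_y\Psi'=0$ annihilates this term only when $r=1$; for $r\neq1$ it leaves $(\e_k^{-1}-\e_k^{-r})(\dv_y\Psi')$, which is unbounded unless $\dv_y\Psi'\equiv0$ (and then also $\partial_s\Psi_0\equiv0$). So your integration by parts only yields orthogonality of $\zeta$ against the strictly smaller class $\{\partial_s\Psi_0=0,\ \dv_y\Psi'=0\}$, whose annihilator is larger than the set of fields of the form $\nabla_{s,y}v_1$, and the de~Rham step no longer delivers the claimed representation.

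Moreover, this is not a repairable bookkeeping issue: under the stated hypotheses the joint representation can actually fail for $r\neq1$. For $r<1$ take $v_{\e}(x,t)=\e\,\sin(2\pi\tfrac{x_1}{\e})\cos(2\pi\tfrac{t}{\e^{r}})$; then $(v_\e)$ is bounded in $W^{1,q}(\Omega\times I)$ (indeed $\partial_t v_\e\to0$ uniformly), and $\nabla_{t,x}v_{\e}$ weakly space-time two-scale converges to $\bigl(0,\,2\pi\cos(2\pi y_1)\cos(2\pi s),0,\dots,0\bigr)$. Since every $v_1\in W^{1,q}_{\rm per}(\square\times J)$ has $\langle\nabla_{s,y}v_1\rangle_{y,s}=0$, a representation $\nabla_{t,x}v+\nabla_{s,y}v_1$ of this limit would force $\nabla_{t,x}v=0$, $\partial_s v_1=0$ and $\partial_{y_1}v_1=2\pi\cos(2\pi y_1)\cos(2\pi s)$, which is impossible; for $r>1$ the example $v_\e=\e^{r}\sin(2\pi\tfrac{x_1}{\e})\cos(2\pi\tfrac{t}{\e^{r}})$ plays the same role with the temporal component. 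So $W^{1,q}$-boundedness alone cannot give the conclusion when the two microscopic scales differ, and your ``verbatim $(N+1)$-dimensional transcription'' is available only in the equal-scale case $r=1$, where your argument is indeed the standard correct proof. Note also that the paper does not prove Theorem \ref{gradientcpt} but cites \cite[Theorem 20]{LNW}, i.e.\ the single-scale gradient compactness result; in the actual homogenization argument the extra structure of the solutions (the bounds of Lemma \ref{bdd} and the identification of $w_1$ via Lemma \ref{keylem} and Corollary \ref{veryweak}, which in particular show $w_1$ is $s$-independent for $r>1$) is what compensates for the scale mismatch, and any honest proof of the statement for general $r$ would have to bring in information of that kind rather than only $W^{1,q}$-boundedness.
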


\begin{proof}
See \cite[Theorem 20]{LNW}. 
\end{proof}

As a corollary of Theorem \ref{gradientcpt}, the following is obtained.     
\begin{corollary}[cf.~\cite{FHO,FHOP,Ho}]\label{veryweak}
Under the same assumptions as in Theorem \ref{gradientcpt}, 
it holds that
\begin{equation*}
\frac{v_{\e_k}}{\e_k}\vw v_1\quad\text{ in }\ L^q(\Omega\times I\times \square\times J).
\end{equation*}
\end{corollary}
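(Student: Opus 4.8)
The plan is to verify the defining limit relation of very weak two-scale convergence directly, for an arbitrary test function of the product form allowed in Definition \ref{wtts}(ii). So I would fix $\phi\in C^\infty_c(\Omega)$, $b\in C^\infty_{\rm per}(\square)/\R$, $\psi\in C^\infty_c(I)$ and $c\in C^\infty_{\rm per}(J)$ and write $\Psi(x,t,y,s)=\phi(x)b(y)\psi(t)c(s)$; the goal is
\[
\lim_{k\to\infty}\int_0^T\!\!\int_\Omega\frac{v_{\e_k}(x,t)}{\e_k}\,\Psi\Bigl(x,t,\tfrac{x}{\e_k},\tfrac{t}{\e_k^r}\Bigr)\,dxdt=\int_{\Omega\times I\times\square\times J}v_1\,\Psi\,dZ .
\]
The key device, and the step I expect to require the most care, is to trade the singular prefactor $1/\e_k$ for a spatial derivative. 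Since $b$ is smooth, $\square$-periodic and has zero mean, the periodic Poisson problem $-\Delta_y w=b$ on $\T^N$ has a smooth $\square$-periodic solution $w$; setting $B:=-\nabla_y w\in[C^\infty_{\rm per}(\square)]^N$ gives $\dv_y B=b$ together with the crucial identity $\langle B\rangle_y=\langle\nabla_y w\rangle_y=0$, so that $b(\tfrac{x}{\e_k})=\e_k\,\dv_x\bigl[B(\tfrac{x}{\e_k})\bigr]$.

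Substituting this identity and integrating by parts in $x$ (no boundary terms, because $\phi$ has compact support in $\Omega$) gives
\begin{align*}
\int_0^T\!\!\int_\Omega\frac{v_{\e_k}}{\e_k}\,\Psi\bigl(x,t,\tfrac{x}{\e_k},\tfrac{t}{\e_k^r}\bigr)\,dxdt
&=\int_0^T\!\!\int_\Omega v_{\e_k}\,\phi(x)\,\psi(t)\,c\bigl(\tfrac{t}{\e_k^r}\bigr)\,\dv_x\bigl[B\bigl(\tfrac{x}{\e_k}\bigr)\bigr]\,dxdt\\
&=-\int_0^T\!\!\int_\Omega\bigl(\phi\,\nabla v_{\e_k}+v_{\e_k}\nabla\phi\bigr)\cdot B\bigl(\tfrac{x}{\e_k}\bigr)\,\psi(t)\,c\bigl(\tfrac{t}{\e_k^r}\bigr)\,dxdt.
\end{align*}
The term containing $v_{\e_k}\nabla\phi$ is lower order, and I would argue it tends to $0$: by the Rellich--Kondrachov theorem $v_{\e_k}\to v$ strongly in $L^q(\Omega\times I)$, while the mean-value property (Proposition \ref{mean}) gives $B(\tfrac{x}{\e_k})\,c(\tfrac{t}{\e_k^r})\to\langle B\rangle_y\langle c\rangle_s=0$ weakly-$\ast$ in $L^\infty(\Omega\times I)$, and the pairing of an $L^q$-strongly convergent sequence with an $L^\infty$-weakly-$\ast$ convergent one passes to the product of the limits, here $0$.

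For the remaining main term $-\int_0^T\int_\Omega\phi\,\nabla v_{\e_k}\cdot B(\tfrac{x}{\e_k})\,\psi(t)\,c(\tfrac{t}{\e_k^r})\,dxdt$, the function $\phi(x)B(y)\psi(t)c(s)$ is an admissible test function, and by Theorem \ref{gradientcpt} the spatial part of the gradient satisfies $\nabla v_{\e_k}\wtts\nabla v+\nabla_y v_1$ in $[L^q(\Omega\times I\times\square\times J)]^N$; hence this term converges to $-\int_{\Omega\times I\times\square\times J}(\nabla v+\nabla_y v_1)\cdot B(y)\,\phi(x)\psi(t)c(s)\,dZ$. Here the $\nabla v$-contribution vanishes because $\int_\square B(y)\,dy=0$, and the $\nabla_y v_1$-contribution is rewritten, by integration by parts in $y$ over the torus $\square$ (again no boundary terms, by $\square$-periodicity of $v_1$ and $B$), as $\int_{\Omega\times I\times\square\times J}v_1\,b(y)\,\phi(x)\psi(t)c(s)\,dZ=\int_{\Omega\times I\times\square\times J}v_1\,\Psi\,dZ$. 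Collecting the three contributions gives exactly the displayed identity, i.e.\ $v_{\e_k}/\e_k\vw v_1$. I would also note that, unlike in Definition \ref{wtts}(i), the sequence $v_{\e_k}/\e_k$ need not be bounded in $L^q$; the content of the corollary is precisely that the restricted family of mean-zero test functions still detects the well-defined limit $v_1$. Apart from the construction of $B$ with vanishing $y$-mean, every step is routine bookkeeping with the two-scale compactness already in hand.
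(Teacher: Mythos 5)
Your argument is correct. Note that the paper itself offers no written proof of this corollary — it simply cites Holmbom (Corollary 3.3) and the very-weak-convergence papers of Flod\'en et al. — and your proof is essentially the classical argument used there: since $b$ has zero mean over $\square$, solve the periodic Poisson problem to write $b=\dv_y B$ with $B$ smooth, periodic and of vanishing $y$-mean, trade the factor $1/\e_k$ for $\dv_x$ acting on $B(\tfrac{x}{\e_k})$, integrate by parts against $\phi$, and then pass to the limit in the principal term via the admissibility of $\phi(x)B(y)\psi(t)c(s)$ together with Theorem \ref{gradientcpt} (spatial components), the $\nabla v$-contribution dying because $\langle B\rangle_y=0$ and the $\nabla_y v_1$-contribution returning $\int v_1\,\Psi\,dZ$ after integrating by parts on the torus. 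The only step worth a half-sentence of extra care is the lower-order term: Rellich--Kondrachov gives strong $L^q$ convergence only along a further subsequence and the strong limit need not be identified a priori, but since the oscillating factor $B(\tfrac{x}{\e_k})c(\tfrac{t}{\e_k^r})$ converges weakly-$\ast$ to $\langle B\rangle_y\langle c\rangle_s=0$, every such sub-subsequence limit of the term is $0$, so the full term vanishes in the limit regardless; with that observation your proof is complete and matches the cited source's approach rather than deviating from it.
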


\begin{proof}
See \cite[Corollary 3.3]{Ho}. 
\end{proof}

\section{Uniform estimates}
To discuss convergence of solutions $u_{\e}$ for \eqref{DW}, we shall first verify their uniform boundedness, and then, we shall prove their weak or strong convergences.
\begin{lem}[Uniform estimates]\label{bdd}
Let $u_{\e}\in L^\infty(I;H^1_0(\Omega))$ be the unique weak solution of \eqref{DW}
under the same assumptions as in Theorem \ref{HPthm} and let $I_{\sigma}=(\sigma, T)$ for any $\sigma>0$. Then the following {\rm(i)-(vi)} hold\/{\rm :}
\begin{enumerate}
\rm
\item[(i)]
$(u_{\e}) $ is bounded in $L^{\infty}(I;H^1_{0}(\Omega))$,
\item[(ii)]
$(\partial_tu_{\e})$ is bounded in $L^{\infty}(I;L^{2}(\Omega))$,
\item[(iii)]
$(\sqrt{t\e^{-r}}\partial_{t}u_{\e})$ is bounded in $L^2(\Omega\times I)$, provided that $C_{\ast}\neq 0$,
\item[(iv)]
$(\partial_{tt}^2u_{\e}+g(\mt)\partial_{t}u_{\e})$ is bounded in $L^2(I;H^{-1}(\Omega))$,
\item[(v)]
$(\partial_{tt}^2u_{\e})$ is bounded in 
$\begin{cases}
L^2(I;H^{-1}(\Omega)) &\text{ if } C_{\ast}= 0,\\
L^2(I_\sigma;H^{-1}(\Omega)) &\text{ if } C_{\ast}\neq 0,
\end{cases}$
\item[(vi)]
$(t\e^{-r}\partial_{t}u_{\e})$ is bounded in $L^2(I_\sigma;H^{-1}(\Omega))$, provided that $C_{\ast}\neq 0$.
\end{enumerate}
\end{lem}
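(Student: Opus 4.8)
The plan is to run the standard energy method for damped wave equations, but to track carefully how the quasi-periodic term $g(\mt)=g_{\rm per}(\mt)+C_\ast\mt$ contributes, since the singular part $C_\ast\mt$ provides an extra damping that degenerates only at $t=0$ (hence the weighted estimates (iii),(vi) and the restriction to $I_\sigma$). The test function to use in the weak form \eqref{weakform} is $\phi=\partial_t u_\e(t)$. First I would multiply \eqref{weakform} by $\partial_t u_\e$, integrate over $\Omega$, and recognize the resulting identity as
\begin{equation*}
\frac{1}{2}\frac{d}{dt}\Bigl(\|\partial_t u_\e(t)\|_{L^2(\Omega)}^2 + A_\e^t(u_\e(t),u_\e(t))\Bigr)
- \frac12\bigl(\partial_t A_\e^t\bigr)(u_\e(t),u_\e(t))
+ \int_\Omega g(\mt)|\partial_t u_\e(t)|^2\,dx
= \int_\Omega f_\e\,\partial_t u_\e\,dx.
\end{equation*}
Here the term $(\partial_t A_\e^t)$ arises because $a=a(t,\mx)$ depends on $t$; it is controlled by $\|\partial_t a\|_{L^\infty}\|\nabla u_\e\|_{L^2(\Omega)}^2$, and $C^1(0,T;L^\infty)$-regularity of $a$ together with the ellipticity \eqref{ellip} makes this a lower-order perturbation absorbable by Gronwall. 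Since $g\ge 0$ (indeed $g>0$), the damping term is nonnegative and may simply be dropped for (i) and (ii); using $\int_\Omega f_\e\partial_t u_\e \le \frac12\|f_\e\|_{L^2(\Omega)}^2 + \frac12\|\partial_t u_\e\|_{L^2(\Omega)}^2$, the boundedness of $(v_\e^0)$ in $H^1_0(\Omega)$, of $(v_\e^1)$ in $L^2(\Omega)$ and of $(f_\e)$ in $L^2(\Omega\times I)$, and Gronwall's inequality, I obtain (i) and (ii) at once. Note the best-constant hypothesis $C_\ast\le 2\lambda/C_\square$ in the case $r=2$ plays no role yet — it enters later (in the homogenization/cell-problem analysis, via a coercivity check on the $s$-averaged problem), not in these uniform estimates.

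For (iii), instead of discarding the damping term I would keep it and retain only the singular part: $\int_\Omega g(\mt)|\partial_t u_\e|^2 \ge C_\ast\,\e^{-r} t\,\|\partial_t u_\e(t)\|_{L^2(\Omega)}^2 - \|g_{\rm per}\|_{L^\infty}\|\partial_t u_\e(t)\|_{L^2(\Omega)}^2$. Integrating the energy identity over $(0,T)$, the boundary terms at $t=0,T$ and the $\partial_t A_\e^t$ term are already controlled by (i),(ii), the $\|g_{\rm per}\|_{L^\infty}\|\partial_t u_\e\|^2$ piece is bounded by (ii), so $C_\ast\e^{-r}\int_0^T t\,\|\partial_t u_\e(t)\|_{L^2(\Omega)}^2\,dt \le C$, which is exactly (iii). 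Assertion (iv) is immediate from the equation read as $\partial_{tt}^2 u_\e + g(\mt)\partial_t u_\e = f_\e + \dv(a(t,\mx)\nabla u_\e)$ in $H^{-1}(\Omega)$: the right side is bounded in $L^2(I;H^{-1}(\Omega))$ by (i) (for the divergence term, using $\|a\|_{L^\infty}\le 1$) and by Assumption (A)(i) for $f_\e$.

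The delicate points are (v) and (vi), which must split $\partial_{tt}^2 u_\e$ and $g(\mt)\partial_t u_\e$ — bounded only \emph{together} by (iv) — into individually bounded pieces, and this is possible only away from $t=0$ when $C_\ast\neq 0$. The idea is: on $I_\sigma$ we have $g(\mt)=g_{\rm per}(\mt)+C_\ast\e^{-r}t \ge C_\ast\e^{-r}\sigma - \|g_{\rm per}\|_{L^\infty}$, which for $\e$ small is comparable to $\e^{-r}t$; combining this lower bound with the $L^2(\Omega\times I)$-bound of $(\sqrt{t\e^{-r}}\,\partial_t u_\e)$ from (iii) gives, for $t\in I_\sigma$,
\begin{equation*}
\|g(\mt)\partial_t u_\e(t)\|_{H^{-1}(\Omega)}
\le C\,\e^{-r} t\,\|\partial_t u_\e(t)\|_{L^2(\Omega)}
\le C\sqrt{\tfrac{\e^{-r}t}{\sigma}}\cdot \sqrt{\e^{-r}t}\,\|\partial_t u_\e(t)\|_{L^2(\Omega)},
\end{equation*}
whose square integrates over $I_\sigma$ to something controlled by $\e^{-r}\int_0^T t\|\partial_t u_\e\|^2 \le C$ — hence (vi) for $g(\mt)\partial_t u_\e$; actually (vi) asks precisely for the bound on $t\e^{-r}\partial_t u_\e$ in $L^2(I_\sigma;H^{-1})$, which follows by the same computation using $\|\cdot\|_{H^{-1}(\Omega)}\le C\|\cdot\|_{L^2(\Omega)}$ and (iii). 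Then (v) follows by subtraction: $\partial_{tt}^2 u_\e = (\partial_{tt}^2 u_\e + g(\mt)\partial_t u_\e) - g(\mt)\partial_t u_\e$, bounded in $L^2(I_\sigma;H^{-1}(\Omega))$ by (iv) and (vi); when $C_\ast=0$ the term $g(\mt)\partial_t u_\e=g_{\rm per}(\mt)\partial_t u_\e$ is globally bounded in $L^2(I;L^2(\Omega))$ by (ii), so (v) holds on all of $I$. The main obstacle is precisely this splitting: getting the weighted estimate (iii) with the correct power of $\e$ and then leveraging it, together with the degenerate-at-$t=0$ lower bound on $g(\mt)$, to peel the two singular terms apart on $I_\sigma$ — this is where the quasi-periodic structure $g=g_{\rm per}+C_\ast s$ is used essentially, and care is needed so that all constants are independent of both $\e$ and the particular subsequence.
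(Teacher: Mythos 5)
Your treatment of (i)--(iv) is fine and essentially coincides with the paper's proof: test \eqref{weakform} by $\partial_t u_\e$, use the symmetry of $a$ and $a\in C^1(0,T;L^\infty)$ to handle the term coming from the $t$-dependence of $A^t_\e$, keep the nonnegative damping term to extract (iii), apply Gronwall, and read (iv) off the equation. (Also correct that $C_\ast\le 2\lambda/C_\square$ is not needed here; and, as in the paper, one should add the caveat that testing by $\partial_t u_\e$ is only formal and must be carried out on Galerkin approximations.)

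However, your argument for (vi) --- and hence for (v) when $C_\ast\neq 0$, since you obtain (v) by subtracting (vi) from (iv) --- has a genuine gap. You estimate
$\|t\e^{-r}\partial_t u_\e(t)\|_{H^{-1}}\le C\sqrt{\e^{-r}t/\sigma}\cdot\sqrt{\e^{-r}t}\,\|\partial_t u_\e(t)\|_{L^2}$
and claim the square integrates to something controlled by (iii). Squaring gives the integrand $\tfrac{C^2}{\sigma}(\e^{-r}t)^2\|\partial_t u_\e(t)\|_{L^2}^2$, which carries one extra factor $\e^{-r}t$ compared with the quantity $\int_0^T\e^{-r}t\,\|\partial_t u_\e(t)\|_{L^2}^2\,dt\le C$ bounded in (iii); since $\e^{-r}t\to+\infty$ on $I_\sigma$ as $\e\to0_+$, the estimate does not close and no $\e$-independent bound follows. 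In other words, (iii) is an $L^2$-type bound on $\sqrt{t\e^{-r}}\,\partial_t u_\e$, whereas (vi) demands a bound on $t\e^{-r}\partial_t u_\e$, one full power of the singular weight more; interpolation with constants blowing up like $\e^{-r/2}$ cannot bridge that. The paper closes this gap by working directly in $H^{-1}$ with the weight $t$: using (iv) (together with (ii) to absorb $g_{\rm per}\partial_t u_\e$) one knows $\partial_{tt}^2u_\e+C_\ast\mt\,\partial_t u_\e$ is bounded in $L^2(I;H^{-1}(\Omega))$, and then one expands
\begin{align*}
\int_0^T t\Bigl(\|\partial_{tt}^2u_\e\|_{H^{-1}}^2+\|C_\ast\mt\,\partial_t u_\e\|_{H^{-1}}^2\Bigr)dt
=\int_0^T t\,\|\partial_{tt}^2u_\e+C_\ast\mt\,\partial_t u_\e\|_{H^{-1}}^2\,dt
-C_\ast\int_0^T \frac{t^2}{\e^{r}}\frac{d}{dt}\|\partial_t u_\e\|_{H^{-1}}^2\,dt,
\end{align*}
integrates the last term by parts in time, discards the boundary term at $t=T$ (it has a favorable sign), and bounds the remaining term $2C_\ast\int_0^T \e^{-r}t\,\|\partial_t u_\e\|_{H^{-1}}^2\,dt$ by (iii), since $\|\cdot\|_{H^{-1}}\le C\|\cdot\|_{L^2}$. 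Dividing the resulting $t$-weighted bound by $\sigma$ yields both (v) and (vi) on $I_\sigma$. This integration-by-parts step, which exploits the cross term rather than estimating the two singular pieces separately, is the missing idea in your proposal; the case $C_\ast=0$ of (v) you handle correctly.
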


\begin{proof}
Recall \eqref{weakform}, i.e., 
\begin{align*}
\langle \partial_{tt}^2u_{\e}(t), \phi\rangle_{H^1_0(\Omega)}+A_{\e}^t(u_{\e}(t),\phi)+\langle g(\mt)\partial_t u_{\e}(t), \phi\rangle_{H^1_0(\Omega)}
=\langle f_{\e}(t), \phi\rangle_{H^1_0(\Omega)}
\end{align*}
for all $\phi\in H^1_0(\Omega)$.
Testing it by $\partial_t u_{\e}$ (see Remark \ref{test reg} below), we deduce by the symmetry of $a(t,y)$ that
\begin{align}
\lefteqn{\int_{\Omega}a(t,\mx)\nabla u_{\e}(x,t)\cdot \nabla \partial_t u_{\e}(x,t)\, dx}\label{Herm-mainterm}\\
&=
\frac{1}{2}\frac{d}{dt}\int_{\Omega}a(t,\mx)\nabla u_{\e}(x,t)\cdot \nabla u_{\e}(x,t)\, dx
-
\frac{1}{2}\int_{\Omega}\partial_t a(t,\mx)\nabla u_{\e}(x,t)\cdot \nabla u_{\e}(x,t)\, dx\nonumber
\end{align}
a.e.~in $I$. Thus we have
\begin{align}
\lefteqn{\frac{1}{2}\int_0^s\frac{d}{dt}\int_{\Omega}\Bigl[|\partial_t u_{\e}(x,t)|^2+a\left(t,\mx\right)\nabla u_{\e}(x,t)\cdot \nabla u_{\e}(x,t)\Bigl]\, dxdt}\label{re-weak}\\
&\stackrel{\eqref{Herm-mainterm}}{=}
\frac{1}{2}\int_0^s\int_{\Omega}\partial_t a(t,\mx)\nabla u_{\e}(x,t)\cdot \nabla u_{\e}(x,t)\, dxdt\nonumber
\\
&\quad+
\int_0^s\int_{\Omega}f_{\e}(x,t) \partial_tu_{\e}(x,t)\, dxdt-\int_0^s\Bigl(g_{\rm per}(\tfrac{t}{\e^r})+C_{\ast}
\tfrac{t}{\e^r}\Bigl)\|\partial_tu_{\e}(t)\|_{L^2(\Omega)}^2\, dt\nonumber
\end{align}
for all $s\in I$.
Then we observe from the uniform ellipticity \eqref{ellip}, \eqref{re-weak} and {\bf (A)} that  
\begin{align}
&\|\partial_t u_{\e}(s)\|_{L^2(\Omega)}^2+\lambda\|u_{\e}(s)\|_{H^1_0(\Omega)}^2\nonumber\\
&\stackrel{\eqref{ellip}}{\le}\|v^1_\e\|_{L^2(\Omega)}^2+\|v^0_\e\|_{H^1_0(\Omega)}^2+\int_0^s\frac{d}{dt}\left(\|\partial_t u_{\e}(t)\|_{L^2(\Omega)}^2+\int_{\Omega}a(t,\mx)\nabla u_{\e}(x,t)\cdot\nabla u_{\e}(x,t)\, dx\right)\, dt\nonumber\\
&\stackrel{\eqref{re-weak}}{=}\|v^1_\e\|_{L^2(\Omega)}^2+\|v^0_\e\|_{H^1_0(\Omega)}^2
+\int_0^s\int_{\Omega}\partial_t a(t,\mx)\nabla u_{\e}(x,t)\cdot \nabla u_{\e}(x,t)\, dxdt
\nonumber\\
&\quad+2\left( \int_0^s\int_{\Omega}f_{\e}(x,t) \partial_t u_{\e}(x,t)\, dxdt-\int_0^s\Bigl(g_{\rm per}(\mt)+C_{\ast}\tfrac{t}{\e^r}\Bigl)\|\partial_tu_{\e}(t)\|_{L^2(\Omega)}^2\, dt\right)\nonumber\\
&\stackrel{{\bf(A)}}{\le} \|v^1_\e\|_{L^2(\Omega)}^2+\|v^0_\e\|_{H^1_0(\Omega)}^2+\sup_{t\in I}\|\partial_t a(t)\|_{L^\infty(\square)}\int_0^s\|u_\e(t)\|_{H^1_0(\Omega)}^2\, dt\nonumber\\
&\quad+2 \int_0^s\left[\|f_{\e}(t)\|_{L^2(\Omega)}\| \partial_t u_{\e}(t)\|_{L^2(\Omega)}+\Bigl(\beta-C_{\ast}\tfrac{ t}{\e^r}\Bigl)\|\partial_tu_{\e}(t)\|_{L^2(\Omega)}^2\right]\, dt\nonumber\\
&\le\left(\|v^1_\e\|_{L^2(\Omega)}^2+\|v^0_\e\|_{H^1_0(\Omega)}^2+\|f_{\e}\|_{L^2(\Omega\times I)}^2\right)\nonumber\\
&\quad +C_{\beta}\int_0^s\left(\|\partial_tu_{\e}(t)\|_{L^2(\Omega)}^2+\|u_\e(t)\|_{H^1_0(\Omega)}^2\right)\, dt-C_\ast\int_0^s\|\sqrt{t\e^{-r}}\partial_tu_{\e}(t)\|_{L^2(\Omega)}^2\, dt.\nonumber
\end{align}
Here $\beta=\max_{s\in [0,1]}|g_{\rm per}(s)|$. From the boundedness of $(f_{\e})$ in $L^2(\Omega\times I)$, we get
\begin{align}
&\|\partial_t u_{\e}(s)\|_{L^2(\Omega)}^2+\lambda\|u_{\e}(s)\|_{H^1_0(\Omega)}^2+C_\ast\int_0^s\|\sqrt{t\e^{-r}}\partial_tu_{\e}(t)\|_{L^2(\Omega)}^2\, dt\label{bdd1}\\
&\quad \le C+C_{\beta}\int_0^s\left(\|\partial_tu_{\e}(t)\|_{L^2(\Omega)}^2+\|u_\e(t)\|_{H^1_0(\Omega)}^2\right)\, dt,\nonumber
\end{align}
which together with Gronwall's inequality yields (i) and (ii). 
Moreover, (iii) also follows from (i), (ii) and \eqref{bdd1}.
We next prove (iv). For any $\phi\in H^1_0(\Omega)$, the weak form \eqref{weakform} yields
\begin{align}
| \langle \partial_{tt}^2u_{\e}(t)+g(\mt)\partial_tu_\e(t), \phi\rangle_{H^1_0(\Omega)} |
\le	\|\phi\|_{H^1_0(\Omega)}\left(\|f_{\e}(t)\|_{H^{-1}(\Omega)}+\|u_{\e}(t)\|_{H^1_0(\Omega)}\right).\label{lem3.1(iv)}
\end{align}
Here we used the fact that
\begin{equation*}\label{ray}
|a(t,y)\xi\cdot \zeta|\le |\xi| |\zeta|
\quad \text{ for all $\xi,\zeta\in \R^N$ and a.e.~$(t,y)\in I\times\R^N$},
\end{equation*}
which follows from the Rayleigh-Ritz variational principle. By the boundedness of $(f_{\e})$ in $L^2(I;H^{-1}(\Omega))$ together with (i) and \eqref{lem3.1(iv)}, we deduce that 
\begin{align*}
\lefteqn{\int_0^T\|\partial_{tt}^2u_{\e}(t)+g(\mt)\partial_tu_\e(t)\|^2_{H^{-1}(\Omega)}\, dt}\\
&\stackrel{\eqref{lem3.1(iv)}}{\le}
\int_0^T\left(\|f_{\e}(t)\|_{H^{-1}(\Omega)}+\|u_{\e}(t)\|_{H^1_0(\Omega)}\right)^2\, dt
\le
2\left(\|f_{\e}\|_{L^2(I;H^{-1}(\Omega))}^2+\|u_{\e}\|_{L^2(I;H^1_0(\Omega))}^2\right),
\end{align*}
which implies that (iv) holds true. Here noting that 
\begin{align*}
\lefteqn{
\|\partial_{tt}^2u_{\e}+C_{\ast}t\e^{-r}\partial_{t}u_{\e}\|_{L^2(I;H^{-1}(\Omega))}}\\
&\quad \le
\|\partial_{tt}^2u_{\e}+g(\mt)\partial_{t}u_{\e}\|_{L^2(I;H^{-1}(\Omega))}
+\beta\|\partial_tu_\e\|_{L^2(I;H^{-1}(\Omega))}<C,
\end{align*}
we get (v) if $C_{\ast}=0$. As for $C_\ast\neq 0$, we infer that 
\begin{align}
\lefteqn{\int_0^Tt\left(\|\partial_{tt}^2u_\e(t)\|_{H^{-1}(\Omega)}^2+\| C_\ast \mt\partial_t u_\e(t)\|_{H^{-1}(\Omega)}^2\right)\, dt}\label{t-indepbdd}\\
&=
\int_0^Tt\left(\|\partial_{tt}^2u_\e(t)+ C_\ast \mt\partial_t u_\e(t)\|_{H^{-1}(\Omega)}^2
-2\bigl(\partial_{tt}^2u_\e(t),  C_\ast \mt\partial_t u_\e(t)\bigl)_{H^{-1}(\Omega)}\right)\, dt\nonumber\\
&\le 
CT-\int_0^T C_\ast\frac{t^2}{\e^r}\frac{d}{dt}\|\partial_tu_\e(t)\|_{H^{-1}(\Omega)}^2\, dt\nonumber\\
&=
CT-C_\ast T^2\|\sqrt{\e^{-r}}\partial_tu_\e(T)\|_{H^{-1}(\Omega)}^2
+2C_\ast
\int_0^T \|\sqrt{t\e^{-r}}\partial_tu_\e(t)\|_{H^{-1}(\Omega)}^2\, dt\le C.\nonumber
\end{align}
Hence we have
\begin{align}
\lefteqn{
\int_\sigma^T\left(\|\partial_{tt}^2u_\e(t)\|_{H^{-1}(\Omega)}^2+\|\mt C_\ast\partial_t u_\e(t)\|_{H^{-1}(\Omega)}^2\right)\, dt}
\label{t-indepbdd2}\\
&\le \frac{1}{\sigma}\int_0^Tt\left(\|\partial_{tt}^2u_\e(t)\|_{H^{-1}(\Omega)}^2+\|\mt C_\ast\partial_t u_\e(t)\|_{H^{-1}(\Omega)}^2\right)\, dt\le \frac{C}{\sigma}\nonumber
\end{align}
for all $0<\sigma<T$, which implies (v) and (vi).

\end{proof}

\begin{rmk}\label{test reg}
\rm
The argument mentioned above is not fully rigorous in view of the regularity of weak solutions. 
However, an approximate solution constructed by the Galerkin method in Theorem \ref{well-posedness} satisfies all the estimates and (weak) lower semicontinuity of norms assures the assertions.
\end{rmk}

Moreover, for smooth data, we have
\begin{lem}[Uniform estimates with smooth data]\label{bdd2}
Suppose that 
$a=a(y)$, $v_\e^0$, $v_\e^1$, $a(y)$, $g(s)$ and $f_\e$ are smooth, $(-\dv (a(\mx)\nabla v_\e^0))$, $(v_\e^1)$, 
$(f_\e)$ and $(\partial_t f_\e)$ are bounded in  $L^2(\Omega)$, $H^1_0(\Omega)$, $L^{\infty}(I;L^2(\Omega))$ and $L^2(\Omega\times I)$, respectively. Let $u_{\e}\in L^\infty(I;H^1_0(\Omega))$ be the unique weak solution to \eqref{DW}. Assume that {\bf (A)} holds. Let $I_{\sigma}=(\sigma,T)$ for any $\sigma>0$. Then the following {\rm(i)-(v)} hold\/{\rm :}
\begin{enumerate}
\item[(i)]
$(-\dv (a(\mx)\nabla u_\e))$ is bounded in $L^{\infty}(I;L^2(\Omega))$,
\item[(ii)]
$(\partial_tu_{\e})$ is bounded in $L^{\infty}(I;H^1_0(\Omega))$,
\item[(iii)]
$(\partial_{tt}^2u_{\e}+g(\mt)\partial_{t}u_{\e})$ is bounded in $L^{\infty}(I;L^2(\Omega))$,
\item[(iv)]
$(\partial_{tt}^2u_{\e})$ is bounded in $L^2(\Omega\times I_\sigma)$,
\item[(v)]
$(t\e^{-r}\partial_{t}u_{\e})$ is bounded in $L^2(\Omega\times I_\sigma)$, provided that $C_{\ast}\neq 0$.
\end{enumerate}
\end{lem}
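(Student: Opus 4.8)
The plan is to upgrade the first--order energy bounds of Lemma \ref{bdd} to second--order ones, using crucially that $a=a(y)$ is now independent of $t$: the bilinear form $A_\e^t$ of Definition \ref{sol} no longer depends on $t$ (write $A_\e$ for it) and the associated elliptic operator $L_\e:=-\dv(a(\mx)\nabla\,\cdot\,)$ commutes with $\partial_t$. All manipulations below are carried out on the smooth, finite--dimensional Galerkin approximants of Theorem \ref{well-posedness} (e.g.\ built on the eigenbasis of $L_\e$), for which testing with $L_\e\partial_tu_\e$ and the integrations by parts in $t$ are legitimate; the assertions for $u_\e$ then follow by weak(-$\ast$) lower semicontinuity of norms, exactly as in Remark \ref{test reg}. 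Put $\beta:=\max_{[0,1]}|g_{\rm per}|$, $\beta':=\max_{[0,1]}|g_{\rm per}'|$, and abbreviate $\|\cdot\|:=\|\cdot\|_{L^2(\Omega)}$.

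For (i) and (ii) I would multiply the equation in \eqref{DW} by $L_\e\partial_tu_\e$ and integrate over $\Omega$. Since $L_\e\partial_tu_\e=\partial_t(L_\e u_\e)$ and $A_\e$ is symmetric and $t$--independent, this yields
\begin{equation*}
\tfrac12\tfrac{d}{dt}\Bigl[A_\e(\partial_tu_\e,\partial_tu_\e)+\|L_\e u_\e\|^2\Bigr]+g(\mt)\,A_\e(\partial_tu_\e,\partial_tu_\e)=\tfrac{d}{dt}\int_\Omega f_\e\,L_\e u_\e\,dx-\int_\Omega\partial_tf_\e\,L_\e u_\e\,dx .
\end{equation*}
Because $g(\mt)>0$ and $A_\e(\partial_tu_\e,\partial_tu_\e)\ge0$, the second term on the left may be dropped. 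Integrating in $t$, using $\partial_tu_\e(0)=v_\e^1$ and $L_\e u_\e(0)=-\dv(a(\mx)\nabla v_\e^0)$ together with the hypotheses that $(v_\e^1)$ is bounded in $H^1_0(\Omega)$ and $(-\dv(a(\mx)\nabla v_\e^0))$ in $L^2(\Omega)$, absorbing the endpoint term $\int_\Omega f_\e(s)L_\e u_\e(s)\,dx$ via Young's inequality, and estimating $\int_0^s\!\int_\Omega\partial_tf_\e\,L_\e u_\e\,dx\,dt$ by $\tfrac12\|\partial_tf_\e\|_{L^2(\Omega\times I)}^2+\tfrac12\int_0^s\|L_\e u_\e(t)\|^2\,dt$, Gronwall's inequality gives a uniform bound for $A_\e(\partial_tu_\e(s),\partial_tu_\e(s))+\|L_\e u_\e(s)\|^2$ on $I$. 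By \eqref{ellip} this is (i) and (ii).

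Estimate (iii) follows at once, since \eqref{DW} gives $\partial_{tt}^2u_\e+g(\mt)\partial_tu_\e=f_\e-L_\e u_\e$, whose right--hand side is bounded in $L^\infty(I;L^2(\Omega))$ by {\bf(A)} and (i). For (iv) and (v) I would insert the weight $t$ as in \eqref{t-indepbdd}: from
\begin{equation*}
t\|\partial_{tt}^2u_\e\|^2+t\,g(\mt)^2\|\partial_tu_\e\|^2=t\|\partial_{tt}^2u_\e+g(\mt)\partial_tu_\e\|^2-t\,g(\mt)\tfrac{d}{dt}\|\partial_tu_\e\|^2,
\end{equation*}
integrating over $(0,T)$ and integrating the last term by parts in $t$, the boundary term at $t=T$ equals $-Tg(T\e^{-r})\|\partial_tu_\e(T)\|^2\le0$, while the bulk term is $\int_0^T\bigl(tg(\mt)\bigr)'\|\partial_tu_\e\|^2\,dt$ with $\bigl(tg(\mt)\bigr)'=g_{\rm per}(\mt)+t\e^{-r}g_{\rm per}'(\mt)+2C_\ast t\e^{-r}\le\beta+(\beta'+2C_\ast)\,t\e^{-r}$; hence it is dominated by $\beta\int_0^T\|\partial_tu_\e\|^2\,dt+(\beta'+2C_\ast)\int_0^T\|\sqrt{t\e^{-r}}\,\partial_tu_\e\|^2\,dt$, which is finite by Lemma \ref{bdd}(ii),(iii). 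Together with (iii) this gives $\int_0^Tt\bigl(\|\partial_{tt}^2u_\e\|^2+g(\mt)^2\|\partial_tu_\e\|^2\bigr)\,dt\le C$; dividing by $\sigma$ on $I_\sigma$ proves (iv), and the elementary bound $g(\mt)^2\ge\tfrac12C_\ast^2(t\e^{-r})^2-\beta^2$ combined with Lemma \ref{bdd}(ii) proves (v). When $C_\ast=0$ this is easier, since $g=g_{\rm per}$ is bounded, so $g(\mt)\partial_tu_\e$ is already bounded in $L^2(\Omega\times I)$ by Lemma \ref{bdd}(ii) and (iv) follows directly from the equation and (i).

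The only genuinely delicate point is the bulk term $\int_0^T(tg(\mt))'\|\partial_tu_\e\|^2\,dt$ above: it carries the singular factor $\e^{-r}$, and it is precisely the weighted dissipation bound of Lemma \ref{bdd}(iii) --- itself coming from the favourable sign of $-\int_0^sC_\ast t\e^{-r}\|\partial_tu_\e\|^2\,dt$ in the basic energy identity \eqref{bdd1} --- that renders it harmless. Apart from this, the only technicality is the usual regularity issue in testing with $L_\e\partial_tu_\e$, settled on the Galerkin level as recalled in Remark \ref{test reg}.
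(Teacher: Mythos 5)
Your proof is correct and follows essentially the same route as the paper: you test with $-\dv\bigl(a(\mx)\nabla \partial_t u_\e\bigl)$ (your $L_\e\partial_t u_\e$) on the Galerkin level, integrate the forcing term by parts in time and apply Gronwall to get (i)--(ii), read (iii) off the equation, and then run the $t$-weighted integration-by-parts argument of \eqref{t-indepbdd}--\eqref{t-indepbdd2} in $L^2(\Omega)$, using the weighted dissipation bound of Lemma \ref{bdd}(iii), to get (iv)--(v). The only cosmetic difference is that in (iv)--(v) you keep $g$ whole (so a harmless $g_{\rm per}'$ term appears, again absorbed via Lemma \ref{bdd}(ii)--(iii)), whereas the paper isolates the singular part $C_\ast t\e^{-r}\partial_t u_\e$ first and thereby avoids differentiating $g_{\rm per}$.
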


\begin{proof}
Test \eqref{weakform} by $-\dv (a(\mx)\nabla \partial_t u_\e)$. Then we observe that
\begin{align*}
\lefteqn{\int_0^s\int_{\Omega}\partial_{tt}^2u_\e(x,t)\Bigl(-\dv \bigl(a(\mx)\nabla \partial_t u_\e(x,t)\bigl)\Bigl)\, dxdt}\\
&=
\int_0^s\int_{\Omega}\partial_t\bigl(\nabla \partial_t u_\e(x,t)\bigl)\cdot a(\mx)\nabla \partial_tu_\e(x,t)\, dxdt\\
&=
\frac{1}{2}\int_0^s\frac{d}{dt}\left(\int_{\Omega}a(\mx)\nabla\partial_tu_\e(x,t)\cdot\nabla \partial_tu_\e(x,t)\, dx\right)dt
\stackrel{\eqref{ellip}}{\ge}
\frac{\lambda}{2}\|\partial_tu_\e(s)\|_{H^1_0(\Omega)}^2-\frac{1}{2}\|v^1_\e\|_{H^1_0(\Omega)}^2
\end{align*}
and 
\begin{align*}
\lefteqn{\int_0^s\int_\Omega\left(-\dv \bigl(a(\mx)\nabla u_\e(x,t)\bigl)\right)\left(-\dv \bigl(a(\mx)\nabla \partial_tu_\e(x,t)\bigl)\right)\, dxdt}\\
&\quad=
\frac{1}{2}\|-\dv (a(\mx)\nabla u_\e(s))\|_{L^2(\Omega)}^2-\frac{1}{2}\|-\dv (a(\mx)\nabla v_\e^0)\|_{L^2(\Omega)}^2
\end{align*}
for all $s\in I$. Hence we derive that
\begin{align*}
\lefteqn{\frac{\lambda}{2}\|\partial_tu_\e(s)\|_{H^1_0(\Omega)}^2+\frac{1}{2}\|-\dv (a(\mx)\nabla u_\e(s))\|_{L^2(\Omega)}^2}\\
&\le
\frac{1}{2}\|v^1_\e\|_{H^1_0(\Omega)}^2+\frac{1}{2}\|-\dv (a(\mx)\nabla v^0_\e)\|_{L^2(\Omega)}^2\\
&\quad+
\int_{\Omega}f_\e(x,s)\Bigl(-\dv (a(\mx)\nabla u_\e(x,s))\Bigl)\, dx-\int_{\Omega} f_\e(x,0)\Bigl(-\dv (a(\mx)\nabla v_\e^0 (x) )\Bigl)\, dx\\
&\quad-
\int_0^s\int_{\Omega}\partial_t f_\e(x,t)\Bigl(-\dv (a(\mx)\nabla u_\e(x,t))\Bigl)\, dxdt\\
&\quad-
\int_0^s\int_{\Omega}g(\mt)\nabla \partial_t u_\e(x,t)\cdot a(\mx)\nabla \partial_tu_\e(x,t)\, dxdt\\
&\le
\frac{1}{2}\|v^1_\e\|_{H^1_0(\Omega)}^2+\|-\dv (a(\mx)\nabla v^0_\e)\|_{L^2(\Omega)}^2
\\
&\quad+
\|f_\e(s)\|_{L^2(\Omega)}^2+\frac{1}{4}\|-\dv (a(\mx)\nabla u_\e(s))\|_{L^2(\Omega)}^2
+\frac{1}{2}\|f_\e(0)\|_{L^2(\Omega)}^2\\
&\quad 
+\frac{1}{2}\|\partial_tf_\e\|_{L^2(\Omega\times I)}^2+\frac{1}{2}\int_0^s\|-\dv (a(\mx)\nabla u_\e(t))\|_{L^2(\Omega)}^2\, dt-\underbrace{\int_0^s\lambda g(\mt)\|\nabla\partial_t u_\e(t)\|_{L^2(\Omega)}^2\, dt}_{\ge 0}\\
&\le 
C+
\frac{1}{4}\|-\dv (a(\mx)\nabla u_\e(s))\|_{L^2(\Omega)}^2
+\frac{1}{2}\int_0^s\|-\dv (a(\mx)\nabla u_\e(t))\|_{L^2(\Omega)}^2\, dt,
\end{align*}
which together with Gronwall's inequality yields (i) and (ii). Hence one can derive that
\begin{equation}
\label{u1zero2}
\|\partial_{tt}^2u_\e+g(\mt)\partial_tu_{\e}\|_{L^{\infty}(I;L^2(\Omega))}\le
\|f_\e\|_{L^{\infty}(I;L^2(\Omega))}+\|-\dv(a(\mx)\nabla u_\e)\|_{L^{\infty}(I;L^2(\Omega))},
\end{equation}
which implies (iii). As in the proofs of \eqref{t-indepbdd} and \eqref{t-indepbdd2}, (iv) and (v) follow from \eqref{u1zero2}.

\end{proof}
Applying Lemma \ref{bdd}, we next get the following 
\begin{lem}[Weak(-star) and strong convergences]\label{conv}
Let $u_{\e}\in L^{\infty}(I;H^1_0(\Omega))$ be the unique weak solution of \eqref{DW} under the same assumption as in Lemma \ref{bdd}. 
Then there exist a subsequence $(\e_n)$ of $(\e)$, $u_{0}\in L^{\infty}(I;H^1_0(\Omega))$, 
 $w\in L^2(I;H^{-1}(\Omega))$ and $h \in L^2_{\rm loc}((0,T];H^{-1}(\Omega))$ such that, for any $\sigma\in I$, 
\begin{align}
\label{conv1}
u_{\e_n}&\to u_{0}\quad &&\text{ weakly-}\ast \text{ in }\ L^{\infty}(I;H^1_0(\Omega)),\\
\label{conv2}
\partial_{t}u_{\e_n}&\to \partial_{t}u_{0}\quad &&\text{ weakly-}\ast \text{ in }\ L^{\infty}(I;L^2(\Omega)),\\
\label{conv9}
\partial_{tt}^2u_{\e_n}+g(\mnt)\partial_{t}u_{\e_n}&\to w \quad &&\text{ weakly in }\ L^2(I;H^{-1}(\Omega)), \\
\label{conv3}
\partial_{tt}^2u_{\e_n}&\to \partial_{tt}^2u_{0} \quad &&\text{ weakly in }\ 
\begin{cases}
L^2(I;H^{-1}(\Omega)) &\text{ if }\ C_{\ast}= 0,\\
L^2(I_\sigma;H^{-1}(\Omega)) &\text{ if }\ C_{\ast}\neq 0,
\end{cases}  \\
\label{conv8}	
t\e_n^{-r}\partial_{t}u_{\e_n}&\to h \quad &&\text{ weakly in }\ L^2(I_\sigma;H^{-1}(\Omega)) \hspace{8mm}\text{ if }\ C_{\ast}\neq 0, \\
\label{conv4}
u_{\e_n}&\to u_{0}\quad &&\text{ strongly in }\ C(\overline{I};L^2(\Omega)),\\
\label{conv5}
\partial_{t}u_{\e_n}&\to \partial_tu_{0}\quad &&\text{ strongly in }	
\begin{cases}
C(\overline{I};H^{-1}(\Omega)) &\text{ if }\ C_{\ast}= 0,\\
C(\overline{I}_\sigma;H^{-1}(\Omega)) &\text{ if }\ C_{\ast}\neq 0,
\end{cases}\\
\label{conv7}
\sqrt{t}\partial_tu_{\e_n}&\to 0\quad &&\text{ strongly in }\ L^2(\Omega\times I)\hspace{15mm} \text{ if }\ C_{\ast}\neq 0.
\end{align}
In particular, if $C_{\ast} \neq 0$, then $\partial_t u_0(\cdot,t)\equiv 0$ for a.e.~$t\in I$, and hence, $u_0$ is independent of $t\in I$, i.e.,~$u_0=u_0(x)$. Furthermore, there exists $w_1\in L^{2}(\Omega\times I; H^{1}_{\rm per}(\square\times J)/\R)$ such that 
\begin{align}
	 \partial_t u_{\e_n}  &\wtts \partial_t u_{0}+\partial_s  w_1  \quad &&\text{ in }\ L^2(\Omega\times I\times \square\times J), \label{conv6.5}\\
	a(t,\mnx)\nabla u_{\e_n}&\wtts a(t,y)(\nabla u_{0}+\nabla_y  w_1  ) \quad &&\text{ in }\ [L^2(\Omega\times I\times \square\times J)]^N.\label{conv5.5}
\end{align}
Thus it holds that
\begin{align}
a(t,\mnx)\nabla u_{\e_n}&\to \langle a(t,\cdot)(\nabla u_{0}+\nabla_y  w_1  )\rangle_{y,s} \quad \text{ weakly in }\ [L^2(\Omega\times I)]^N,\label{conv6}
\end{align}
where
$$
\bigl\langle a(t,\cdot)\bigl(\nabla u_0(x,t)+\nabla_y  w_1(x,t,\cdot,\cdot)  \bigl)\bigl\rangle_{y,s}
=
\int_{\square}a(t,y)\bigl(\nabla u_0(x,t)+\nabla_y  \langle w_1  (x,t,y,\cdot)\rangle_s\bigl)\, dy.
$$
\end{lem}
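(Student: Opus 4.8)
The plan is to extract convergent subsequences one after another from the uniform bounds of Lemma~\ref{bdd}, then to upgrade the weak limits to strong ones by Aubin--Lions--Simon compactness, and finally to identify the two-scale limit of the flux using Theorem~\ref{gradientcpt}. First I would invoke the reflexivity of $L^\infty(I;H^1_0(\Omega))$, $L^\infty(I;L^2(\Omega))$, $L^2(I;H^{-1}(\Omega))$, $L^2(I_\sigma;H^{-1}(\Omega))$ together with a diagonal argument over a sequence $\sigma=1/m\to 0_+$ to get a single subsequence $(\e_n)$ along which \eqref{conv1}, \eqref{conv2}, \eqref{conv9}, \eqref{conv3} and (when $C_\ast\neq 0$) \eqref{conv8} hold; here Lemma~\ref{bdd}(i)--(ii) gives \eqref{conv1}--\eqref{conv2}, (iv) gives \eqref{conv9}, (v) gives \eqref{conv3}, and (vi) gives \eqref{conv8}. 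The limits $u_0$, $w$, $h$ are thereby defined, and one checks $\partial_t u_{\e_n}\rightharpoonup\partial_t u_0$ and $\partial_{tt}^2 u_{\e_n}\rightharpoonup\partial_{tt}^2 u_0$ in the distributional sense so that the weak limit of $\partial_{tt}^2 u_{\e_n}$ really is $\partial_{tt}^2 u_0$ (not merely some $w'$).

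Next I would prove the strong convergences \eqref{conv4}--\eqref{conv5}. For \eqref{conv4}: $(u_{\e_n})$ is bounded in $L^\infty(I;H^1_0(\Omega))$ with $(\partial_t u_{\e_n})$ bounded in $L^\infty(I;L^2(\Omega))$, so by Aubin--Lions--Simon ($H^1_0\hookrightarrow\hookrightarrow L^2\hookrightarrow H^{-1}$, or directly $H^1_0\hookrightarrow\hookrightarrow L^2$ for the values and equicontinuity in $L^2$ from the $\partial_t$ bound) the embedding into $C(\overline I;L^2(\Omega))$ is compact, giving \eqref{conv4} after passing to a further subsequence (absorbed into $(\e_n)$). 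For \eqref{conv5} on $I_\sigma$ when $C_\ast\neq 0$: $(\partial_t u_{\e_n})$ is bounded in $L^\infty(I;L^2(\Omega))$ and $(\partial_{tt}^2 u_{\e_n})$ is bounded in $L^2(I_\sigma;H^{-1}(\Omega))$ by Lemma~\ref{bdd}(v), so Aubin--Lions gives $\partial_t u_{\e_n}\to\partial_t u_0$ in $C(\overline I_\sigma;H^{-1}(\Omega))$; when $C_\ast=0$ the same works on all of $I$. The assertion \eqref{conv7} follows from Lemma~\ref{bdd}(iii): $(\sqrt{t\e_n^{-r}}\,\partial_t u_{\e_n})$ is bounded in $L^2(\Omega\times I)$, hence $\|\sqrt t\,\partial_t u_{\e_n}\|_{L^2(\Omega\times I)}=\e_n^{r/2}\|\sqrt{t\e_n^{-r}}\,\partial_t u_{\e_n}\|_{L^2(\Omega\times I)}\le C\e_n^{r/2}\to 0$. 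Then $\sqrt t\,\partial_t u_0=0$ a.e., so $\partial_t u_0(\cdot,t)=0$ for a.e.\ $t\in I$ and $u_0=u_0(x)$; this uses no further input.

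For the two-scale statements I would apply Theorem~\ref{gradientcpt} to the bounded sequence $(u_{\e_n})$ in $W^{1,2}(\Omega\times I)$ (boundedness from (i)--(ii) of Lemma~\ref{bdd}), obtaining $w_1\in L^2(\Omega\times I;W^{1,2}_{\rm per}(\square\times J)/\R)$ with $\nabla_{t,x}u_{\e_n}\wtts\nabla_{t,x}u_0+\nabla_{s,y}w_1$; splitting coordinates gives \eqref{conv6.5} (the $\partial_t$ component) and $\nabla u_{\e_n}\wtts\nabla u_0+\nabla_y w_1$ (the spatial components). Since $a(t,\tfrac{x}{\e_n}\cdot)\to a(t,y)$ is a strongly two-scale convergent (admissible, by $C^1(I;L^\infty)$ regularity and Assumption (A)(ii)) multiplier and the product of a strongly two-scale convergent sequence with a weakly two-scale convergent one converges two-scale to the product of the limits, I get \eqref{conv5.5}; then \eqref{conv6} is Remark~\ref{indepwtts}(i), i.e.\ the weak $L^2(\Omega\times I)^N$ limit is the $(y,s)$-average of the two-scale limit, and the displayed formula for $\langle a(t,\cdot)(\nabla u_0+\nabla_y w_1)\rangle_{y,s}$ follows because $a$ is $s$-independent so only $\langle w_1(\cdot,\cdot,y,\cdot)\rangle_s$ enters.

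The main obstacle I anticipate is the careful handling of the two time scales near $t=0$ when $C_\ast\neq 0$: the bounds on $\partial_{tt}^2 u_{\e_n}$ and $t\e_n^{-r}\partial_t u_{\e_n}$ only hold on $I_\sigma$, so the strong convergence \eqref{conv5} and the identification \eqref{conv3} must be set up on the exhausting family $I_{1/m}$ with a diagonal extraction, and one must make sure the limit objects $h$ and $\partial_{tt}^2 u_0$ patch together consistently into elements of $L^2_{\rm loc}((0,T];H^{-1}(\Omega))$; a secondary technical point is verifying that $(t,x)\mapsto a(t,\tfrac{x}{\e_n})$ (equivalently $\Psi(x,t,y,s)=a(t,y)$) is an admissible test function in the sense of Definition~\ref{admissible}, which needs the $C^1$-in-$t$ regularity and uniform $L^\infty$ bound from (A)(ii), plus the classical mean-value property of Proposition~\ref{mean} applied entrywise.
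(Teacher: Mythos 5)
Your proposal is correct and takes essentially the same route as the paper: weak(-$\ast$) compactness from Lemma \ref{bdd} (with the diagonal extraction over $\sigma$), Ascoli--Arzel\`a/Aubin--Lions type compactness for \eqref{conv4}--\eqref{conv5}, the factor $\e_n^{r}$ from Lemma \ref{bdd}(iii) for \eqref{conv7}, and Theorem \ref{gradientcpt} together with the admissibility of $a(t,y)\Psi$ (your multiplier/product-rule argument is just the paper's transposition of $a$ onto the test function) to obtain \eqref{conv6.5}--\eqref{conv5.5}, with Remark \ref{indepwtts}(i) yielding \eqref{conv6}.
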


\begin{proof}
Thanks to Lemma \ref{bdd}, we readily obtain \eqref{conv1}-\eqref{conv8}. Furthermore, from (i) and (ii) of Lemma \ref{bdd}, the Asocili-Arzel\'a theorem yields \eqref{conv4}. In the same way, \eqref{conv5} also holds true by (ii) and (v) of Lemma \ref{bdd}. As for \eqref{conv7}, noting by (iii) of Lemma \ref{bdd} that
$$
\limsup_{\e_n\to 0_+}\|\sqrt{t}\partial_t u_{\e_n}\|_{L^2(\Omega\times I)}^2\le 
\limsup_{\e_n\to 0_+} C\e^{r}_n=0,
$$
we obtain \eqref{conv7}. Thus $u_0=u_0(x)$, provided that $C_{\ast}\neq 0$. We finally show  \eqref{conv6.5}, \eqref{conv5.5} and \eqref{conv6}. All the assumptions of Theorem \ref{gradientcpt} can be checked by (i) and (ii) of Lemma \ref{bdd}. 
Hence \eqref{conv6.5} holds true. Moreover, note that, for any $\Psi\in [L^2_{\rm per}(\square\times J;C_{\rm c}(\Omega\times I))]^N$, $\Psi$ and $a(t,y)\Psi$ are admissible test functions in $[L^2(\Omega\times I\times \square\times J)]^N$ (see \cite[Theorems 2 and 4]{LNW} for details) and define $\Xi\in [L^2(\Omega\times I\times \square\times J)]^N$ by
$$
a(t,\mnx)\nabla u_{\e_n} \wtts \Xi \quad \text{ in }\ [L^2(\Omega\times I\times \square\times J)]^N.
$$
Then, Theorem \ref{gradientcpt} yields that 
\begin{align*}
\lefteqn{\int_0^T\int_\Omega\int_0^1\int_\square\Xi(x,t,y,s)\cdot \Psi(x,t,y,s)\, dZ}\\
&\quad=
\lim_{\e_n\to 0_+}\int_0^T\int_{\Omega}\nabla u_{\e_n}(x,t)\cdot \tenchi a(t,\mnx)\Psi(x,t,\mnx,\mnt)\, dxdt\\
&\quad=
\int_0^T\int_{\Omega}\int_0^1\int_\square\bigl(\nabla u_{0}(x,t)+\nabla_y w_1(x,t,y,s)\bigl)\cdot \tenchi a(t,y)\Psi(x,t,y,s)\, dZ,
\end{align*}
which implies \eqref{conv5.5}, and hence, (i) of Remark \ref{indepwtts} yields \eqref{conv6}. This completes the proof.
\end{proof}

\section{Proof of Theorem \ref{HPthm}}
We first  derive   the homogenized equation by setting
\begin{align*}
j_{\rm hom}(x,t):=\Bigl\langle a(t,\cdot)\bigl(\nabla u_0(x,t)+\nabla_y  w_1 (x,t,\cdot,\cdot)\bigl)\Bigl\rangle_{y,s}.
\end{align*}
Recalling \eqref{conv9} and \eqref{conv6}, we observe that, for all $\phi\in H^1_0(\Omega)$ and $\psi\in C^{\infty}_{\rm c}(I)$,
\begin{align*}
\lefteqn{\int_0^T \langle f(t), \phi\rangle_{H^1_0(\Omega)}\psi(t)\, dt}\\
&=
\lim_{\e_n\to 0_+}\int_0^T \langle f_{\e_n}(t), \phi\rangle_{H^1_0(\Omega)}\psi(t)\, dt\\
&\stackrel{\eqref{weakform}}{=}
\lim_{\e_n\to 0_+}\int_0^T
\Bigl[
\langle \partial_{tt}^2u_{\e_n}(t)+g(\mnt)\partial_tu_{\e_n}(t),\phi\rangle_{H^1_0(\Omega)}
+\bigl(a(t,\mnx)\nabla u_{\e_n}(t),  \nabla \phi\bigl)_{L^2(\Omega)}\Bigl]\psi(t)\, dt\\
&\stackrel{\eqref{conv9}, \eqref{conv6}}{=}
\int_0^T
\Bigl[
\langle  w , \phi\rangle_{H^1_0(\Omega)}
+\bigl(j_{\rm hom}(t),  \nabla \phi\bigl)_{L^2(\Omega)}\Bigl]\psi(t)\, dt.\nonumber
\end{align*}
Here $w$ can be regarded as  
\begin{equation}
w=\partial_{tt}^2u_{0}+\langle g_{\rm per}\rangle_s\partial_t 
u_0+C_{\ast} h.\label{conv-w}
\end{equation}
Actually, due to $\psi\in C^{\infty}_{\rm c}(\Omega)$, this follows from \eqref{conv8}, \eqref{conv5} and Proposition \ref{mean}. Hence, by the arbitrariness of $\psi\in C^{\infty}_{c}(I)$, $u_0$ turns out to be a weak solution to
\begin{equation}\label{DWHP}
\left\{
\begin{aligned}
&\partial_{tt}^2u_{0}-\dv\, j_{\rm hom}+\langle g_{\rm per}\rangle_s\partial_tu_0+C_{\ast} h=f \quad\text{ in } \Omega\times I, \\
&u_{0}|_{\partial\Omega}=0 , \quad 	u_{0}|_{t=0}=v^0,\quad \partial_tu_{0}|_{t=0}= \tilde{v}^1, 
\end{aligned}
\right.
\end{equation}
where
$$
\tilde{v}^1=
\begin{cases}
v^1 &\text{ if }\ C_{\ast}=0,\\
0 &\text{ if }\ C_{\ast}\neq 0.
\end{cases}
$$
Indeed, noting that
\begin{align*}
\|u_0(0)-v^0\|_{L^2(\Omega)}
&\le
\|u_0(0)-u_{\e_n}(0)\|_{L^2(\Omega)}
+\|u_{\e_n}(0)-v^0\|_{L^2(\Omega)}\\
&\le
\|u_0-u_{\e_n}\|_{C(\overline{I};L^2(\Omega))}
+\|v_{\e_n}^0-v^0\|_{L^2(\Omega)},
\end{align*}
we see by \eqref{conv4} and {\bf (A)} that
\begin{align*}
\|u_0(0)-v^0\|_{L^2(\Omega)}
\le
\limsup_{\e_n\to 0_+}\|u_0-u_{\e_n}\|_{C(\overline{I};L^2(\Omega))}
+\limsup_{\e_n\to 0_+}\|v_{\e_n}^0-v^0\|_{L^2(\Omega)}=0,
\end{align*}
which implies that $u_0(x,0)=v^0$. Thus $u_0\equiv v^0$ by $\partial_t u_0\equiv 0$, provided that for $C_{\ast}\neq0$. To check $\partial_t u_0(x,0)=v^1(x)$ a.e.~in $\Omega$ for $C_{\ast}=0$, let $\psi\in C^{\infty}(I)$ be such that $\psi(T)=0$ and $\psi(0)=1$. Then we infer that, for all $\phi\in C^{\infty}_{\rm c}(\Omega)$,
\begin{align*}
\int_{\Omega} v^1(x)\phi(x)\, dx
&\stackrel{{\bf (A)},\, \eqref{weakform}}{=}
\lim_{\e_n\to 0_+}\int_{\Omega} v_{\e_n}^1(x)\phi(x)\, dx\\
&\quad+
\lim_{\e_n\to 0_+}\int_0^T\Bigl\langle
\partial_{tt}^2 u_{\e_n}(t)
+g(\mnt)\partial_t u_{\e_n}(t),\phi\Bigl\rangle_{H^1_0(\Omega)}\psi(t)\, dt\\
&\quad+
\lim_{\e_n\to 0_+}\int_0^T\int_{\Omega}\Bigl[ a(t,\mnx)\nabla u_{\e_n}(x,t)\cdot\nabla \phi(x)\psi(t)-f_{\e_n}(x,t)\phi(x)\psi(t)\Bigl]\, dxdt\\
&=
\lim_{\e_n\to 0_+}\int_0^T\int_{\Omega} \Bigl[-\partial_t u_{\e_n}(x,t)\phi(x)\partial_t\psi(t)+g(\mnt)\partial_t u_{\e_n}(x,t)\phi(x)\psi(t)\\
&\quad+
a(t,\mnx)\nabla u_{\e_n}(x,t)\cdot\nabla \phi(x)\psi(t)
-f_{\e_n}(x,t)\phi(x)\psi(t)\Bigl]\, dxdt\\
&=
\int_0^T\int_{\Omega} \Bigl[-\partial_t u_{0}(x,t)\phi(x)\partial_t\psi(t)+\langle g_{\rm per}\rangle_s\partial_tu_0(x,t)\phi(x)\psi(t)\\
&\quad+
j_{\rm hom}(x,t)\cdot\nabla \phi(x)\psi(t)-f(x,t)\phi(x)\psi(t)\Bigl]\, dxdt\\
&\stackrel{\eqref{DWHP}}{=}
\int_{\Omega} \partial_t u_0(x,0)\phi(x)\, dx,  
\end{align*}
which together with the arbitrariness of $\phi\in C^{\infty}_{\rm c}(\Omega)$ yields that $\partial_tu_0(x,0)=v^1(x)$ a.e.~in $\Omega$ for $C_{\ast}= 0$. 

The rest of the proof is to show that
\begin{equation}\label{rest}
j_{\rm hom}=a_{\rm hom}(t)\nabla u_0(x,t).
\end{equation}
Here $a_{\rm hom}(t)$ is the homogenized matrix defined by \eqref{a_hom}. Thus it suffices to prove \eqref{HPu1}, that is,
\begin{equation}
\langle w_1\rangle_{s}=u_1:=\sum_{k=1}^N\partial_{x_k}u_0(x,t)\Phi_k(t,y),\label{u1}
\end{equation}
where $\Phi_k$ is the corrector defined by either \eqref{CPslow} or \eqref{CPcritical}. 
Indeed, if \eqref{u1} holds, then we derive that
\begin{eqnarray*}
j_{\rm hom}(x,t)
&=&
\left\langle a(t,\cdot)\bigl(\nabla u_0(x,t)+\nabla_y w_1(x,t,\cdot,\cdot) \bigl)\right\rangle_{y,s}\\
&\stackrel{\eqref{u1}}{=}&
\int_{\square}a(t,y)\Bigl(\nabla u_0(x,t)+\sum_{k=1}^N\partial_{x_k}u_0(x,t)\nabla_y\Phi_k(t,y)\Bigl)\, dy\\
&=&
\sum_{k=1}^N\underbrace{\Bigl(\int_{\square}a(t,y)\left(\nabla_y\Phi_k(t,y)+e_k\right)\, dy\Bigl)}_{=a_{\rm hom}(t)e_k \text{ by \eqref{a_hom}}}\partial_{x_k}u_0(x,t)=a_{\rm hom}(t)\nabla u_0(x,t),					
\end{eqnarray*}
which implies \eqref{rest}. Hence $u_0$ turns out to be a unique weak solution to \eqref{HDW2}. 
Indeed, this follows from the uniqueness of the corrector $\Phi_k$ and the similar argument as in Theorem \ref{well-posedness} if $C_{\ast}=0$ and $u_0\equiv v^0$ whenever $C_{\ast}\neq 0$. Thus we have
$$
u_{\e} \to u_0\quad \text{ as }\ \e\to 0_+
$$
without taking any subsequence $(\e_n)$. 
Therefore, \eqref{HPconv1}--\eqref{HPconv3} hold by Lemma \ref{conv} and \eqref{conv-w}. Thus we get all the assertions. 

In the rest of this section, we shall prove  \eqref{u1}  for all $0<r<+\infty$. To this end, we show the following 
\begin{lem}\label{keylem}
Under the same assumption as in Theorem \ref{HPthm}, it holds that
\begin{align}
\lefteqn{	
\lim_{\e_n\to 0_+}\e_n^{1-r}\int_0^T\int_{\Omega}\Bigl[-\partial_t u_{\e_n}(x,t)	\partial_s c(\mnt)
+C_{\ast}t\partial_tu_{\e_n}(x,t)c(\mnt)\Bigl]\phi(x)b(\mnx)\psi(t)\, dxdt
}\label{key}\\
&\quad+
\lim_{\e_n\to 0_+}\int_0^T\int_{\Omega}	a(t,\mnx)\nabla u_{\e_n}(x,t)  \cdot   \phi(x)\nabla_y b(\mnx)\psi(t)c(\mnt)\, dxdt
=0\nonumber
\end{align}
for all $\phi\in C^{\infty}_c(\Omega)$, $b\in C^{\infty}_{\rm per}(\square)$, $\psi \in  C_{\rm c}^{\infty}(I)  $ and $c\in C^{\infty}_{\rm per}(J)$.
\end{lem}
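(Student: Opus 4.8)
The plan is to insert into the weak form \eqref{weakform} the oscillating test function $\e_n\phi(x)b(\mnx)\in C^\infty_c(\Omega)\subset H^1_0(\Omega)$, multiply by $\psi(t)c(\mnt)\in C^\infty_c(I)$, integrate over $t\in I$, integrate by parts once in time in the term containing $\partial_{tt}^2u_{\e_n}$ (the boundary contributions vanish since $\psi\in C^\infty_c(I)$), and then sort the resulting identity according to powers of $\e_n$. Using the product rules
$$
\nabla\bigl[\e_n\phi(x)b(\mnx)\bigr]=\e_n\nabla\phi(x)\,b(\mnx)+\phi(x)\nabla_yb(\mnx),\qquad \partial_t\bigl[\psi(t)c(\mnt)\bigr]=\partial_t\psi(t)\,c(\mnt)+\e_n^{-r}\psi(t)\,\partial_sc(\mnt),
$$
together with the splitting $g(s)=g_{\rm per}(s)+C_\ast s$ (so that $g(\mnt)=g_{\rm per}(\mnt)+C_\ast t\,\e_n^{-r}$), one is led to an identity of the form
$$
\e_n^{1-r}A_{\e_n}+B_{\e_n}=\e_nC_{\e_n},
$$
where $A_{\e_n}$ is exactly the integral multiplying $\e_n^{1-r}$ in \eqref{key} (it arises from the $\e_n^{-r}\psi\partial_sc$ term produced by the time integration by parts and from the $C_\ast t\e_n^{-r}$ part of $g$), $B_{\e_n}$ is exactly the second integral of \eqref{key} (it arises from the $\phi\nabla_yb(\mnx)$ part of $\nabla[\e_n\phi b(\mnx)]$, the factor $\e_n^{-1}$ cancelling the explicit $\e_n$), and $C_{\e_n}$ collects all the remaining contributions, built from $\partial_tu_{\e_n}$, $a(t,\mnx)\nabla u_{\e_n}$, $g_{\rm per}(\mnt)\partial_tu_{\e_n}$ and $f_{\e_n}$ paired against bounded smooth functions (so that $\e_nC_{\e_n}$ collects precisely the $\mathcal O(\e_n)$ terms).

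Next I would check that $\e_nC_{\e_n}\to0$ and that $B_{\e_n}$ converges. For the former, Lemma \ref{bdd}(i),(ii) gives that $(\nabla u_{\e_n})$ and $(\partial_tu_{\e_n})$ are bounded in $L^2(\Omega\times I)$, and by {\bf (A)} one has $\|a(t,\mnx)\|_{L^\infty}\le1$, $|g_{\rm per}|$ bounded, and $(f_{\e_n})$ bounded in $L^2(\Omega\times I)$; since $\phi,\nabla\phi,b,\psi,\partial_t\psi,c$ are all bounded, the Cauchy--Schwarz inequality yields $|C_{\e_n}|\le C$ uniformly in $n$, hence $\e_nC_{\e_n}\to0$. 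For the latter, by Lemma \ref{conv} we have $a(t,\mnx)\nabla u_{\e_n}\wtts a(t,y)(\nabla u_0+\nabla_yw_1)$ (this is \eqref{conv5.5}), and $\Psi(x,t,y,s):=\phi(x)\nabla_yb(y)\psi(t)c(s)$ is an admissible test function (cf.~\cite[Theorems 2 and 4]{LNW}), so that $B_{\e_n}=\int_0^T\int_\Omega a(t,\mnx)\nabla u_{\e_n}\cdot\Psi(x,t,\mnx,\mnt)\,dxdt$ converges to $B:=\int_0^T\int_\Omega\int_0^1\int_\square a(t,y)(\nabla u_0+\nabla_yw_1)\cdot\Psi\,dZ$. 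Consequently, writing $\e_n^{1-r}A_{\e_n}=\e_nC_{\e_n}-B_{\e_n}$ shows that $\lim_n\e_n^{1-r}A_{\e_n}=-B$ exists; hence both limits appearing in \eqref{key} exist and their sum equals $-B+B=0$, which is precisely \eqref{key}.

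The only delicate point is the bookkeeping of the powers of $\e_n$: one must verify that the $\e_n^{-r}$ coming from differentiating $c(\mnt)$ and the $\e_n^{-r}$ hidden in the singular part of $g$ combine, after the overall factor $\e_n$ in the test function, into exactly the $\e_n^{1-r}$-scaled term $A_{\e_n}$; that the $\e_n^{-1}$ from differentiating $b(\mnx)$ cancels the $\e_n$ in the test function to leave the $\mathcal O(1)$ term $B_{\e_n}$; and that every other contribution is genuinely $\mathcal O(\e_n)$. No estimate beyond the uniform bounds of Lemma \ref{bdd} and the single passage to the limit \eqref{conv5.5} of Lemma \ref{conv} is needed. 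Finally, as noted in Remark \ref{test reg}, the formal testing of \eqref{weakform} by $\e_n\phi b(\mnx)$ and the integration by parts in time should strictly be carried out on the Galerkin approximations and then passed to the limit, since a weak solution in the sense of Definition \ref{sol} need not have enough regularity; this step is routine and does not affect \eqref{key}.
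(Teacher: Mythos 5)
Your proposal is correct and follows essentially the same route as the paper: insert the oscillating test function $\e_n\phi(x)b(\mnx)\psi(t)c(\mnt)$, integrate by parts in time, sort the powers of $\e_n$, control the $\mathcal{O}(\e_n)$ remainder by the uniform bounds of Lemma \ref{bdd} and assumption {\bf (A)}, and pass to the limit in the gradient term via \eqref{conv5.5}. The only (minor) difference is that the paper tests the \emph{difference} of the weak forms of \eqref{DW} and \eqref{DWHP}, so that each residual term vanishes individually (using Proposition \ref{mean} and $\langle\nabla_y b\rangle_y=0$), whereas you test \eqref{weakform} alone and deduce \eqref{key} from $\e_n^{1-r}A_{\e_n}=\e_n C_{\e_n}-B_{\e_n}$ together with the separate convergence of $B_{\e_n}$; both arguments are valid and yield the same conclusion.
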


\begin{proof}
Taking a difference of the weak forms for \eqref{DW} and \eqref{DWHP} and recalling $w$ in \eqref{conv-w}, we observe that
\begin{align*}
0
&=
\int_0^T\Bigl\langle
\partial_{tt}^2u_{\e_n}(t)+g(\mnt)\partial_t u_{\e_n}(t)-w(t), \phi b(\tfrac{\cdot}{\e_n})\Bigl\rangle_{H^1_0(\Omega)}\psi(t)c(\mnt)\, dt\displaybreak[0]\\
&\quad +
\int_0^T\int_{\Omega}	\bigl(  a(t,\tfrac{x}{\e_n})\nabla u_{\e_n}(x,t)-j_{\rm hom}(x,t)  \bigl) \cdot  \nabla\bigl(\phi(x)b(\mnx)\bigl)\psi(t)c(\mnt)	\, dxdt\displaybreak[0]\\
&\quad -
\int_0^T\int_{\Omega}	(f_{\e_n}-f)(x,t) \phi(x)b(\mnx)\psi(t)c(\mnt)\, dxdt\displaybreak[0]\\
&=-\int_0^T\int_{\Omega}	\partial_t u_{\e_n}(x,t)	\phi(x)b(\mnx)\bigl(\partial_{t}\psi(t)c(\mnt)+\psi(t)\e_n^{-r}\partial_s c(\mnt)\bigl)\, dxdt\displaybreak[0]\\
&\quad +
\int_0^T\int_{\Omega}	\bigl(g_{\rm per}(\mnt)+C_{\ast}\mnt\bigl)\partial_t u_{\e_n}(x,t) \phi(x)b(\mnx)\psi(t)c(\mnt)\, dxdt\displaybreak[0]\\
&\quad -
\int_0^T\Bigl\langle w(t), \phi b(\tfrac{\cdot}{\e_n})\Bigl\rangle_{H^1_0(\Omega)}\psi(t)c(\mnt)\, dt\displaybreak[0]\\
&\quad +
\int_0^T\int_{\Omega}	\bigl(a(t,\mnx)\nabla u_{\e_n}(x,t)-j_{\rm hom}(x,t)\bigl)  \cdot   \bigl(\nabla\phi(x)b(\mnx)+\e_n^{-1}\phi(x)\nabla_y b(\mnx)\bigl) \psi(t)c(\mnt)\, dxdt\displaybreak[0]\\
&\quad -
\int_0^T\int_{\Omega}	(f_{\e_n}-f)(x,t) \phi(x)b(\mnx)\psi(t)c(\mnt)\, dxdt.
\end{align*}
Multiplying both sides by $\e_n$, we conclude that
\begin{align}
\lefteqn{	
-\e_n^{1-r}\int_0^T\int_{\Omega}	\partial_t u_{\e_n}(x,t)	\phi(x)b(\mnx)\psi(t)\partial_s c(\mnt)\, dxdt
}\displaybreak[0]\label{key3}\\
&\quad+
\int_0^T\int_{\Omega}	a(t,\mnx)\nabla u_{\e_n}(x,t)  \cdot   \phi(x)\nabla_y b(\mnx)\psi(t)c(\mnt)\, dxdt\displaybreak[0]\nonumber\\
&\quad+
\e_n^{1-r}\int_0^T\int_{\Omega} C_{\ast}t\partial_tu_{\e_n}(x,t)   \phi(x)b(\mnx)\psi(t)c(\mnt)\, dxdt\displaybreak[0]\nonumber\\
&=
\e_n\int_0^T\int_{\Omega}	\partial_t u_{\e_n}(x,t)	\phi(x)b(\mnx)\partial_{t}\psi(t)c(\mnt)\, dxdt\displaybreak[0]\nonumber\\
&\quad -
\e_n\int_0^T\int_{\Omega}	g_{\rm per}(\mnt)\partial_t u_{\e_n}(x,t) \phi(x)b(\mnx)\psi(t)c(\mnt)\, dxdt\displaybreak[0]\nonumber\\
&\quad+\e_n
\int_0^T\Bigl\langle w(t), \phi b(\tfrac{\cdot}{\e_n})\bigl\rangle_{H^1_0(\Omega)}\psi(t)c(\mnt)\, dt
\displaybreak[0]\nonumber\\
&\quad-
\e_n\int_0^T\int_{\Omega}	\bigl(a(t,\mnx)\nabla u_{\e_n}(x,t)-j_{\rm hom}(x,t)\bigl)  \cdot   \nabla\phi(x)b(\mnx) \psi(t)c(\mnt)\, dxdt\displaybreak[0]\nonumber\\
&\quad+
\int_0^T\int_{\Omega}	j_{\rm hom}(x,t)  \cdot   \phi(x)\nabla_y b(\mnx)\psi(t)c(\mnt)\, dxdt\displaybreak[0]\nonumber\\
&\quad +\e_n
\int_0^T\int_{\Omega}	(f_{\e_n}-f)(x,t) \phi(x)b(\mnx)\psi(t)c(\mnt)\, dxdt\to 0\quad \text{ as }\ \e_n\to 0_+.\displaybreak[0]\nonumber
\end{align}
Here we used {\bf (A)}, Lemmas \ref{bdd} and \ref{conv}, Proposition \ref{mean}  and  $\langle\nabla_y b\rangle_y=0$.
\end{proof}

Employing Lemma \ref{conv} and Corollary \ref{veryweak}, we shall apply \eqref{key} for any $\phi\in C^{\infty}_c(\Omega)$, $b\in C^{\infty}_{\rm per}(\square)/\R$, $\psi \in C^{\infty}_{\rm c}(I)$ and $c\in C^{\infty}_{\rm per}(J)$ to show \eqref{u1}.
\begin{lem}\label{1st}
For any $0<r\le 1$, \eqref{u1} holds.
\end{lem}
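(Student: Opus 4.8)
The plan is to specialize the identity \eqref{key} of Lemma \ref{keylem} to the choice $c\equiv 1$ (which does belong to $C^\infty_{\rm per}(J)$), keeping $\phi\in C^\infty_c(\Omega)$, $b\in C^\infty_{\rm per}(\square)/\R$ and $\psi\in C^\infty_c(I)$ arbitrary, and to pass to the limit $\e_n\to 0_+$ using the convergences collected in Lemma \ref{conv}. With $c\equiv 1$ one has $\partial_s c\equiv 0$, so the first line of \eqref{key} collapses to $\e_n^{1-r}C_\ast\int_0^T\int_\Omega t\,\partial_t u_{\e_n}(x,t)\,\phi(x)b(\mnx)\psi(t)\,dxdt$, and since $0<r\le 1$ the prefactor satisfies $\e_n^{1-r}\le 1$ for small $\e_n$.

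The first step is to show this quantity vanishes in the limit. If $C_\ast=0$ it is identically zero. If $C_\ast\neq 0$, write $t\,\partial_t u_{\e_n}=\sqrt t\,(\sqrt t\,\partial_t u_{\e_n})$: by \eqref{conv7} (equivalently Lemma \ref{bdd}(iii)) we have $\sqrt t\,\partial_t u_{\e_n}\to 0$ strongly in $L^2(\Omega\times I)$, while $\sqrt t$ is bounded on $I$ and $\phi(x)b(\mnx)\psi(t)$ is bounded in $L^\infty(\Omega\times I)$ uniformly in $\e_n$; together with $\e_n^{1-r}\le 1$ this forces the term to $0$. For $0<r<1$ the argument is even easier because then $\e_n^{1-r}\to 0$ while $(\partial_t u_{\e_n})$ is bounded in $L^2(\Omega\times I)$ by Lemma \ref{bdd}(ii); the genuinely borderline case, where the prefactor no longer helps, is $r=1$, and that is exactly where \eqref{conv7} is needed.

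The second step is the limit of the second line of \eqref{key}, which with $c\equiv 1$ reads $\int_0^T\int_\Omega a(t,\mnx)\nabla u_{\e_n}(x,t)\cdot\phi(x)\nabla_y b(\mnx)\psi(t)\,dxdt$. Here $\Psi(x,t,y,s):=\phi(x)\nabla_y b(y)\psi(t)$ is an admissible test function, so the weak space-time two-scale convergence \eqref{conv5.5} gives the limit $\int_0^T\int_\Omega\int_0^1\int_\square a(t,y)\bigl(\nabla u_0(x,t)+\nabla_y w_1(x,t,y,s)\bigr)\cdot\phi(x)\nabla_y b(y)\psi(t)\,dZ$. Combining the two steps, \eqref{key} becomes
\[
\int_0^T\int_\Omega\Bigl[\int_\square a(t,y)\bigl(\nabla u_0(x,t)+\nabla_y\langle w_1(x,t,y,\cdot)\rangle_s\bigr)\cdot\nabla_y b(y)\,dy\Bigr]\phi(x)\psi(t)\,dxdt=0
\]
after integrating out $s$ by Fubini. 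By the arbitrariness of $\phi\in C^\infty_c(\Omega)$ and $\psi\in C^\infty_c(I)$, and then density of $C^\infty_{\rm per}(\square)/\R$ in $H^1_{\rm per}(\square)/\R$, for a.e.\ $(x,t)\in\Omega\times I$ the function $\langle w_1(x,t,\cdot,\cdot)\rangle_s$ satisfies $\int_\square a(t,y)\bigl(\nabla u_0(x,t)+\nabla_y\langle w_1(x,t,y,\cdot)\rangle_s\bigr)\cdot\nabla_y b\,dy=0$ for all $b\in H^1_{\rm per}(\square)/\R$.

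Finally, since $\langle w_1\rangle_{y,s}=0$, the function $\langle w_1(x,t,\cdot,\cdot)\rangle_s$ has zero mean in $y$, hence lies in $H^1_{\rm per}(\square)/\R$; by the uniform ellipticity \eqref{ellip} and Lax--Milgram the weak problem just obtained has a unique solution there, which by linearity coincides with $\sum_{k=1}^N\partial_{x_k}u_0(x,t)\Phi_k(t,\cdot)$, where $\Phi_k$ is the corrector defined by \eqref{CPslow}. This is precisely \eqref{u1}. The only delicate point in the whole argument is Step~1 at $r=1$, where the $C_\ast$-term is not annihilated by the $\e_n^{1-r}$ prefactor and must be absorbed through the refined a priori estimate \eqref{conv7}; everything else is a routine passage to the two-scale limit.
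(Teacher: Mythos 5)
Your proposal is correct and follows essentially the same route as the paper: set $c\equiv 1$ in \eqref{key}, kill the $C_\ast$-term via \eqref{conv7} (trivially if $C_\ast=0$, with $\e_n^{1-r}\le 1$ for $0<r\le 1$), pass to the two-scale limit in the elliptic term via \eqref{conv5.5} to obtain \eqref{u1eq}, and identify $\langle w_1\rangle_s$ with $u_1$. The only cosmetic difference is the last step, where you invoke Lax--Milgram on $H^1_{\rm per}(\square)/\R$ while the paper subtracts the two weak formulations and uses ellipticity plus the Poincar\'e--Wirtinger inequality; both rest on the same coercivity and are equivalent.
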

\begin{proof}
Set $c(s)\equiv 1$ in \eqref{key}. By \eqref{conv7}, the first term in \eqref{key} is zero. Thanks to \eqref{conv5.5}, we deduce by \eqref{key} that 
\begin{equation*}
\int_0^T\int_{\Omega}\int_0^1\int_{\square}	a(t,y)\bigl(\nabla u_0(x,t)+\nabla_y  w_1  (x,t,y,s)\bigl)\cdot \phi(x)\nabla_y b(y)\psi(t)c(s)\, dZ=0.
\end{equation*}
From the arbitrariness of $\phi\in C^{\infty}_{\rm c}(\Omega)$ and $\psi\in C^{\infty}_{\rm c}(I)$, we get
\begin{equation}\label{u1eq}
\int_{\square}	a(t,y)\bigl(\nabla u_0(x,t)+\nabla_y  \langle w_1(x,t,y,\cdot)\rangle_s  \bigl)\cdot \nabla_y b(y)\, dy=0\quad \text{ a.e. in }\ \Omega\times I.
\end{equation}
Recalling that   
\begin{equation*}
u_1=\sum_{k=1}^N\partial_{x_k}u_0(x,t)\Phi_k(y),
\end{equation*}
where $\Phi_k$ is the solution to \eqref{CPslow},
we check that
\begin{eqnarray*}
\lefteqn{\int_{\square}	a(t,y)\bigl(\nabla u_0(x,t)+\nabla_yu_1(x,t,y)\bigl)\cdot \nabla_yb(y)\, dy}\\
&\quad=&
\sum_{k=1}^N\partial_{x_k}u_0(x,t)\int_{\square}	a(t,y)\bigl(\nabla_y\Phi_k(y)+e_k\bigl)\cdot \nabla_yb(y)\, dy	
\stackrel{\eqref{CPslow}}{=}0.
\end{eqnarray*}
Hence  \eqref{u1eq} with $\langle w_1\rangle_s$ replaced by $u_1(x,t,y)$ holds. Setting $b=( \langle w_1\rangle_s  -u_1)(x,t,\cdot)$ and subtracting \eqref{u1eq} for  $\langle w_1\rangle_y$ and $u_1$,  we deduce by the Poincar\'e-Wirtinger inequality that 
\begin{eqnarray}\label{ztilz}
0
&=&
\int_{\square}a( t,  y)\nabla_y(  \langle w_1\rangle_s -u_1 )(x,t,y)\cdot \nabla_y(  \langle w_1\rangle_s -u_1  )(x,t,y)\, dy\nonumber\\
&\stackrel{\eqref{ellip}}{\ge}& 
{\lambda}\|\nabla_y( \langle w_1\rangle_s -u_1  )(x,t)\|^2_{L^2(\square)}
\ge
\frac{\lambda}{C_{\square}}\|(  \langle w_1\rangle_s -u_1  )(x,t)\|^2_{L^2(\square)},
\nonumber
\end{eqnarray}
which implies that $\langle w_1\rangle_s =u_1$. This completes the proof.
\end{proof}

Before discussing the case $r>1$, we claim that
$$
w_1=w_1(x,t,y) \quad \text{ for all $r\in (1,+\infty)$}.
$$
Indeed, multiplying both sides by $\e_n^{-2(1-r)}$ in  \eqref{key3}, we see that the third term in \eqref{key3} is zero as $\e_n\to 0_+$ due to \eqref{conv7}, and then, one can derive by Lemma \ref{bdd} and Corollary \ref{veryweak} that
\begin{align*}
0&=
-\lim_{\e_n\to 0_+}
\e_n^{r-1}\int_0^T\int_{\Omega}	\partial_t u_{\e_n}(x,t)	\phi(x)b(\mnx)\psi(t)\partial_s c(\mnt)\, dxdt\\
&=
\underbrace{\lim_{\e_n\to 0_+}\e_n^{r-1}\int_0^T\int_{\Omega}	 u_{\e_n}(x,t)	\phi(x)b(\mnx)\partial_t\psi(t)\partial_s c(\mnt)\, dxdt}_{=0}\\
&\quad+\lim_{\e_n\to 0_+}\int_0^T\int_{\Omega}	 \frac{u_{\e_n}}{\e_n}(x,t)	\phi(x)b(\mnx)\psi(t)\partial_{ss}^2 c(\mnt)\, dxdt\\
&=
\int_0^T\int_{\Omega}\int_0^1\int_{\square}	  w_1(x,t,y,s) 	\phi(x)b(y)\psi(t)\partial_{ss}^2 c(s)\, dZ,
\end{align*} 
which implies that $\partial_s  w_1$ is independent of $s\in J$ and so is $  w_1  $ by $J$-periodicity. Thus $ w_1  \in L^2(\Omega\times I;H^1_{\rm per}(\square)/\R)$ for all $r>1$.

We choose $c(s)\equiv 1$ in \eqref{key} below. Then one can get the following 
\begin{lem}\label{3rd}
For any $1<r\le 2$, \eqref{u1} holds. 
\end{lem}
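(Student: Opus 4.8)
The plan is to exploit the \emph{very weak two-scale convergence} identity from Corollary \ref{veryweak} together with the key relation \eqref{key}, now applied with a genuinely $s$-dependent test function $c\in C^\infty_{\rm per}(J)$. Since we have just established that $w_1=w_1(x,t,y)$ is independent of $s$ for $r>1$, I expect the $s$-oscillation to produce a nontrivial constraint only at the critical exponent $r=2$; for $1<r<2$ the factor $\e_n^{1-r}\to\infty$ must be tamed, and this is where the estimate \eqref{conv7} (equivalently Lemma \ref{bdd}(iii)) is essential. Concretely, first I would rewrite the first term of \eqref{key} by integration by parts in $t$, turning $\partial_t u_{\e_n}\,\partial_s c(\mnt)$ into $u_{\e_n}\,\partial_t\psi\,\partial_s c + \e_n^{-r}u_{\e_n}\,\psi\,\partial^2_{ss}c$; the former carries $\e_n^{1-r}$ and vanishes by boundedness of $(u_{\e_n})$ and $r>1$ wait --- one must be careful: $\e_n^{1-r}\to\infty$. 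So instead I would keep the form with $\partial_tu_{\e_n}$ and split $\partial_tu_{\e_n}=(\sqrt{t\e_n^{-r}}\partial_tu_{\e_n})\cdot\sqrt{\e_n^r/t}$, using Lemma \ref{bdd}(iii) to see that $\e_n^{1-r}\partial_tu_{\e_n}=\e_n^{1-r/2}t^{-1/2}\cdot(\sqrt{t\e_n^{-r}}\partial_tu_{\e_n})$, whose $L^2$-norm on $\Omega\times(\delta,T)$ is $O(\e_n^{1-r/2})\to 0$ precisely when $r<2$.

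For $1<r<2$ the upshot is that \emph{both} the first and the third terms of \eqref{key} vanish in the limit (the third term likewise carries $\e_n^{1-r}\partial_tu_{\e_n}$ against a bounded test function and dies by the same estimate, after cutting off near $t=0$ using that $\psi\in C^\infty_c(I)$ has support away from $0$). What survives is exactly the same identity as in Lemma \ref{1st}: by \eqref{conv5.5},
\begin{equation*}
\int_0^T\!\!\int_\Omega\!\int_0^1\!\!\int_\square a(t,y)\bigl(\nabla u_0(x,t)+\nabla_y w_1(x,t,y)\bigr)\cdot\phi(x)\nabla_y b(y)\,\psi(t)c(s)\,dZ=0,
\end{equation*}
and integrating out the now-trivial $s$-dependence of $w_1$ (and the free factor $c$) together with the arbitrariness of $\phi,\psi$ yields
\begin{equation*}
\int_\square a(t,y)\bigl(\nabla u_0(x,t)+\nabla_y\langle w_1(x,t,y,\cdot)\rangle_s\bigr)\cdot\nabla_y b(y)\,dy=0\quad\text{a.e.\ in }\Omega\times I
\end{equation*}
for all $b\in C^\infty_{\rm per}(\square)/\R$, i.e. the weak form of the slow cell problem \eqref{CPslow}. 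At $r=2$ the singular term no longer vanishes: $\e_n^{1-r}=\e_n^{-1}$ pairs with $\partial_tu_{\e_n}$ through Corollary \ref{veryweak} (very weak convergence of $u_{\e_n}/\e_n$), but here the correct reading is that $\e_n^{1-r}\partial_tu_{\e_n}\,c(\mnt)$ tested against $\phi b\psi$, after an integration by parts moving $\partial_t$ onto $\psi c(\mnt)=\psi\cdot c(t/\e_n^2)$ so that $\partial_t[c(\mnt)]=\e_n^{-2}\partial_sc(\mnt)$, again produces the very weak limit $w_1$; tracking the powers of $\e_n$ shows the surviving contribution is $C_*t\,\partial_t(\text{of }w_1\text{ in }t)$, giving the extra drift term $C_*t\,\partial_t\Phi_k$ in \eqref{CPcritical}. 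But for the present lemma ($1<r\le2$) I only need to handle $r<2$ as above and note the boundary case $r=2$ is deferred; rereading the statement, Lemma \ref{3rd} claims \eqref{u1} for \emph{all} $1<r\le2$, so at $r=2$ I must still run the argument but with $c\equiv1$: with $c\equiv1$ the first term of \eqref{key} is $0$ by \eqref{conv7} directly (no singular factor survives since $\partial_sc\equiv0$), and the third term $\e_n^{-1}\int C_*t\,\partial_tu_{\e_n}\phi b\psi$ is handled by integration by parts in $t$ and Corollary \ref{veryweak}, contributing a term that integrates to zero against $\nabla_yb$ over $\square$ because $\langle\nabla_yb\rangle_y=0$ --- hence again \eqref{u1eq} with $\langle w_1\rangle_s$ in place of the corrector, and the Poincar\'e--Wirtinger coercivity argument of Lemma \ref{1st} finishes it.

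The structure of the proof is therefore: (1) with $c\equiv1$, reduce \eqref{key} to the variational identity \eqref{u1eq} for $\langle w_1\rangle_s$, controlling the first term by \eqref{conv7} and the third (when $C_*\neq0$, $r=2$) by an integration by parts plus Corollary \ref{veryweak} and the mean-zero property $\langle\nabla_yb\rangle_y=0$; (2) observe that $u_1=\sum_k\partial_{x_k}u_0\,\Phi_k$ with $\Phi_k$ solving \eqref{CPslow} satisfies the same identity; (3) subtract, set $b=(\langle w_1\rangle_s-u_1)(x,t,\cdot)$, and conclude $\langle w_1\rangle_s=u_1$ by uniform ellipticity \eqref{ellip} and the Poincar\'e inequality on $\square$. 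The main obstacle is step (1) at $r=2$: making rigorous that the $\e_n^{-1}$-weighted damping term, which a priori blows up, actually converges after integration by parts to something controlled by Corollary \ref{veryweak}, and crucially that whatever it converges to drops out when tested against $\nabla_yb$ with $b$ of mean zero --- this is exactly the place where the hypothesis $C_*\le 2\lambda/C_\square$ in Assumption {\bf (A)}(iii) and the restriction to $c\equiv1$ are doing their work, and one must be careful near $t=0$ where the weight $t\e_n^{-r}$ and the space $L^2_{\rm loc}((0,T];H^{-1})$ for $h$ force the use of $\psi\in C^\infty_c(I)$ supported away from the origin.
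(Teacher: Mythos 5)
Your treatment of $1<r<2$ is fine (and your way of killing the singular term directly via Lemma \ref{bdd}(iii), writing $\e_n^{1-r}t\,\partial_t u_{\e_n}=\e_n^{1-r/2}\sqrt{t}\cdot\sqrt{t\e_n^{-r}}\partial_t u_{\e_n}=O(\e_n^{1-r/2})$ in $L^2$, is a legitimate alternative to the paper's route, which instead integrates by parts in $t$ and uses $\e_n^{2-r}\to0$ together with the very weak convergence $u_{\e_n}/\e_n\vw w_1$ of Corollary \ref{veryweak}). But your proof breaks down at the critical case $r=2$, $C_\ast\neq0$. There the surviving term, after integration by parts and Corollary \ref{veryweak}, is
\begin{equation*}
-\,C_\ast\int_0^T\!\!\int_\Omega\!\int_\square w_1(x,t,y)\,\phi(x)\,b(y)\,\partial_t\bigl(t\psi(t)\bigr)\,dy\,dx\,dt,
\end{equation*}
which is paired with $b$ itself, not with $\nabla_y b$; your claim that it ``integrates to zero against $\nabla_y b$ because $\langle\nabla_y b\rangle_y=0$'' is simply false, and indeed this term is the whole point of the critical case: it must be identified (via a duality/measurability argument, cf.\ \eqref{chk-t-d}) with $C_\ast\int_0^T\int_\Omega\langle t\partial_t w_1,b\rangle\phi\psi$, so that the limit identity is \eqref{CP2}, i.e.\ the \emph{parabolic} cell problem \eqref{CPcritical} with the drift $C_\ast t\partial_t$, not the elliptic identity \eqref{u1eq}. (You in fact noted this drift earlier in your sketch, then discarded it.)

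Consequently your steps (2)--(3) are also wrong at $r=2$, $C_\ast\neq0$: in \eqref{u1} the corrector $\Phi_k$ is the solution of \eqref{CPcritical}, not of \eqref{CPslow} (these differ unless $a=a(y)$ or $C_\ast=0$), so comparing $\langle w_1\rangle_s$ with the elliptic corrector would identify $w_1$ with the wrong function and contradict Theorem \ref{HPthm}. Moreover, the subtraction argument at $r=2$ cannot be closed by the Poincar\'e--Wirtinger coercivity of Lemma \ref{1st}; one needs the estimate \eqref{unique2}, in which the time-derivative term produces $\tfrac{C_\ast T}{2}\|(w_1-u_1)(x,T)\|^2_{L^2(\square)}-\tfrac{C_\ast}{2}\int_0^T\|(w_1-u_1)(x,t)\|^2_{L^2(\square)}dt$ and the hypothesis $C_\ast\le 2\lambda/C_\square$ of Assumption {\bf(A)}(iii) is used precisely to make $-\tfrac{C_\ast}{2}+\tfrac{\lambda}{C_\square}\ge0$ (it plays no role in making the drift ``drop out'', as you suggest). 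So the missing ingredients are: the identification \eqref{chk-t-d} of the limit as the distributional drift term, the use of the corrector from \eqref{CPcritical} in the comparison, and the modified uniqueness estimate \eqref{unique2}.
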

\begin{proof}
As for the periodic case, \eqref{u1eq} follows from \eqref{key} and \eqref{conv5.5} with $c(s)\equiv 1$ and $C_\ast = 0$. Thus the assertion is obtained as in the proof of Lemma \ref{1st}. We next consider the quasi-periodic case. Applying Corollary \ref{veryweak} to \eqref{key} with $c(s)\equiv 1$, we deduce that
\begin{align}
\lefteqn{\lim_{\e_n\to 0_+}
\e_n^{1-r}\int_0^T\int_{\Omega} C_{\ast}t\partial_tu_{\e_n}(x,t)\phi(x)b(\mnx)\psi(t)\, dxdt}\label{key2}\\
&=-\lim_{\e_n\to 0_+}
\e_n^{2-r}\int_0^T\int_{\Omega} C_{\ast}\frac{u_{\e_n}}{\e_n}(x,t)\phi(x)b(\mnx)\partial_t\bigl(t\psi(t)\bigl)\, dxdt\nonumber\\
&=
\begin{cases}
0\quad &\text{ if $1<r<2$,}\vspace{3mm}\\
\displaystyle -\int_0^T\int_{\Omega}\int_{\square}C_{\ast} w_1  (x,t,y)\phi(x)b(y)\partial_t\bigl(t\psi(t)\bigl)\, dydxdt
\quad &\text{ if $r=2$.}
\end{cases}
\nonumber
\end{align} 
Thus \eqref{conv5.5} and \eqref{key} yield \eqref{u1eq} for the case $1<r<2$. 

On the other hand, for $r=2$, we find by \eqref{conv5.5}, \eqref{key} and \eqref{key2} that
\begin{align*}
\lefteqn{\int_0^T\int_{\Omega}\bigl\langle C_{\ast}t\partial_t w_1(x,t,\cdot),b\bigl\rangle_{H^1_{\rm per}(\square)/\R}\phi(x)\psi(t)\, dxdt}\\
&\quad+
\int_0^T\int_{\Omega}\int_{\square}	 a(t,y)\bigl(\nabla u_0(x)+\nabla_y w_1(x,t,y)\bigl)\cdot \phi(x)\nabla_y b(y)\psi(t)\, dydxdt
=0.
\nonumber
\end{align*}
Here we used the fact that
\begin{align}
\lefteqn{
-C_{\ast}\int_0^T\int_{\Omega}\int_\square
w_1(x,t,y)\phi(x)b(y)\partial_t\bigl(t\psi(t)\bigl)\, dydxdt}\label{chk-t-d}\\
&\quad=
C_{\ast}\int_0^T\int_{\Omega}
\bigl\langle t\partial_tw_1(x,t,\cdot), b\bigl\rangle_{H^1_{\rm per}(\square)/\R}\phi(x)\psi(t)\, dxdt.\nonumber
\end{align}
Indeed, define $\xi(x,t,\cdot)\in (H^1_{\rm per}(\square)/\R)^{\ast}$ by
$$
\int_0^T\bigl\langle  \xi(x,t,\cdot), \zeta(t,\cdot)\bigl\rangle_{H^1_{\rm per}(\square)/\R}\, dt
=
\int_0^T\int_{\square}a(t,y)\bigl(\nabla u_0(x)+\nabla_y w_1  (x,t,y)\bigl)\cdot\nabla_y \zeta(t,y)\, dydt
$$
for $\zeta\in L^2(I;H^1_{\rm per}(\square)/\R)$. Here $(H^1_{\rm per}(\square)/\R)^{\ast}$ is the dual space of $H^1_{\rm per}(\square)/\R$. By Pettis's theorem, we see that $\xi:\Omega\to L^2(I;(H^1_{\rm per}(\square)/\R)^{\ast}) $ is measurable, and moreover, $\xi \in L^2(\Omega;L^2(I;(H^1_{\rm per}(\square)/\R)^{\ast}))$ due to $u_0 \in H^1_0(\Omega)$ and $\nabla_y  w_1  \in  [L^2(\Omega \times I \times \square)]^N$.   
Hence one can verify by  \eqref{key} and  \eqref{key2} that 
\begin{equation*}
C_\ast\int^T_0    w_1(x,t,\cdot) \partial_t \bigl(t\psi(t)\bigl) \, dt = \int^T_0 \xi(x,t,\cdot)  \psi(t)\, dt \ \mbox{ in } (H^1_{\rm per}(\square)/\R)^{\ast}. 
\end{equation*}
Since $\psi \in C^\infty_{\rm c}(I)$ is arbitrary, we have
\begin{equation*}
C_\ast t\partial_t  w_1  (x,t,\cdot)=-\xi(x,t,\cdot)\ \mbox{ in } (H^1_{\rm per}(\square)/\R))^{\ast}
\end{equation*}
in the distributional sense  for a.e.~$(x,t) \in \Omega \times I $. Thus \eqref{chk-t-d} follows, and then, the arbitrariness of $\phi\in C^{\infty}_{\rm c}(\Omega)$ yields that
\begin{align}\label{CP2}
\lefteqn{\int_0^T \bigl\langle C_{\ast}t\partial_t w_1(x,t,\cdot),b\bigl\rangle_{H^1_{\rm per}(\square)/\R} \psi(t)\, dt}\\
&\quad +
\int_0^T\int_{\square}	a(t,y)\bigl(\nabla u_0(x)+\nabla_y  w_1  (x,t,y)\bigl)\cdot \nabla_y b(y)\psi(t)\, dydt=0\quad \text{ a.e.~in $\Omega$.}\nonumber
\end{align}
 
Now, recall \eqref{HPu1}, where $\Phi_k$ is the solution to \eqref{CPcritical}. 
Then \eqref{CP2} with $ w_1$ replaced by $u_1(x,t,y)$ holds. Hence choosing $b\psi=( w_1- u_1 )(x,\cdot,\cdot)$ and subtracting \eqref{CP2} for   $w_1$ and $u_1$, we derive that 
\begin{align}
0&=
\int_0^T\frac{C_{\ast}t}{2}\frac{d}{dt}\|(  w_1- u_1  )(x,t)\|_{L^2(\square)}^2\, dt \label{unique2}\\
&\quad +
\int_0^T\int_{\square}a(t,y)\nabla_y(  w_1- u_1 )(x,t,y)\cdot \nabla_y(  w_1- u_1 )(x,t,y)\, dydt\nonumber\\
&\stackrel{\eqref{ellip}}{\ge}
\frac{C_{\ast}T}{2}\|( w_1- u_1  )(x,T)\|_{L^2(\square)}^2-\frac{C_{\ast}}{2}\int_0^T\|( w_1- u_1  )(x,t)\|_{L^2(\square)}^2\, dt\nonumber\\
&\quad +
{\lambda}\int_0^T\|\nabla_y( w_1-u_1  )(x,t)\|^2_{L^2(\square)}\, dt\nonumber\\
&\stackrel{}{\ge}
\frac{C_{\ast}T}{2}\|( w_1- u_1  )(x,T)\|_{L^2(\square)}^2+\underbrace{\left(-\frac{C_{\ast}}{2}+\frac{\lambda}{C_{\square}}\right)}_{\ge 0\ \text{ by \bf (A)}}\int_0^T\|( w_1-u_1  )(x,t)\|_{L^2(\square)}^2\, dt,\nonumber
\end{align}
which implies $ w_1=u_1$. Furthermore, \eqref{unique2} yields the uniqueness of \eqref{CPcritical}, and moreover, if $a=a(y)$, then $\Phi_k$ is the solution to \eqref{CPslow} due to the uniqueness of \eqref{CPcritical}. This completes the proof. 
\end{proof}

We finally discuss the case where $2<r<+\infty$. 
\begin{lem}\label{4th}
For any $2<r< +\infty$,  \eqref{u1}  holds. 
\end{lem}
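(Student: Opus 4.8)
The plan is to rerun the argument of Lemma~\ref{1st} with the choice $c(s)\equiv1$ in the identity \eqref{key} of Lemma~\ref{keylem}, the only new feature being that for $r>2$ one has $1-r<-1$, so the prefactor $\e_n^{1-r}$ blows up and the first term of \eqref{key} must be shown to decay. With $c\equiv1$ that term equals
\[
\lim_{\e_n\to0_+}\e_n^{1-r}\int_0^T\int_{\Omega}C_{\ast}t\,\partial_t u_{\e_n}(x,t)\,\phi(x)b(\mnx)\psi(t)\,dxdt
=\lim_{\e_n\to0_+}C_{\ast}\e_n\int_0^T\int_{\Omega}\bigl(t\e_n^{-r}\partial_t u_{\e_n}\bigr)\phi(x)b(\mnx)\psi(t)\,dxdt .
\]
If $C_{\ast}=0$ this is trivially $0$. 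If $C_{\ast}\neq0$ we are under Assumption~{\bf(A)}(iv), so $a=a(y)$ and the smooth-data estimates of Lemma~\ref{bdd2} hold; since $\psi\in C^\infty_{\rm c}(I)$ has support in some $I_\sigma=(\sigma,T)$ and $(t\e_n^{-r}\partial_t u_{\e_n})$ is bounded in $L^2(\Omega\times I_\sigma)$ by Lemma~\ref{bdd2}(v) while $\phi\,b(\mnx)\psi$ is bounded in $L^2(\Omega\times I)$, the integral stays bounded and the factor $\e_n\to0_+$ annihilates it. Hence the first term of \eqref{key} vanishes in every case, and \eqref{key} collapses to $\lim_{\e_n\to0_+}\int_0^T\int_{\Omega}a(t,\mnx)\nabla u_{\e_n}\cdot\phi\,\nabla_y b(\mnx)\psi\,dxdt=0$.

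Next I would pass to the two-scale limit in the remaining term by means of \eqref{conv5.5} (the function $\phi(x)\nabla_y b(y)\psi(t)$ being an admissible test function), obtaining
\[
\int_0^T\int_{\Omega}\int_0^1\int_{\square}a(t,y)\bigl(\nabla u_0(x,t)+\nabla_y w_1(x,t,y,s)\bigr)\cdot\phi(x)\nabla_y b(y)\psi(t)\,dZ=0 ,
\]
and then, by the arbitrariness of $\phi\in C^\infty_{\rm c}(\Omega)$ and $\psi\in C^\infty_{\rm c}(I)$, exactly \eqref{u1eq} for $\langle w_1\rangle_s$, a.e.\ in $\Omega\times I$. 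Since the corrector $\Phi_k$ in \eqref{HPu1} solves the elliptic cell problem \eqref{CPslow}, multiplying \eqref{CPslow} by $b$, integrating over $\square$ and integrating by parts shows that \eqref{u1eq} also holds with $\langle w_1\rangle_s$ replaced by $u_1=\sum_{k=1}^N\partial_{x_k}u_0(x,t)\Phi_k(t,y)$. Subtracting the two relations, choosing $b=(\langle w_1\rangle_s-u_1)(x,t,\cdot)\in H^1_{\rm per}(\square)/\R$, and using \eqref{ellip} together with the Poincar\'e--Wirtinger inequality $\|w\|_{L^2(\square)}\le C_{\square}\|\nabla_y w\|_{L^2(\square)}$ forces $\langle w_1\rangle_s=u_1$, i.e.\ \eqref{u1}. (As in the claim preceding Lemma~\ref{3rd} one also has $w_1=w_1(x,t,y)$ for $r>1$, so in fact $\langle w_1\rangle_s=w_1$.)

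The step I expect to be the real obstacle is the cancellation of the first term of \eqref{key}: for $r>2$ the $L^\infty(I;L^2(\Omega))$-bound on $\partial_t u_{\e_n}$ alone only controls $\e_n^{1-r}t\,\partial_t u_{\e_n}$ by $\e_n^{1-r}\to+\infty$, which is useless. What rescues the argument is precisely the refined damping estimate $\|t\e^{-r}\partial_t u_\e\|_{L^2(\Omega\times I_\sigma)}\le C$ of Lemma~\ref{bdd2}(v), which effectively transfers $r$ powers of $\e$ onto $\partial_t u_\e$ and leaves one power to spare; this is the reason the extra regularity of the data in {\bf(A)}(iv) is imposed in this regime (and why, for $C_\ast=0$, no such hypothesis is needed). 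Once the first term is disposed of, everything downstream---the two-scale passage, the comparison with the elliptic cell problem, and the ellipticity/uniqueness argument---is identical to the case $0<r\le1$ already treated in Lemma~\ref{1st}.
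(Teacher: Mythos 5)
Your proposal is correct and follows essentially the same route as the paper: with $c\equiv1$ in \eqref{key}, the only delicate term is handled exactly as in the paper, namely by writing it as $C_\ast\e_n\int (t\e_n^{-r}\partial_t u_{\e_n})\phi\, b(\mnx)\psi$ and invoking the boundedness of $(t\e^{-r}\partial_t u_\e)$ in $L^2(\Omega\times I_\sigma)$ from Lemma \ref{bdd2}(v) (available since {\bf (A)}(iv) applies when $C_\ast\neq0$ and $r>2$), the compact support of $\psi$ supplying the restriction to $I_\sigma$. The subsequent passage to the two-scale limit via \eqref{conv5.5} and the comparison with the elliptic cell problem \eqref{CPslow} is the same reduction to the argument of Lemma \ref{1st} that the paper uses.
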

\begin{proof}
Due to \eqref{conv5.5} and \eqref{key}, it suffices to check 
$$
\lim_{\e_n\to 0+}\e_n^{1-r}\int_0^T\int_{\Omega}C_{\ast}t\partial_tu_{\e_n}(x,t)\phi(x)b(\mnx)\psi(t)c(\mnt)\, dxdt
=0
$$
with $c(s)\equiv 1$. It is clear if $C_{\ast}=0$. If $C_{\ast}\neq 0$,
since $(t\e^{-r}\partial_{t}u_\e)$ is bounded in $L^2(\Omega\times I_\sigma)$ by (v) of Lemma \ref{bdd2}, we find by $\psi\in C^{\infty}_{\rm c}(I)$ that
\begin{equation*}
\e_n^{1-r}\left|\int_0^T\int_{\Omega}C_{\ast}t\partial_tu_{\e_n}(x,t)\phi(x)b(\mnx)\psi(t)\, dxdt\right|
\le
C\e_n\|t\e_n^{-r}\partial_tu_{\e_n}\psi\|_{L^1(\Omega\times I)}\to 0\, \text{ as $\e_n\to 0_+$},
\end{equation*}
which completes the proof. 
\end{proof}

By Lemmas \ref{1st}, \ref{3rd} and \ref{4th}, we obtain \eqref{u1} for all $r\in (0,+\infty)$. 
Therefore, Theorem \ref{HPthm} is proved.

\section{Proof of Proposition \ref{property_of_a_hom}}
We consider the case of $C_{\ast}\neq 0$ and $r=2$ only (see \cite[Proposition 1.8]{AO} for the proof of the other case). We first prove (i). For each $\xi\in\R^N$, there exists a unique solution $\Phi_{\xi}\in L^{2}(I;H^1_{\rm per}(\square)/\R)$ to 
\begin{align}\label{criticalxi}
C_{\ast}t\partial_t\Phi_{\xi}-\dv_y\bigl[a(t,y)(\nabla_y \Phi_{\xi}+\xi)\bigl]=0\quad \text{ in }  I\times \square. 
\end{align}
Using \eqref{a_hom} and \eqref{ellip}, we derive that, for a.e. $t\in I$,
\begin{eqnarray*}  
a_{\rm hom}(t)\xi\cdot\xi
&\stackrel{\eqref{a_hom}}{=}&
\int_{\square} a(t,y)\bigl(\nabla_y\Phi_{\xi}(t,y)+\xi\bigl)\cdot\ \xi\, dy 	\nonumber\\
&\stackrel{\eqref{criticalxi}}{=}&
\int_{\square}a(t,y)(\nabla_y\Phi_{\xi}(t,y)+\xi)\cdot(\nabla_y\Phi_{\xi}(t,y)+\xi)\, dy+\frac{C_{\ast}t}{2}\frac{d}{dt}\|\Phi_{\xi}(t)\|_{L^2(\square)}^2\nonumber\\
&\stackrel{\eqref{ellip}}{\ge}&
\lambda\int_{\square}|\xi+\nabla_y\Phi_{\xi}(t,y)|^2\, dy+\frac{C_{\ast}t}{2}\frac{d}{dt}\|\Phi_{\xi}(t)\|_{L^2(\square)}^2
\nonumber\\
&=& 
\lambda\left(|\xi|^2+ \|\nabla_y\Phi_{\xi}(t)\|^2_{L^2(\square)}\right)
+\frac{C_{\ast}t}{2}\frac{d}{dt}\|\Phi_{\xi}(t)\|_{L^2(\square)}^2.
\end{eqnarray*}
Here we used the fact that $\langle\nabla_y\Phi_{\xi} (t,\cdot)  \rangle_y=0$. In an analogous way, we get
$$
a_{\rm hom}(t)\xi\cdot\xi
\le
\left(|\xi|^2+ \|\nabla_y\Phi_{\xi}(t)\|^2_{L^2(\square)}\right)+\frac{C_{\ast}t}{2}\frac{d}{dt}\|\Phi_{\xi}(t)\|_{L^2(\square)}^2.
$$ 

We next prove (ii). Let $\Phi_{j}$ be the unique solution to \eqref{criticalxi} with $\xi$ replaced by $e_j$. Then we observe from the symmetry of $a(t,y)$ that, for a.e. $t\in I$,
\begin{align*}
&\tenchi a_{\rm hom}(t)e_j\cdot e_k
=
a_{\rm hom}(t)e_{k}\cdot e_j\displaybreak[0]\\
&=
\int_{\square}	a(t,y)(\nabla_y\Phi_{k}(t,y)+e_k)\cdot e_{j}\, dy\\
&\quad +
\underbrace{\int_{\square}	a(t,y)(\nabla_y\Phi_{k}(t,y)+e_k)\cdot \nabla_y\Phi_j(t,y)\, dy
+
\bigl\langle C_{\ast}t\partial_t\Phi_k(t),\Phi_j(t)\bigl\rangle_{H^1_{\rm per}(\square)/\R}}_{=0 \text{ by \eqref{CPcritical}}}\displaybreak[0]\\
&=
\int_{\square}	(\nabla_y\Phi_{k}(t,y)+e_k)\cdot \tenchi a(t,y)(\nabla_y\Phi_j(t,y)+e_j)\, dy
+
\bigl\langle C_{\ast}t\partial_t\Phi_k(t),\Phi_j(t)\bigl\rangle_{H^1_{\rm per}(\square)/\R}\displaybreak[0]\\
&=
a_{\rm hom}(t)e_j\cdot e_k\\
&\quad+
\left[\int_{\square}a(t,y)(\nabla_y\Phi_{j}(t,y)+e_j)\cdot\nabla_y\Phi_k(t,y)\, dy+\bigl\langle C_{\ast}t\partial_t\Phi_k(t), \Phi_j(t)\bigl\rangle_{H^1_{\rm per}(\square)/\R}\right].
\end{align*}
However, for the second term in the last line, it follows that 
\begin{align*}
\lefteqn{
\int_{\square}	a(t,y)(\nabla_y\Phi_{j}(t,y)+e_j)\cdot\nabla_y\Phi_k(t,y)\, dy+\bigl\langle C_{\ast}t\partial_t\Phi_k(t),\Phi_j(t)\bigl\rangle_{H^1_{\rm per}(\square)/\R}}\\
&\quad=
C_{\ast}t\Bigl(-\bigl\langle\partial_t\Phi_j(t),\Phi_k(t)\bigl\rangle_{H^1_{\rm per}(\square)/\R}+\bigl\langle\partial_t\Phi_k(t),\Phi_j(t)\bigl\rangle_{H^1_{\rm per}(\square)/\R}\Bigl)
\neq 0\quad \text{ for } j\neq k, \nonumber
\end{align*}
which completes the proof.

\begin{rmk}\label{skew}
\rm
The skew symmetric part of $a_{\rm hom}(t)$ is defined by
\begin{align*}
a_{\rm hom}^{\rm skew}(t)e_j\cdot e_k
&=
\left(\frac{a_{\rm hom}(t)-\tenchi a_{\rm hom}(t)}{2}\right)e_j\cdot e_k 
\\
&=
-\frac{1}{2}
C_{\ast}t\Bigl(-\bigl\langle\partial_t\Phi_j(t),\Phi_k(t)\bigl\rangle_{H^1_{\rm per}(\square)/\R}+\bigl\langle\partial_t\Phi_k(t),\Phi_j(t)\bigl\rangle_{H^1_{\rm per}(\square)/\R}\Bigl).\nonumber
\end{align*}
Then we note that
the skew-symmetric part of $a_{\rm hom}  (t)  $ makes no contribution to the divergence for a.e.~$t\in I$. Assume that $u_0$ and $\Phi_k$ are smooth enough. Then we find by the symmetry of the Hessian that, for a.e. $t\in I$,
\begin{align*}
\mathrm{div} (a_{\rm hom}^{\rm skew}(t) \nabla u_0)
&= \frac{1}{2}\mathrm{div} (a_{\rm hom}^{\rm skew}(t) \nabla u_0)\\
&\quad -
\frac{1}{4}\sum_{j,k=1}^N
\left[C_{\ast}t\int_{\square}\Bigl(-\partial_t\Phi_j(t,y)\Phi_k(t,y)+\partial_t\Phi_k(t,y)\Phi_j(t,y)\Bigl)\, dy
\right]\partial_{x_jx_k}^2u_{0}\\
&= \frac{1}{2}\mathrm{div} (a_{\rm hom}^{\rm skew}(t) \nabla u_0)\\
&\quad  + 
\frac{1}{4}\sum_{j,k=1}^N
\left[C_{\ast}t\int_{\square}\Bigl( -\partial_t\Phi_k(t,y)\Phi_j(t,y)+\partial_t\Phi_j(t,y)\Phi_k(t,y)  \Bigl)\, dy
\right]\partial_{ x_kx_j }^2u_{0}\\
&=
\frac{1}{2}\mathrm{div} (a_{\rm hom}^{\rm skew}(t) \nabla u_0)-\frac{1}{2}\mathrm{div} (a_{\rm hom}^{\rm skew}(t) \nabla u_0)=0,
\end{align*}
which yields the assertion. 
\end{rmk}

\section{Proof of a corrector result}
This section is devoted to proving Theorem \ref{CR} and Corollary \ref{CR2}.
\subsection{Proof of Theorem \ref{CR}}
Let $a_{\e}=a(\mx)$ for the sake of simplicity. 
To show Theorem \ref{CR}, we observe from \eqref{ellip} that
\begin{align*}
\lefteqn{\lambda\int_{0}^T\int_{\Omega}
\left|\nabla u_{\e}-\left(\nabla  u_0+\nabla_yu_1(x,t,\mx)\right)\right|^2\, dxdt}\\
&\stackrel{\eqref{ellip}}{\le}
\int_{0}^T\int_{\Omega}
a_{\e}\left(\nabla u_{\e}-\left(\nabla  u_0+\nabla_yu_1(x,t,\mx)\right)\right)\cdot\left(\nabla u_{\e}-\left(\nabla u_0+\nabla_yu_1(x,t,\mx)\right)\right)\, dxdt\nonumber\\
&=
\int_{0}^T\int_{\Omega}
a_{\e}\nabla u_{\e}\cdot \nabla u_{\e}\, dxdt
-2\int_{0}^T\int_{\Omega}	a_{\e}\nabla u_{\e}\cdot \left(\nabla  u_0+\nabla_yu_1(x,t,\mx)\right)\, dxdt\nonumber\\
&\quad	+
\int_{0}^T\int_{\Omega}
a_{\e}\left(\nabla  u_0+\nabla_yu_1(x,t,\mx)\right)\cdot \left(\nabla  u_0+\nabla_yu_1(x,t,\mx)\right)\, dxdt
=:I_1^\e-2I_2^\e+I_3^\e.\nonumber
\end{align*}
In what follows, we shall estimate these three terms, $I_1^\e$, $I_2^\e$ and $I_3^\e$ for all $r\in (0,  +\infty) $.

We first estimate $I_1^\e$. 
\begin{lem}\label{for I_1-2}
Under the same assumption as in Theorem \ref{CR}, it holds that
\begin{align*}
\lefteqn{	\lim_{\e\to 0_+}\int_0^T\int_{\Omega}a_\e\nabla u_\e(x,t)\cdot\nabla u_{\e}(x,t)\, dxdt}\\
&\quad=
\int_0^T\int_{\Omega}\int_\square a(y)\bigl(\nabla u_0(x,t)+\nabla_y u_1(x,t,y)\bigl)
\cdot \bigl(\nabla u_0(x,t)+\nabla_y u_1(x,t,y)\bigl)\, dydxdt.
\end{align*}
\end{lem}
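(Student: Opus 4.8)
The plan is to avoid any two‑scale machinery and instead prove the energy convergence $I_1^\e\to L$ directly, where $I_1^\e:=\int_0^T\int_\Omega a_\e\nabla u_\e\cdot\nabla u_\e\,dxdt$ and $L$ denotes the right‑hand side of the assertion. First I would test the weak form \eqref{weakform} with $\phi=u_\e(t)$ and integrate over $I$: writing $w_\e:=\partial_{tt}^2u_\e+g(\mt)\partial_t u_\e$, this gives the identity
\begin{equation*}
\int_0^T\langle w_\e(t),u_\e(t)\rangle_{H^1_0(\Omega)}\,dt+I_1^\e=\int_0^T\langle f_\e(t),u_\e(t)\rangle_{H^1_0(\Omega)}\,dt .
\end{equation*}
By Lemma \ref{bdd2}(iii) the sequence $(w_\e)$ is bounded in $L^\infty(I;L^2(\Omega))$, so every term is finite, and I solve this for $I_1^\e$.

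Next I would pass to the limit in the two terms on the right. The term $\int_0^T\langle f_\e,u_\e\rangle$ converges to $\int_0^T\langle f,u_0\rangle$ because $f_\e\rightharpoonup f$ weakly in $L^2(\Omega\times I)$ (Assumption {\bf (A)}) and $u_\e\to u_0$ strongly in $L^2(\Omega\times I)$ by \eqref{conv4}. For $\int_0^T\langle w_\e,u_\e\rangle$ the key claim is that $w_\e\to w:=\partial_{tt}^2u_0+\langle g_{\rm per}\rangle_s\partial_t u_0+C_\ast h$ strongly in $C(\overline I;H^{-1}(\Omega))$. Indeed, since $a=a(y)$ is time‑independent, \eqref{DW} yields $w_\e=\dv(a_\e\nabla u_\e)+f_\e$, hence $\partial_t w_\e=\dv(a_\e\nabla\partial_t u_\e)+\partial_t f_\e$, which is bounded in $L^2(I;H^{-1}(\Omega))$ by Lemma \ref{bdd2}(ii) together with the hypothesis that $(\partial_t f_\e)$ is bounded in $L^2(\Omega\times I)$; meanwhile $(w_\e)$ is bounded in $L^\infty(I;L^2(\Omega))$ by Lemma \ref{bdd2}(iii). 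Since $L^2(\Omega)\hookrightarrow\hookrightarrow H^{-1}(\Omega)$, the Aubin--Lions--Simon lemma gives relative compactness of $(w_\e)$ in $C(\overline I;H^{-1}(\Omega))$, and the limit must be $w$ by \eqref{conv9} and \eqref{conv-w}. Combining this strong convergence in $C(\overline I;H^{-1}(\Omega))\hookrightarrow L^2(I;H^{-1}(\Omega))$ with the weak convergence $u_\e\rightharpoonup u_0$ in $L^2(I;H^1_0(\Omega))$ (from \eqref{conv1}) yields $\int_0^T\langle w_\e,u_\e\rangle\to\int_0^T\langle w,u_0\rangle$. Hence $I_1^\e\to\int_0^T\langle f-w,u_0\rangle_{H^1_0(\Omega)}\,dt$.

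It then remains to identify this limit with $L$. By the homogenized equation \eqref{DWHP} one has $f-w=-\dv(a_{\rm hom}\nabla u_0)$ in $L^2(I;H^{-1}(\Omega))$, so $\langle f-w,u_0\rangle_{H^1_0(\Omega)}=\int_\Omega a_{\rm hom}\nabla u_0\cdot\nabla u_0\,dx$ for a.e.\ $t$, which gives $I_1^\e\to\int_0^T\int_\Omega a_{\rm hom}\nabla u_0\cdot\nabla u_0\,dxdt$. Finally, since $u_1=\sum_k\partial_{x_k}u_0\,\Phi_k$ with each $\Phi_k$ the solution of the cell problem \eqref{CPslow}, testing \eqref{CPslow} with $\Phi_j$ gives $\int_\square a(y)(\nabla_y\Phi_k+e_k)\cdot\nabla_y\Phi_j\,dy=0$; using this, the symmetry of $a(y)$ and \eqref{a_hom},
\begin{equation*}
\int_\square a(y)(\nabla u_0+\nabla_y u_1)\cdot(\nabla u_0+\nabla_y u_1)\,dy=\sum_{j,k}\partial_{x_j}u_0\,\partial_{x_k}u_0\int_\square a(y)(e_j+\nabla_y\Phi_j)\cdot(e_k+\nabla_y\Phi_k)\,dy=a_{\rm hom}\nabla u_0\cdot\nabla u_0 ,
\end{equation*}
which is exactly $L$ after integrating in $(x,t)$.

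The main obstacle is the passage to the limit in $\int_0^T\langle w_\e,u_\e\rangle$: a priori this is a product of two sequences converging only weakly (in $L^2(I;H^{-1})$ and $L^2(I;H^1_0)$ respectively), and it is rescued precisely by the time‑compactness of $(w_\e)$ in $C(\overline I;H^{-1}(\Omega))$. This compactness uses crucially that the elliptic coefficient is time‑independent — so that $\partial_t w_\e$ reduces to $\dv(a_\e\nabla\partial_t u_\e)+\partial_t f_\e$ — and the smooth‑data bounds of Lemma \ref{bdd2}. I would also point out that the argument never requires the strong $L^2(I;H^1_0)$‑convergence of $u_\e$, so it does not presuppose the corrector result of Theorem \ref{CR} and there is no circularity.
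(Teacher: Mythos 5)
Your proposal is correct and follows the same skeleton as the paper's proof: test \eqref{weakform} with $u_\e$, pass to the limit in $\int_0^T\int_\Omega f_\e u_\e$ and in $\int_0^T\langle \partial_{tt}^2u_\e+g(\mt)\partial_tu_\e,u_\e\rangle$, identify the limit as $\int_0^T\int_\Omega a_{\rm hom}\nabla u_0\cdot\nabla u_0\,dxdt$ via the homogenized problem, and unfold this into the two-scale energy using the cell problem \eqref{CPslow} (legitimate here since $a=a(y)$). The one genuine difference is the mechanism for the middle term: you upgrade $w_\e:=\partial_{tt}^2u_\e+g(\mt)\partial_tu_\e$ to strong convergence in $C(\overline I;H^{-1}(\Omega))$ via Aubin--Lions--Simon (using $\partial_t w_\e=\dv(a_\e\nabla\partial_t u_\e)+\partial_t f_\e$, Lemma \ref{bdd2}(ii)--(iii) and the compact embedding $L^2(\Omega)\hookrightarrow H^{-1}(\Omega)$) and pair it with the weak convergence of $u_\e$ in $L^2(I;H^1_0(\Omega))$, whereas the paper simply pairs the $L^\infty(I;L^2(\Omega))$-bound on $w_\e$ from Lemma \ref{bdd2}(iii) (hence weak convergence in $L^2(\Omega\times I)$, identified with $w$ through \eqref{conv9} and \eqref{conv-w}) against the strong convergence $u_\e\to u_0$ in $C(\overline I;L^2(\Omega))$ of \eqref{conv4}; the roles of "weak factor" and "strong factor" are swapped. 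The paper's route is lighter (no compactness-in-time lemma needed), while yours makes explicit where the smooth-data bounds and the time-independence of $a$ enter, and you also spell out the step $f-w=-\dv(a_{\rm hom}\nabla u_0)$ and the algebraic identity $\int_\square a(\nabla u_0+\nabla_y u_1)\cdot(\nabla u_0+\nabla_y u_1)\,dy=a_{\rm hom}\nabla u_0\cdot\nabla u_0$, which the paper leaves compressed. Both versions share the same minor caveats (justification of testing by $u_\e$ via the Galerkin approximation, cf.\ Remark \ref{test reg}, and working along the subsequence of Lemma \ref{conv} before concluding for the full sequence), so there is no gap specific to your argument.
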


\begin{proof}
From {\bf (A)}, \eqref{conv4} and (iii) of Lemma \ref{bdd2}, it follows that
\begin{align*}
\lefteqn{\int_0^T\int_{\Omega}a_\e\nabla u_\e(x,t)\cdot\nabla u_\e(x,t)\, dxdt}\\
&\stackrel{\eqref{weakform}}{=}
\int_0^T\int_{\Omega} f_\e(x,t)u_\e(x,t)\, dxdt -\int_0^T\int_{\Omega}\bigl(\partial_{tt}^2u_\e(x,t)+g(\mt)\partial_tu_\e(x,t)\bigl)u_\e(x,t)\, dxdt\\
&\stackrel{}{\to}
\int_0^T\int_{\Omega}f(x,t)u_0(x,t)\, dxdt -\int_0^T\int_{\Omega}w(x,t)u_0(x,t)\, dxdt\\
&=
\int_0^T\int_{\Omega}\int_\square a(y)\bigl(\nabla u_0(x,t)+\nabla_y u_1(x,t,y)\bigl)
\cdot \bigl(\nabla u_0(x,t)+\nabla_y u_1(x,t,y)\bigl)\, dydxdt 
\end{align*}
as $\e\to 0_+$. Here we used the fact that $\Phi_k$ is the unique solution to \eqref{CPslow} due to $a=a(y)$. This completes the proof.
\end{proof}

Before discussing the limit of $I_2^\e$, recall that $u_1$ is written by $u_1=\sum_{k=1}^N\partial_{x_k}u_0\Phi_k$. Due to the smoothness of $a(y)$,
noting that $a(y)(\nabla u_0+\nabla_yu_1)$ belongs to $[L^2(\Omega\times I;C_{\rm per}(\square))]^N$, 
we see by \cite[Theorem 4]{LNW} that it is an admissible test function in $[L^2(\Omega\times I\times \square)]^N$.
Hence \eqref{conv5.5} yields that
\begin{align}\label{I2}
I_2^\e\to
\int_0^T
\int_{\Omega}\int_{\square}
a(y)\bigl(\nabla u_0(x,t)+\nabla_y u_1(x,t,y)\bigl)
\cdot (\nabla u_0(x,t)+\nabla_y u_1(x,t,y))\, dydxdt
\end{align}
as $\e\to 0_+$.

We finally estimate $I_3^\e$. Thanks to $\nabla_y \Phi_k\in C_{\rm per}(\square)$, 
one can derive by Proposition \ref{mean} that
\begin{align}
I_3^\e	
&=	
\int_0^T\int_{\Omega}\Bigl[ a_{\e}\nabla u_{0}\cdot \nabla u_{0}
+2 a_{\e}\nabla_y u_1(x,t,\tfrac{x}{\e})\cdot\nabla u_0+a_{\e}\nabla_y u_1\cdot\nabla_y u_1(x,t,\tfrac{x}{\e})\Bigl]\, dxdt\label{I3}\\
&\to
\int_0^T\int_{\Omega}\int_{\square} \left[	a(y)\nabla u_{0}\cdot \nabla u_{0}
+2 a(y)\nabla_yu_1\cdot\nabla u_0 +a(y)\nabla_y u_1\cdot\nabla_y u_1  \right]\, dydxdt\nonumber\\
&=
\int_0^T\int_{\Omega}\int_{\square}	a(y)(\nabla u_0+\nabla_y u_1)\cdot (\nabla u_0+\nabla_y u_1)\, dydxdt\quad \text{ as }\ \e\to 0_+.	\nonumber
\end{align}
Consequently, with the aid of Lemma \ref{for I_1-2}, \eqref{I2} and \eqref{I3}, we obtain
$$\lim_{\e\to 0_+}(I_1^\e-2I_2^\e+I_3^\e)= 0,$$
which completes the proof.

\subsection{Proof of Corollary \ref{CR2}}
The strategy of the proof of Corollary \ref{CR2} is the same as Theorem \ref{CR} and it suffices to show the following
\begin{lem}
Under the same assumption as in Corollary \ref{CR2}, it holds that
\begin{align*}
\lefteqn{	\limsup_{\e\to 0_+}	\int_0^T \int_{\Omega} 
	a(t,\mx)\nabla u_\e(x,t)\cdot\nabla u_{\e}(x,t)\, dxdt} \\
&\quad \le
\int_0^T\int_{\Omega}\int_\square a(t,y)\bigl(\nabla u_0(x)+\nabla_y u_1(x,t,y)\bigl)
\cdot 
\bigl(\nabla u_0(x)+\nabla_y u_1(x,t,y)\bigl)\, dydxdt.\nonumber
\end{align*} 
\end{lem}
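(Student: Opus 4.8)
The plan is to run the scheme of the proof of Theorem~\ref{CR}. Writing $a_\e=a(t,\mx)$ and using the ellipticity \eqref{ellip},
\[
\lambda\int_0^T\!\int_{\Omega}\bigl|\nabla u_{\e}-\bigl(\nabla u_0+\nabla_yu_1(x,t,\mx)\bigr)\bigr|^2\,dxdt\le I_1^\e-2I_2^\e+I_3^\e,
\]
with $I_1^\e,I_2^\e,I_3^\e$ as in the proof of Theorem~\ref{CR}. The terms $I_2^\e$ and $I_3^\e$ converge to $\int_0^T\!\int_{\Omega}\!\int_{\square}a(t,y)(\nabla u_0+\nabla_yu_1)\cdot(\nabla u_0+\nabla_yu_1)\,dydxdt$ exactly as in that proof: for $I_2^\e$ one uses \eqref{conv5.5} together with the fact that $a(t,y)(\nabla u_0+\nabla_yu_1)\in[L^2(\Omega\times I;C_{\rm per}(\square))]^N$ is an admissible test function (here $a(t,y)$ is smooth and the correctors $\Phi_k(t,\cdot)$ are smooth in $y$ by elliptic regularity for \eqref{CPslow}, resp.\ are $t$-independent when $r=2$, see below), and for $I_3^\e$ the mean-value property (Proposition~\ref{mean}). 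Hence everything reduces to the displayed Lemma, that is, to $\limsup_{\e\to0_+}I_1^\e\le\int_0^T\!\int_{\Omega}\!\int_{\square}a(t,y)(\nabla u_0+\nabla_yu_1)\cdot(\nabla u_0+\nabla_yu_1)\,dydxdt$.

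To prove this, fix $\sigma\in(0,T)$ and split $I_1^\e=\int_0^\sigma\!\int_{\Omega}a_\e\nabla u_\e\cdot\nabla u_\e+\int_\sigma^T\!\int_{\Omega}a_\e\nabla u_\e\cdot\nabla u_\e$. By \eqref{ellip} and Lemma~\ref{bdd}(i) the first piece is $\le\int_0^\sigma\|u_\e(t)\|_{H^1_0(\Omega)}^2\,dt\le C\sigma$ uniformly in $\e$. For the second piece I would test the weak form \eqref{weakform} by $u_\e(t)$ and integrate over $(\sigma,T)$, obtaining
\[
\int_\sigma^T\!\int_{\Omega}a_\e\nabla u_\e\cdot\nabla u_\e\,dxdt=\int_\sigma^T\!\int_{\Omega}f_\e u_\e\,dxdt-\int_\sigma^T\!\int_{\Omega}\bigl(\partial_{tt}^2u_\e+g(\mt)\partial_tu_\e\bigr)u_\e\,dxdt.
\]
The key ingredient is that $\partial_{tt}^2u_\e+g(\mt)\partial_tu_\e=f_\e+\dv(a_\e\nabla u_\e)$ is bounded in $L^2(\Omega\times(\sigma,T))$ for every $\sigma>0$ — the analogue of Lemma~\ref{bdd2} in the time-dependent setting. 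Granting this, it converges weakly in $L^2(\Omega\times(\sigma,T))$ to $w$ (its limit in $L^2(I;H^{-1}(\Omega))$ from \eqref{conv9}); combined with $u_\e\to u_0$ strongly in $C(\overline{I};L^2(\Omega))$ \eqref{conv4} and $f_\e\to f$ (weakly by {\bf (A)}, or strongly by \eqref{f-add}), one gets $\int_\sigma^T\!\int_{\Omega}a_\e\nabla u_\e\cdot\nabla u_\e\,dxdt\to\int_\sigma^T\!\int_{\Omega}(f-w)u_0\,dxdt$.

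Establishing that $L^2(\Omega\times(\sigma,T))$-bound is the main obstacle. Because the hypotheses only provide strong $H^{-1}(\Omega)$-convergence of $-\dv(a(0,\mx)\nabla v_\e^0)$ \eqref{strong hinv} rather than an $L^2$-bound, one cannot test directly by $-\dv(a_\e\nabla\partial_tu_\e)$ as in Lemma~\ref{bdd2}; instead I would insert a temporal weight (test, for instance, by $-\dv(a_\e\nabla(t\partial_tu_\e))$, in the spirit of the derivation of \eqref{t-indepbdd}) so that the contribution at $t=0$ is switched off after dividing by $\sigma$. The extra terms carrying $\partial_ta_\e$ are then absorbed by means of the sign conditions \eqref{ellip2} (resp.\ \eqref{ellip3} for $r=2$), while \eqref{initialv-add} and \eqref{f-add} dispose of the remaining initial and forcing data, yielding boundedness of $\dv(a_\e\nabla u_\e)$ in $L^2(\Omega\times(\sigma,T))$.

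Finally I identify $\int_\sigma^T\!\int_{\Omega}(f-w)u_0\,dxdt$. Since $C_\ast\ne0$, $u_0=u_0(x)$ and, by the identification \eqref{conv-w} of $w$, $w=C_\ast h$; thus \eqref{HDW2} reduces to $-\dv(a_{\rm hom}(t)\nabla u_0)=f-C_\ast h=f-w$, and integrating by parts ($u_0\in H^1_0(\Omega)$) gives $\int_\sigma^T\!\int_{\Omega}(f-w)u_0=\int_\sigma^T\!\int_{\Omega}a_{\rm hom}(t)\nabla u_0\cdot\nabla u_0$. When $r\ne2$, $\Phi_k(t,\cdot)$ solves \eqref{CPslow}, so testing it by $u_1(x,t,\cdot)$ gives $\int_{\square}a(t,y)(\nabla u_0+\nabla_yu_1)\cdot\nabla_yu_1\,dy=0$, whence $\int_{\square}a(t,y)(\nabla u_0+\nabla_yu_1)\cdot(\nabla u_0+\nabla_yu_1)\,dy=a_{\rm hom}(t)\nabla u_0\cdot\nabla u_0$ by \eqref{a_hom}. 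When $r=2$, \eqref{ellip3} forces $a(t,y)=e^{-t}a(0,y)$, so the $t$-independent solution of $-\dv_y[a(0,y)(\nabla_y\Phi_k+e_k)]=0$ solves \eqref{CPcritical}; by uniqueness of \eqref{CPcritical}, $\Phi_k$ — and hence $u_1$ — is independent of $t$, whence $\int_{\square}a(t,y)(\nabla u_0+\nabla_yu_1)\cdot\nabla_yu_1\,dy=e^{-t}\int_{\square}a(0,y)(\nabla u_0+\nabla_yu_1)\cdot\nabla_yu_1\,dy=0$ and the same identity follows. Therefore $\int_\sigma^T\!\int_{\Omega}a_{\rm hom}(t)\nabla u_0\cdot\nabla u_0\,dxdt=\int_\sigma^T\!\int_{\Omega}\!\int_{\square}a(t,y)(\nabla u_0+\nabla_yu_1)\cdot(\nabla u_0+\nabla_yu_1)\,dydxdt\le\int_0^T\!\int_{\Omega}\!\int_{\square}a(t,y)(\nabla u_0+\nabla_yu_1)\cdot(\nabla u_0+\nabla_yu_1)\,dydxdt$, the last inequality because the integrand is $\ge0$ by \eqref{ellip}. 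Collecting the two pieces, $\limsup_{\e\to0_+}I_1^\e\le C\sigma+\int_0^T\!\int_{\Omega}\!\int_{\square}a(t,y)(\nabla u_0+\nabla_yu_1)\cdot(\nabla u_0+\nabla_yu_1)\,dydxdt$; letting $\sigma\to0_+$ yields the Lemma (in fact equality holds).
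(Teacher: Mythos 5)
Your reduction to the behaviour of $I_1^\e$ and your endgame (identifying $\int(f-w)u_0$ with $\int a_{\rm hom}(t)\nabla u_0\cdot\nabla u_0$, and observing that under \eqref{ellip3} the corrector of \eqref{CPcritical} is time-independent, so that $a_{\rm hom}(t)\nabla u_0\cdot\nabla u_0=\int_\square a(t,y)(\nabla u_0+\nabla_y u_1)\cdot(\nabla u_0+\nabla_y u_1)\,dy$ also when $r=2$) are fine, and the last observation is a genuinely nice shortcut. The proof, however, hinges on the claim you yourself flag as ``the main obstacle'': that $\partial_{tt}^2u_\e+g(\tfrac{t}{\e^r})\partial_t u_\e=f_\e+\dv\bigl(a(t,\tfrac{x}{\e})\nabla u_\e\bigr)$ is bounded in $L^2(\Omega\times(\sigma,T))$ under the hypotheses of Corollary \ref{CR2}. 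This claim is not only left unproved (the sketch via testing with $-\dv(a_\e\nabla(t\partial_tu_\e))$ does not address the forcing term, which would require $\partial_t f_\e$ or $\nabla f_\e$, nor the initial layer, where only $H^{-1}$-type control of the data is available); it is in fact \emph{false} in general under these hypotheses. Indeed, if $\dv(a_\e\nabla u_\e)$ were bounded in $L^2(\Omega\times(\sigma,T))$, then along a subsequence it would converge weakly in $L^2$ there, and since $a(t,\tfrac{x}{\e})\nabla u_\e\to a_{\rm hom}(t)\nabla u_0$ weakly in $[L^2(\Omega\times I)]^N$ by \eqref{HPconv3}, the limit would have to be $\dv(a_{\rm hom}(t)\nabla u_0)$, forcing $\dv(a_{\rm hom}(t)\nabla v^0)\in L^2$ (recall $u_0\equiv v^0$ when $C_\ast\neq0$). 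But Corollary \ref{CR2} only assumes the $H^{-1}$-level condition \eqref{strong hinv}: take, e.g., $r=1$, $C_\ast=1$, $a\equiv I$, $g_{\rm per}\equiv 1$, $v_\e^1=0$, $f_\e=f$ smooth, and $v_\e^0=v^0\in H^1_0(\Omega)$ with $\Delta v^0\notin L^2(\Omega)$; all of \eqref{ellip2}--\eqref{f-add} hold, yet the asserted bound cannot. Without that bound, the passage to the limit in $\int_\sigma^T\langle\partial_{tt}^2u_\e+g(\tfrac{t}{\e^r})\partial_tu_\e,u_\e\rangle\,dt$ is a product of a weakly convergent sequence in $L^2(\sigma,T;H^{-1}(\Omega))$ (cf.\ \eqref{conv9}) with a sequence that converges only weakly-$\ast$ in $L^\infty(I;H^1_0(\Omega))$; justifying it is essentially equivalent to the strong $H^1$-type convergence the corrector theorem is meant to establish, so the argument is circular at its core.

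The paper's proof takes a different and assumption-compatible route: it introduces the energy $E^\e(u_\e(t))$, shows via \eqref{weakform} and Ascoli--Arzel\`a that $E^\e(u_\e)\to\xi$ in $C(\overline I)$, identifies $\xi$ using precisely \eqref{strong hinv}, \eqref{initialv-add} and \eqref{f-add} (these hypotheses are tailored to pass to the limit in the energy identity, not to gain $L^2$ regularity of $\dv(a_\e\nabla u_\e)$), and then controls the remaining term $\int_0^T\!\int_0^t\!\int_\Omega\partial_\rho a(\rho,\tfrac{x}{\e})\nabla u_\e\cdot\nabla u_\e$ by the sign condition \eqref{ellip2} together with two-scale lower semicontinuity, before evaluating the resulting limit $\int_0^T J(t)\,dt$ through integration by parts, \eqref{CPcritical} and \eqref{ellip3}. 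To repair your proof you would either have to add $L^2$-level data assumptions (as in Theorem \ref{CR} and {\bf (A)}(iv)), thereby proving a weaker statement than Corollary \ref{CR2}, or replace the key step by an energy/lower-semicontinuity argument of the paper's type.
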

\begin{proof}
Let $a_{\e}=a(t,\mx)$ for simplicity.
Define $E^\e(u_\e(t))$ by
\begin{align*}
E^\e(u_\e(t))
&=\frac{1}{2}\|\partial_\rho u_\e(t)\|_{L^2(\Omega)}^2
+\frac{1}{2}\int_{\Omega} a_\e\nabla u_\e(x,t)\cdot \nabla u_\e(x,t)\, dx\\
&\quad+
\int_0^{t}g(\tfrac{\rho}{\e^r})\|\partial_\rho u_\e(\rho)\|_{L^2(\Omega)}^2\, d\rho
-\frac{1}{2}\int_0^{t}\int_\Omega \partial_\rho a(\rho,\mx)\nabla u_\e(x,\rho)\cdot \nabla u_\e(x,\rho)\, dxd\rho
\end{align*}
for $t\in I$. From (i), (ii) and (iii) of Lemma \ref{bdd} and {\bf (A)}, we have $|E^\e(u_\e(t))|\le C$ for all $t\in \overline{I}$. 
Moreover, we derive by Remark \ref{test reg} that,
for any $t\in I$ and $h\in (0,T-t)$,
\begin{align*}
\lefteqn{|E^\e(u_\e(t+h))-E^\e(u_\e(t))|}\\
&\le
\int_{t}^{t+h}
\Bigl| \langle \partial_{\rho\rho}^2u_{\e}(\rho),\partial_\rho u_\e(\rho)\rangle_{H^1_0(\Omega)}
+ \bigl(a_\e\nabla u_\e(\rho),\nabla\partial_\rho u_{\e}(\rho)\bigl)_{L^2(\Omega)}+g(\tfrac{\rho}{\e^r})\|\partial_\rho u_\e(\rho)\|^2_{L^2(\Omega)}\Bigl|\, d\rho\nonumber\\
&\stackrel{\eqref{weakform}}{\le}
\int_{t}^{t+h}\|f_{\e}(\rho)\|_{L^2(\Omega)}\|\partial_\rho u_\e(\rho)\|_{L^2(\Omega)} \, d\rho
\le
\|f_\e\|_{L^2(\Omega\times I)}\|\partial_\rho u_{\e}\|_{L^{\infty}(\overline{I};L^2(\Omega))}|h|^{1/2}, 
\end{align*}
which along with (ii) of Lemma \ref{bdd} and the boundedness of $(f_\e)$ in $L^2(\Omega\times I)$ yields the equicontinuous of $t\mapsto E^\e(u_\e(t))$ on $\overline{I}$. 
Then Ascoli-Arzel\'a's theorem ensures that there exists $\xi\in C(\overline{I})$ such that
\begin{align}\label{for I_1-1}
E^\e(u_\e)\to \xi\quad\text{ strongly in $C(\overline{I})$}.
\end{align}
Furthermore, it holds that, for any $t\in I$,
\begin{align*}
\lefteqn{E^\e(u_\e(t))}\\
&\quad=
\int_{0}^{t}
\Bigl| \langle \partial_{\rho\rho}^2u_{\e}(\rho),\partial_\rho u_\e(\rho)\rangle_{H^1_0(\Omega)}
+ \bigl(a_\e\nabla u_\e(\rho),\nabla\partial_\rho u_{\e}(\rho)\bigl)_{L^2(\Omega)}+g(\tfrac{\rho}{\e^r})\|\partial_\rho u_\e(\rho)\|^2_{L^2(\Omega)}\Bigl|\, d\rho\\
&\quad+
\frac{1}{2}\|v^1_\e\|_{L^2(\Omega)}^2+\frac{1}{2}\int_{\Omega}a(0,\mx)\nabla v^0_\e(x)\cdot \nabla v^0_\e(x)\, dx\nonumber\\
&\stackrel{\eqref{weakform}}{=}
\int_{0}^{t}\int_{\Omega}f_\e(x,\rho)\partial_{\rho}u_\e(x,\rho)\, dxd\rho
+\frac{1}{2}\|v^1_\e\|_{L^2(\Omega)}^2+\frac{1}{2}\int_{\Omega}a(0,\mx)\nabla v^0_\e(x)\cdot \nabla v^0_\e(x)\, dx.\nonumber
\end{align*}
Then, by \eqref{strong hinv}, \eqref{initialv-add} and \eqref{f-add}, $\xi(t)$ is identified with 
\begin{align*}
\xi(t)
&=
\frac{1}{2}\int_{\Omega}a_{\rm hom}(0)\nabla v^0(x)\cdot\nabla v^0(x)\, dx\\
&=
\frac{1}{2}\int_{\Omega}\int_\square a(0,y)\bigl(\nabla u_0(x)+\nabla_y u_1(x,0,y)\bigl)\cdot\nabla u_0(x)\, dydx.\nonumber
\end{align*}
Hence defining $J(t)$ by
$$
J(t):=\int_0^t\int_\Omega\int_\square \partial_\rho a(\rho,y)\bigl(\nabla u_0(x)+\nabla_y u_1(x,\rho,y)\bigl)\cdot \bigl(\nabla u_0(x)+\nabla_y u_1(x,\rho,y)\bigl)\, dydxd\rho
$$
and noting by \eqref{ellip2} and \cite[Example 1~in Section 7]{Zh} that 
\begin{align*}
\liminf_{\e\to 0_+}\int_0^t\int_\Omega (-\partial_\rho a_\e)\nabla u_\e(x,\rho )\cdot \nabla u_\e(x,\rho)\, dxd\rho \ge -J(t),
\end{align*}
one can derive by \eqref{for I_1-1} that 
\begin{align*}
\lefteqn{\limsup_{\e\to 0_+}\int_0^T\int_{\Omega}a_\e\nabla u_\e(x,t)\cdot \nabla u_\e(x,t)\, dxdt}
\\
&\le
\lim_{\e\to 0_+}\int_0^T 2E^\e(u_\e)(t)\, dt 
+
\limsup_{\e\to 0_+}\int_0^T\int_0^{t}\int_\Omega \partial_\rho a(\rho,\mx)\nabla u_\e(x,\rho)\cdot \nabla u_\e(x,\rho)\, dxd\rho dt\nonumber\\
&\le 
T\int_{\Omega}\int_\square a(0,y)\bigl(\nabla u_0(x)+\nabla_y u_1(x,0,y)\bigl)\cdot\nabla u_0(x)\, dydx+\int_0^TJ(t)\, dt.\nonumber
\end{align*}
We estimate the second term below. Since $u_1(x,\cdot,\cdot)$ is smooth in $\square\times (\eta,T)$ for all $\eta\in I$ due to the smoothness of $a(t,y)$, it holds that
\begin{align*}
&\int_\eta^T
\int_\eta^{t}\int_\Omega\int_\square \partial_\rho a(\rho,y)\bigl(\nabla u_0(x)+\nabla_y u_1(x,\rho,y)\bigl)\cdot \bigl(\nabla u_0(x)+\nabla_yu_1(x,\rho,y)\bigl)\, dydxd\rho dt\\
&=
\int_\eta^T\int_\Omega\int_\square a(t,y)\bigl(\nabla u_0(x)+\nabla_y u_1(x,t,y)\bigl)\cdot \bigl(\nabla u_0(x)+\nabla_y u_1(x,t,y)\bigl)\, dydxdt\\
&\quad -
(T-\eta)\int_\Omega\int_\square a(\eta,y)\bigl(\nabla u_0(x)+\nabla_y u_1(x,\eta,y)\bigl)\cdot 
\bigl(\nabla u_0(x)+\nabla_y u_1(x,\eta,y)\bigl)\, dydx\\
&\quad -
2\int_\eta^T\int_\eta^{t}\int_\Omega\int_\square a(\rho,y)\bigl(\nabla u_0(x)+\nabla_y u_1(x,\rho,y)\bigl)\cdot \nabla_y\partial_\rho u_1(x,\rho,y)\, dydxd\rho dt\\
&=:J_1^\eta+J_2^\eta+J_3^\eta.
\end{align*}
Then one can verify that
\begin{equation*}
J_1^\eta\to 
\int_0^T\int_\Omega\int_\square a(t,y)\bigl(\nabla u_0(x)+\nabla_y u_1(x,t,y)\bigl)\cdot \bigl(\nabla u_0(x)+\nabla_y u_1(x,t,y)\bigl)\, dydxdt
\end{equation*}
and 
\begin{equation}\label{J2est}
J_2^\eta\to -
T\int_\Omega\int_\square a(0,y)\bigl(\nabla u_0(x)+\nabla_y u_1(x,0,y)\bigl)\cdot 
 \nabla u_0(x)\, dydx
\end{equation}
as $\eta\to 0_+$. Here we used the fact \eqref{CPcritical} at $t=0$ in \eqref{J2est}. 
Furthermore, if $r\neq 2$, $J_3^\eta=0$ readily follows due to $u_1=u_1(x,y)$.
On the other hand, if $r=2$, \eqref{CPcritical} and \eqref{ellip3} yield that
\begin{align}
J_3^\eta
&\stackrel{\eqref{ellip3}}{=}
2\int_\eta^T\int_\eta^{t}\int_\Omega\int_\square \partial_\rho a(\rho,y)\bigl(\nabla u_0(x)+\nabla_y u_1(x,\rho,y)\bigl)\cdot \nabla_y\partial_\rho u_1(x,\rho,y)\, dydxd\rho dt\nonumber\\
&\le
2\int_\eta^T\int_\Omega\int_\square a(t,y)\bigl(\nabla u_0(x)+\nabla_y u_1(x,t,y)\bigl)\cdot \nabla_y\partial_\rho u_1(x,t,y)\, dydxdt\nonumber\\
&\quad -
2(T-\eta)\int_\Omega\int_\square a(\eta,y)\bigl(\nabla u_0(x)+\nabla_y u_1(x,\eta,y)\bigl)\cdot \nabla_y\partial_\rho u_1(x,\eta,y)\, dydx\nonumber\\
&\quad -
2\int_\eta^T\int_\eta^{t}\int_\Omega\int_\square a(\rho,y)\bigl(\nabla u_0(x)+\nabla_y u_1(x,\rho,y)\bigl)\cdot \nabla_y\partial_{\rho\rho}^2 u_1(x,\rho,y)\, dydxd\rho dt\nonumber\\
&\stackrel{\eqref{CPcritical}}{=}
-2\int_\eta^T C_\ast t \|\partial_\rho u_1(t)\|_{L^2(\Omega\times \square)}^2\, dt
+
2(T-\eta)C_\ast \eta\|\partial_\rho u_1(\eta)\|_{L^2(\Omega\times \square)}^2\nonumber\\
&\quad +
\int_\eta^T\int_\eta^{t} C_\ast \rho \frac{d}{d\rho}\|\partial_\rho u_1(\rho)\|_{L^2(\Omega\times \square)}^2\, d\rho dt\nonumber\\
&\le
-\int_\eta^T C_\ast t \|\partial_\rho u_1(t)\|_{L^2(\Omega\times \square)}^2\, dt
+
2(T-\eta)C_\ast \eta\|\partial_\rho u_1(\eta)\|_{L^2(\Omega\times \square)}^2
\le 
0 \nonumber
\end{align}
as $\eta\to 0_+$. Hence we conclude that 
\begin{align*}
\int_0^TJ(t)\, dt
&= \lim_{\eta\to 0_+}J_1^\eta+\lim_{\eta\to 0_+}J_2^\eta
+
\lim_{\eta\to 0_+}J_3^\eta
\\
&\le
\int_0^T\int_\Omega\int_\square a(t,y)\bigl(\nabla u_0(x)+\nabla_y u_1(x,t,y)\bigl)\cdot \bigl(\nabla u_0(x)+\nabla_y u_1(x,t,y)\bigl)\, dydxdt\\
&\quad -
T\int_\Omega\int_\square a(0,y)\bigl(\nabla u_0(x)+\nabla_y u_1(x,0,y)\bigl)\cdot 
 \nabla u_0(x)\, dydx,
\end{align*}
which completes the proof.
\end{proof}

\section*{Acknowledgment}
The author is partially supported by Division for Interdisciplinary Advanced Research and Education, Tohoku University and Grant-in-Aid for JSPS Fellows (No.~20J10143). He would like to thank Professor Goro Akagi (Tohoku University) who is his supervisor, for many stimulating discussions.


\begin{thebibliography}{999}

\bibitem{AO} G.~Akagi, T.~Oka,
\newblock{Space-time homogenization for nonlinear diffusion}, 
\newblock{preprint, arXiv:2007.09977} (2020), 1--58. 

\bibitem{Al1} G.~Allaire,
\newblock{Homogenization and two-scale convergence}, 
\newblock{SIAM J.~Math.~Anal}. {\bf 23} (1992), 1482--1518.

\bibitem{Al2} G.~Allaire, M.~Briane,
\newblock{Multiscale convergence and reiterated homogenization}, 
\newblock{Proc.~Roy.~Soc.~Edinburgh Sect.~A} {\bf 126} (1996), 297--342.  

\bibitem{BLP} A.~Bensoussan, J.-L.~Lions, G.~Papanicolaou,
\newblock{\it Asymptotic analysis for periodic structures}, 
\newblock{Studies in Mathematics and Its Applications}, vol.~5, North-Holland, Amsterdam, 1978.

\bibitem{BFM}S.~Brahim-Otsmane, G.A.~Francfort, F.~Murat, 
\newblock{Correctors for the homogenization of the wave and heat equations},
\newblock{J.~Math.~Pures Appl.} (9) {\bf 71} (1992), no.~3, 197--231.

\bibitem{BL} M.~Brassart, M.~Lenczner, 
\newblock{A two-scale model for the periodic homogenization of the wave equation},
\newblock{J. Math. Pures Appl.} (9) {\bf 93} (2010), no.~5, 474--517.

\bibitem{CCMM}
J.~Casado-D\'{i}az, J.~Couce-Calvo, F.~Maestre, J.-D.~Mart\'{i}n-G\'{o}mez,
\newblock{Homogenization and corrector for the wave equation with discontinuous coefficients in time},
\newblock{J. Math. Anal. Appl.} {\bf 379} (2011), no. 2, 664--681.

\bibitem{CG}J.~Casado-D\'{i}az, I.~Gayte,
\newblock{The two-scale convergence method applied to generalized Besicovitch spaces},
\newblock{Proc. R. Soc. Lond. A.} {\bf 458} (2002), 2925--2946.

\bibitem{Co} C.~Corduneanu, 
\newblock{\it Almost periodic oscillations and waves}, 
Springer, New York, 2009.

\bibitem{CD} D.~Cioranescu, P.~Donato, 
\newblock{\it An Introduction to Homogenization}, 
\newblock{Oxford Lecture Series in Mathematics and Its Applications}, vol.~17, Oxford University Press, New York, 1999.

\bibitem{Coo} S.~Cooper,
\newblock{Quasi-periodic two-scale homogenisation and effective spatial dispersion in high-contrast media},
\newblock{Calc. Var. Partial Differential Equations} {\bf 57
} (2018), no. 3, 1--33.

\bibitem{DT} H.~Douanla, E.~Tetsadjio, 
\newblock{Reiterated homogenization of hyperbolic-parabolic equations in domains with tiny holes},
\newblock{Electron.~J.~Differential Equations} {\bf 59} (2017), 1--22.

\bibitem{EP} Y.~Efendiev, A.~Pankov,
\newblock{Homogenization of nonlinear random parabolic operators},
\newblock{Adv.~Differential Equations} {\bf 10} (2005), 1235--1260. 

\bibitem{FF}G.~A.~Francfort, F.~Murat, 
\newblock{Oscillations and energy densities in the wave equation}, 
\newblock{Comm. Partial Differential Equations} {\bf 17} (1992), 1785--1865.

\bibitem{FHO} L.~Flod\'en, A.~Holmbom, M.~Olsson Lindberg, J.~Persson,
\newblock{Two-scale convergence. Some remarks and extensions},
\newblock{Pure Appl.~Math.~Q.} {\bf 9} (2013), 461--486.

\bibitem{FHOP} L.~Flod\'en, A.~Holmbom, M.~Olsson Lindberg, J.~Persson, 
\newblock{Very weak multiscale convergence}, 
\newblock{Appl.~Math.~Lett.} {\bf 23} (2010), 1170--1173.
 
\bibitem{FHOS} L.~Flod\'en, A.~Holmbom, M.~Olsson Lindberg, N.~Svanstedt,
\newblock{
Reiterated homogenization of monotone parabolic problems},
\newblock{Ann.~Univ.~Ferrara Sez.~VII Sci. ~Mat.} {\bf 53} (2007), no.~2, 217--232. 

\bibitem{GR} T.~Gallay, G.~Raugel,
\newblock{Scaling variables and asymptotic expansions in damped wave equations},
\newblock{J.~Differential Equations} {\bf 150} (1998), 42--97.

\bibitem{Ho} A.~Holmbom, 
\newblock{Homogenization of parabolic equations an alternative approach and some corrector-type results}, 
\newblock{Appl.~Math.} {\bf 42} (1997), 321--343. 

\bibitem{J} H.~Jian,
\newblock{On the homogenization of degenerate parabolic equations}, 
\newblock{Acta Math.~Appl.~Sinica} {\bf 16} (2000), 100--110.

\bibitem{JKO} V.V.~Jikov, S.M.~Kozlov, O.A.~Oleinik,
\newblock{\it Homogenization of Differential Operators and Integral Functionals},
\newblock{Springer-Verlag}, Berlin, 1994.

\bibitem{LNW} D.~Lukkassen, G.~Nguetseng, P.~Wall,
\newblock{Two-scale convergence}, 
\newblock{Int.~J.~Pure Appl.~Math.} {\bf 2} (2002), 35--86. 

\bibitem{Mi} S.~Migorski,
\newblock{Homogenization of hyperbolic-parabolic equations in perforated domains},
\newblock{Univ.~Iagel.~Acta Math.} {\bf33} (1996), 59--72.

\bibitem{Ng}G.~Nguetseng,
\newblock{A general convergence result for a functional related to the theory of homogenization}, 
\newblock{SIAM J.~Math.~Anal.} {\bf 20} (1989), 608--623.

\bibitem{Ng2}G.~Nguetseng,
\newblock{Homogenization structures and applications I},
\newblock{Z.~Anal.~Anwend.} {\bf22} (2003), 73--107.

\bibitem{NNS}G.~Nguetseng, H.~Nnang, N.~Svanstedt,
\newblock{Deterministic homogenization of quasilinear damped hyperbolic equations},
\newblock{Acta Math.~Sci.~Ser.~B (Engl. Ed.)} {\bf31} (2011), no.~5, 1823--1850. 

\bibitem{NW} G.~Nguetseng, J.-L.~Woukeng,
\newblock{Deterministic homogenization of parabolic monotone operators with time dependent coefficients},
\newblock{Electron.~J.~Differential Equations.} {\bf 82 }(2004), 1--23.

\bibitem{N1} K.~Nishihara,
\newblock{$L^p$-$L^q$ estimates of solutions to the damped wave equation in $3$-dimensional space and their application},
\newblock{Math.~Z.} {\bf 244} (2003), 631--649.

\bibitem{Nn} H.~Nnang, 
\newblock{Deterministic homogenization of weakly damped nonlinear hyperbolic-parabolic equations},
\newblock{NoDEA Nonlinear Differential Equations Appl.
} {\bf19} (2012), 539--574.

\bibitem{Ti1} C.~Timofte,
\newblock{Homogenization results for a nonlinear wave equation in a perforated domain},
\newblock{Politehn.~Univ.~Bucharest Sci.~Bull.~Ser.~A Appl.~Math.~Phys.} {\bf72} (2010), no.~2, 85--92. 

\bibitem{Ti2} C.~Timofte, 
\newblock{Homogenization results for hyperbolic-parabolic equations},
\newblock{Romanian Rep.~Phys.} {\bf62} (2010), 229--238.

\bibitem{To}H.T.~To, 
\newblock{Homogenization of dynamic laminates}, 
\newblock{J.~Math.~Anal.~Appl.} {\bf 354} (2009), 518--538.

\bibitem{Vi} A.~Visintin,
\newblock{Towards a two-scale calculus},
\newblock{ESAIM Control Optim.~Calc.~Var.} {\bf 12} (2006), 371--397.

\bibitem{W} J.-L.~Woukeng,
\newblock{Periodic homogenization of nonlinear non-monotone parabolic operators with three time scales},
\newblock{Ann.~Mat.~Pura Appl.} {\bf 189} (2010), no.~3, 357--379.

\bibitem{WD} J.L.~Woukeng, D.~Dongo,
\newblock{Multiscale homogenization of nonlinear hyperbolic equations with several time scales},
 \newblock{Acta Math.~Sci.~Ser.~B (Engl.~Ed.)} {\bf 31} (2011), no.~3, 843--856. 

\bibitem{Zh} V.V.~Zhikov,
\newblock{On two-scale convergence}, 
\newblock{J.~Math.~Sci.} {\bf 120} (2004), 1328--1352.

\end{thebibliography}
\end{document}